\documentclass[a4paper,11pt,reqno]{amsart}
\pdfoutput=1
\usepackage{epsfig,url,paralist}
\usepackage{amssymb}
\usepackage[normalem]{ulem}
\usepackage{fullpage,dynkin-diagrams}
\usepackage{amsmath}
\usepackage{tikz}
\usepackage{relsize}
\usepackage{fullpage}
\usepackage{hyperref,verbatim}
\usepackage[capitalize]{cleveref}
\usepackage{graphicx}
\usepackage{setspace,multirow}
\usepackage{enumitem,lineno}
\setlist{nolistsep}
\usepackage{lscape}
\usepackage{calrsfs}            
\usepackage{color}
\usepackage{colortbl}
\usepackage{xcolor}
\usepackage{wrapfig}
\usepackage[font=small]{caption}
\DeclareCaptionLabelFormat{tight}{#1~#2}
\usetikzlibrary{calc, intersections}

\numberwithin{equation}{section}
\newtheorem{theorem}{Theorem}[section]

\newtheorem{theorem*}{Main Result}
\newtheorem{lem}[theorem]{Lemma}
\newtheorem{coro}[theorem]{Corollary}
\newtheorem{cor*}[theorem*]{Corollary}
\newtheorem{prop}[theorem]{Proposition}
\theoremstyle{definition}
\newtheorem{defn}[theorem]{Definition}

\newtheorem{nota}[theorem]{Notation}

\newtheorem{exm}[theorem]{Example}
\newtheorem{observation}[theorem]{Observation}
\newtheorem{fact}[theorem]{Fact}
\numberwithin{theorem}{section}
\usepackage{fancyhdr}
\def\<{\langle}
\def\>{\rangle}
\newcommand{\Res}{\mathsf{Res}}

\newcommand{\per}{\text{\,\footnotesize$\overline{\land}$\,}}
\newcommand{\proj}{\mathsf{proj}}

\newcommand{\Z}{\mathbb{Z}}

\newcommand{\cP}{\mathcal{P}}

\newcommand{\cL}{\mathcal{L}}

\begin{document}

\title{On the Moufang condition for polar spaces}

\author[Sira Busch]{Sira Busch}

\subjclass{20E42}

\keywords{Polar Spaces, Moufang Condition, Root Elations, Root Groups}

\begin{abstract}
We provide direct constructions of nontrivial root elations of thick generalized quadrangles embedded in thick polar spaces of higher finite rank 
and show that all of these can be expressed as self-projectivities of length~$4$. 
This reveals a connection between the little projective groups and the special projectivity groups of thick polar spaces of finite rank explicitly. 
We also prove that the constructed root elations extend to the ambient polar space. 
Furthermore, we introduce point elations and line elations and use them to give an equivalent definition of the Moufang property for thick polar spaces, 
without using the notion of apartments. 
This, in turn, allows us to define the Moufang property for thick polar spaces of infinite rank. 
Consequently, we show that all thick polar spaces of finite and infinite rank are Moufang.
\end{abstract}

\maketitle

\begin{center}
\begin{minipage}[c]{0.833\textwidth}
\begin{small}
\textbf{Author:} Sira Busch \\
\textbf{ORCID:} 0009-0009-0939-6543
\end{small}
\end{minipage}
\end{center}


\section{Introduction}

In his lecture notes from 1974, Jacques Tits noticed that all thick irreducible spherical buildings 
of rank at least three satisfy the Moufang condition, and then provided a proof in \cite{Tits77}. 
This includes all finite-dimensional projective spaces and all polar spaces of finite rank.

The Moufang property is a well-known concept in the theory of spherical buildings, incidence geometry, and projective geometry.
It ensures that a given geometry has a rich automorphism group.
Furthermore, it ensures that, as soon as the rank of the geometry is at least three, the subgroup generated by all root elations is a simple group.
Research on this property goes back to the early 1930s, when Ruth Moufang studied different projective planes and their properties.
In 1933, Ruth Moufang discovered a projective plane defined over the octonions (now known as the \emph{Cayley plane}) 
in which Desargues's theorem does not hold, but instead a weaker version of said theorem; the \emph{little Desargues theorem}.
Consequently, projective planes satisfying this weaker condition were called \emph{Moufang planes},
and Jacques Tits named the Moufang property after Ruth Moufang.

Jacques Tits's unified and elegant approach was completely new, and his results laid the foundation for classifying all thick,
irreducible spherical buildings of rank at least three—and hence all finite-dimensional projective spaces of dimension at least three
and all polar spaces of finite rank at least three.

In his proof, Tits made use of another one of his theorems, known as the \emph{extension theorem} (\cite[Theorem~4.1.2]{SphericalTypeFiniteBNPairs}).
The extension theorem is considered rather technical (cf.~\cite[p.~7]{SphericalTypeFiniteBNPairs}), and therefore
there have been attempts to find alternative proofs of Tits’s results that avoid using it.
Moving away from the generality and focusing more on the specific geometries again might illuminate why Tits’s famous result holds.

In this article, we focus on thick polar spaces and first present direct geometric constructions
for root elations of generalized quadrangles embedded in thick polar spaces of higher finite rank.
These constructions imply the Moufang property for all such quadrangles.
We then show that these elations extend to the ambient polar space, which again implies the Moufang property for the entire space.
Thus, we present a new proof of Tits’s result for thick polar spaces.

Furthermore, we give an equivalent way to define the Moufang condition for thick polar spaces
without using the notion of apartments.
This enables us to describe what the Moufang property looks like for polar spaces of infinite rank.
We then proceed to prove the Moufang property for polar spaces of infinite rank, which has not been done before.

A corollary of our constructions for generalized quadrangles embedded in finite higher-rank polar spaces
is that every root elation can be expressed as a self-projectivity of length~$4$.
Groups of projectivities are another well-known concept in projective and incidence geometry.
Our corollary, together with \cite[Proposition~2.3]{Knarr}, motivated the proof of \cite[Theorem~A]{ProjGrSimpLaced},
which gives a more general statement about the connection between the little projective group of a thick irreducible spherical building $\Delta$ and the special projectivity group of $\Delta$.
Even though we will not deal directly with buildings in this article, we note that our results here, together with \cite{Busch:NonExistenceH3H4}
and the known constructions for buildings of type $\mathsf{A_3}=\mathsf{D_3}$, provide alternative, elementary proofs of Tits's results
on thick, irreducible spherical buildings of rank~$3$, without using Tits’s extension theorem.

It is worth mentioning that a simplified version of Tits's extension theorem exists for polar spaces (cf.~\cite[Theorem~8.5.5]{DiagramGeometry}). 
However, this does not give us a direct construction for root elations and it assumes that we already know that a certain isomorphism exists (cf.~\cite[p.~400,~l.~2]{DiagramGeometry}).

\section{Preliminaries}

We start with the most basic and essential definitions.

\begin{defn}
A \textit{point-line geometry} is a pair $\Delta=(\cP,\cL)$, where $\cP$ is a set and $\cL$ is a set of subsets of $\cP$. The elements of $\cP$ are called \textit{points}, and the elements of $\cL$ are called \textit{lines}. If $p \in \cP$ and $L \in \cL$ with $p\in L$, we say that the point $p$ \emph{lies on} the line $L$, and the line $L$ \emph{contains} the point $p$, or \emph{goes through} $p$. If two points $p$ and $q$ lie on a common line, they are called \textit{collinear}, denoted $p \perp q$. 
For any point $p$ and any subset $P \subseteq \cP$, we define \[p^\perp := \{q \in \cP\mid q \perp p\} \text{ and } P^\perp := \bigcap_{p \in P} p^\perp.\]
A \textit{partial linear space} is a point-line geometry in which every line contains at least three points and where there is a unique line through every pair of distinct collinear points $p$ and $q$, which is then denoted by $pq$. 
\end{defn}
		
\begin{exm}\label{example}
Let $V$ be a vector space of dimension at least $3$. 
Let $\cP$ be the set of $1$-dimensional subspaces of $V$, and let $\cL$ be the set of those subsets of $\cP$ that are contained in a given $2$-dimensional subspace of $V$.
Then $(\cP,\cL)$ is called a \emph{projective space (of dimension $\dim V-1$)} (cf.\ \cite{DiagramGeometry}, \cite{PointsAndLines}).  
\end{exm}

\begin{defn}
Let $\Delta = (\cP,\cL)$ be a partial linear space.  
\begin{enumerate}[label=(\roman*)]

\item A subset $S$ of $\cP$ is called a \textit{subspace} of $\Delta$ if every line $L\in\cL$ that contains at least two points of $S$ is contained in $S$. 
A subspace that intersects every line in at least one point is called a \textit{hyperplane}. 
A hyperplane is called \emph{proper} if it does not consist of the whole point set. 
We usually regard subspaces of $\Delta$ as subgeometries of $\Delta$ in the canonical way.

\item A subspace $S$ in which all points are collinear, or equivalently, for which $S \subseteq S^\perp$, is called a \textit{singular subspace}. 
If, moreover, $S$ is not contained in any other singular subspace, it is called a \textit{maximal singular subspace}. 
A singular subspace is called \textit{projective} if, as a subgeometry, it is a projective space. 
				
\item For a subset $P$ of $\cP$, the \textit{subspace generated by $P$} is denoted $\<P\>_\Delta = \<P\>$ and is defined to be the intersection of all subspaces containing $P$. 
A subspace generated by three mutually collinear points not on a common line is called a \textit{plane}. 
(Note that, in general, this is not necessarily a singular subspace. However, in the cases we will deal with, subspaces generated by pairwise collinear points are singular; in particular, planes will be singular subspaces.) 
\end{enumerate}
\end{defn}

\subsection{Polar spaces}

For polar spaces, we will take the viewpoint of Buekenhout--Shult \cite{FoundationsPolar}, and note that all results in this section are well known and were gathered by Hendrik Van Maldeghem in the book \emph{Polar Spaces} \cite{MonographPolarSpaces}. 

\begin{defn}
A \textit{possibly degenerate polar space} $\Delta$ is a point-line geometry in which every line contains at least three points, and for every point $x$, the set $x^\perp$ is a geometric hyperplane. 
If there exists a point in $\Delta$ collinear to all other points, then we call $\Delta$ \emph{degenerate}.
If, for every point $x$, the set $x^\perp$ is a proper geometric hyperplane, we call $\Delta$ a \emph{(non-degenerate) polar space}. 
If we just talk of a \textit{polar space} in the following, we mean a non-degenerate polar space.
\end{defn}	

A polar space is of \emph{finite rank} if its maximal singular subspaces are generated by a finite number of points. 
The minimal such number, if it exists, is called the \emph{rank of the polar space}, and we usually denote it by $n$, where $n$ is always a natural number.
A \emph{submaximal} singular subspace is a hyperplane of a maximal singular subspace. 
In a polar space $\Delta$, all singular subspaces are either empty, points, lines, or projective spaces (which are of finite dimension if $\Delta$ is of finite rank; cf. \cite[Theorem~1.1]{Pasini:09}, \cite[Theorem~7.3.6 \& Lemma~7.3.8]{PointsAndLines}). 
Consequently, a polar space is a partial linear space. 

A polar space of finite rank is called \emph{top-thin} if every submaximal singular subspace is contained in exactly two maximal singular subspaces. 
A polar space of finite rank is either top-thin, or each submaximal singular subspace is contained in at least three maximal singular subspaces (cf. \cite[Theorem~1.7.1]{MonographPolarSpaces}). 
In the latter case, the polar space is called \emph{thick}. 

In this article, we assume all polar spaces of finite rank to be thick.

For polar spaces, if two points are not contained in a common line, we say that they are \textit{opposite} and write $p \equiv q$. 

Among the polar spaces of finite rank, the ones that have rank $2$ play a special role; they are \emph{generalized quadrangles} (cf. \cite[p.~83]{MonographPolarSpaces}).

\begin{defn}\label{genquaddefn} A \emph{generalized quadrangle} is a partial linear space $\Gamma = (\cP, \cL)$ such that the following axioms are satisfied (cf. \cite[Definition~1.3.1]{GeneralizedPolygons}).
\begin{enumerate}[label=(\roman*)]
\item $\Gamma$ contains no ordinary $k$-gon as a subgeometry for $2 \leq k < 4$.
\item Any two elements of $\cP \cup \cL$ are contained in some ordinary quadrangle in $\Gamma$, and these are also called the \emph{apartments of $\Gamma$}.
\item There exists an ordinary pentagon as a subgeometry in $\Gamma$.
\end{enumerate}
\end{defn}

We can also define \emph{apartments} for polar spaces of higher finite rank and show an analog of \ref{genquaddefn}(ii) in this case.

\begin{lem}
Let $\Delta$ be a polar space of rank $n$. Then we can find $2n$ points $p_{-n},p_{-n+1},\ldots,p_{-1},$ $p_1,p_2,\ldots,p_n$ such that $p_i\perp p_j$ if and only if $i+j\neq 0$, for all $i,j\in\{-n,-n+1,\ldots,-1,1,\ldots,n\}$. 
\end{lem}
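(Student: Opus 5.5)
We will prove the statement by induction on $n$, using the standard facts about polar spaces collected in \cite{MonographPolarSpaces}. The main tools are the following.
\begin{enumerate}[label=(\alph*)]
\item The one-or-all axiom: for a point $x$ and a line $L$, either $x$ is collinear with exactly one point of $L$ or with all of them. This follows from $x^\perp$ being a hyperplane together with the subspace property.
\item For a singular subspace $S$, the set $S^\perp$ is a subspace.
\item For opposite points $p \equiv q$, the residue-like geometry $\{p,q\}^\perp$ (with the lines it contains) is a polar space of rank $n-1$, provided $n \geq 2$. It is non-degenerate, and it is thick if $\Delta$ is.
\end{enumerate}

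\textbf{Base case $n=1$.} A rank-one polar space has no lines, and non-degeneracy gives two distinct points that are not collinear. We set $p_{-1}=p$ and $p_1=q$.

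\textbf{Opposite pair in rank $n\ge 2$.} Non-degeneracy gives, for any point $p$, a point not in $p^\perp$, so an opposite pair $p_{-n}\equiv p_n$ exists.

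\textbf{Inductive step.} Let $R=\{p_{-n},p_n\}^\perp$. We need that $R$ is a polar space of rank $n-1$.
\begin{itemize}
\item \emph{$R$ is a subspace and every line meets $x^\perp\cap R$.} $R$ is an intersection of hyperplanes, hence a subspace. For $x\in R$, the set $x^\perp\cap R$ meets every line of $R$ by (a).
\item \emph{Non-degeneracy.} Suppose $x\in R$ were collinear with all of $R$. Take a line $L$ through $p_n$ and any point $y\in L\setminus\{p_n\}$. By (a), $p_{-n}$ is collinear with a unique point $y'$ of $L$, and $y'\ne p_n$. Then $p_n^\perp=\langle p_n,R\rangle$, which would force $x^\perp\supseteq p_n^\perp\cup p_{-n}^\perp$. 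Also $p_n^\perp \cup p_{-n}^\perp$ is not contained in a proper hyperplane, since every point $z$ lies on a line meeting both. This contradicts $x^\perp$ being proper.
\item \emph{Rank.} Every maximal singular subspace $M$ through $p_n$ meets $p_{-n}^\perp$ in a hyperplane of $M$ not containing $p_n$. So $M\cap R$ is a maximal singular subspace of $R$ of projective dimension one less. This gives rank $n-1$.
\end{itemize}
By induction we find $p_{\pm1},\dots,p_{\pm(n-1)}$ in $R$ with the required collinearity pattern. All of these are collinear with both $p_{\pm n}$, and $p_n\not\perp p_{-n}$, which completes the pattern for all $i,j$.

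\textbf{Main obstacle.} The delicate step is the rank and non-degeneracy of $R=\{p,q\}^\perp$. In particular, one needs that $p^\perp$ is generated by $p$ and $R$: every point $z\in p^\perp\setminus\{p\}$ lies on the line $pz$, and this line meets $q^\perp$ in exactly one point, which lies in $R$. I would first try this line-projection argument. If needed, I would instead quote \cite[Chapter~1]{MonographPolarSpaces}, where it is shown that $\{p,q\}^\perp$ is a polar space of rank $n-1$.
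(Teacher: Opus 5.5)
Your strategy (split off an opposite pair $p_{\pm n}$ and recurse into $R=\{p_n,p_{-n}\}^\perp$) is sound, and it is complete if you simply quote (c). The paper gives no argument of its own; it only cites Construction~1.5.3 of \cite{MonographPolarSpaces}. The genuine gap is in your self-contained proof that $R$ is non-degenerate. Everything up to $x^\perp\supseteq p_n^\perp\cup p_{-n}^\perp$ is correct, but the last step fails. A line through $z$ forces $z$ into a subspace containing $p_n^\perp\cup p_{-n}^\perp$ only if it meets these two sets in \emph{distinct} points, and for some $z$ no line does.

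For instance, take the symplectic polar space $W(2n-1,\mathbb{K})$. Let $z\neq p_{\pm n}$ be a point on the projective line of the ambient projective space spanned by $p_n$ and $p_{-n}$; this is not a line of the polar space. The pairs $z,p_n$ and $p_n,p_{-n}$ span the same $2$-dimensional vector subspace, so $z^\perp\cap p_n^\perp=R=z^\perp\cap p_{-n}^\perp$. Hence every line through $z$ meets $p_n^\perp$ and $p_{-n}^\perp$ in one and the same point of $R$.

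What you actually need is that $p_n^\perp\not\subseteq x^\perp$ for a point $x\neq p_n$ collinear with $p_n$, i.e.\ that $\Res(p_n)$ is non-degenerate. This does not follow from the one-or-all axiom by closing under lines. The same issue sits in the case $n=2$, where you need two non-collinear points in $R$. The lemma on $p^\perp\cap b^\perp$ quoted in the paper assumes $\cL_{p,b}\neq\emptyset$, so it does not cover that case.

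A correct argument uses the projection facts listed in the preliminaries. Given $x\in R$, choose a maximal singular subspace $M\supseteq p_nx$ and an opposite one $M'$. Then $H:=p_n^\perp\cap M'$ is a hyperplane of $M'$, and $H\not\subseteq x^\perp$. Otherwise $\proj_M(H)$ would contain the line $p_nx$. But the dimension formula, together with $\proj_H(M)\subseteq\proj_{M'}(M)=\emptyset$, shows that $\proj_M(H)$ is a single point. Now take $y\in H\setminus x^\perp$. The point of $p_ny$ collinear with $p_{-n}$ lies in $R$. It is not collinear with $x$, because $p_n$ is the only point of $p_ny$ collinear with $x$.

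Once you use opposite maximal singular subspaces, the induction is no longer needed. Take a basis $p_1,\dots,p_n$ of $M$ and let $p_{-i}:=\proj_{M'}\langle p_j : j\neq i\rangle$, which is a point by the same dimension formula. Then $p_{-i}\perp p_j$ for $j\neq i$, and the points $p_{-1},\dots,p_{-n}$ all lie in $M'$. Moreover $p_{-i}\not\perp p_i$, since otherwise $p_{-i}\in\proj_{M'}(M)=\emptyset$.

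Finally, there is a small omission in your rank step. You should also note that every maximal singular subspace $S$ of $R$ equals $M\cap R$ for some maximal singular subspace $M\supseteq\langle p_n,S\rangle$, so that all of them have dimension $n-2$.
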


\begin{proof}
This is \cite[Construction 1.5.3]{MonographPolarSpaces}. 
\end{proof}

\begin{defn}
The set $\{p_{-n},p_{-n+1},\ldots,p_{-1},p_1,p_2,\ldots,p_n\}$ is called a \emph{polar frame}. An \emph{apartment} of a polar space $\Delta$ is the set of all singular subspaces spanned by the points of a polar frame. Hence, the apartments of a generalized quadrangle are regular quadrangles, the apartments of a polar space of rank $3$ are regular octahedra, and the apartments for polar spaces of higher rank are hyperoctahedra. 
\end{defn}

\begin{prop}
Any two singular subspaces are contained in a common apartment.
\end{prop}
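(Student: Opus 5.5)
We prove that for two singular subspaces $U$ and $W$ of a polar space $\Delta$ of rank $n$ there is a polar frame $\{p_{\pm 1},\ldots,p_{\pm n}\}$ such that both $U$ and $W$ are spanned by subsets of the frame. Since every singular subspace lies in a maximal one, we first enlarge $U$ and $W$ to maximal singular subspaces $M\supseteq U$ and $M'\supseteq W$. Any frame adapted to $U,W$ inside $M,M'$ (in the sense below) will do. We therefore build the frame inductively, keeping track of the flags $U\subseteq M$ and $W\subseteq M'$. The standard tools are the facts from \cite{MonographPolarSpaces}: for a singular subspace $S$, the set $S^\perp$ induces a polar space, and the residue $S^\perp/S$ is a non-degenerate polar space of rank $n-\dim S-1$. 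We also use the one-or-all axiom: for a point $x$ and a line $L$, the point $x$ is collinear with one or all points of $L$. Consequently, for a point $x$ and a maximal singular subspace $M$ with $x\notin M$, the set $x^\perp\cap M$ is a hyperplane of $M$.

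The induction on $n$ splits according to whether $M\cap M'$ is empty. If $M\cap M'\neq\emptyset$, we pick a point $p$ of $M\cap M'$. We choose $p$ in $U\cap W$ if that is nonempty; otherwise in $U\cap M'$ or $M\cap W$ if possible; otherwise anywhere in $M\cap M'$. Passing to the residue $p^\perp/p$, which has rank $n-1$, the subspaces $\<U,p\>/p$ and $\<W,p\>/p$ are still singular, contained in the maximal ones $M/p$ and $M'/p$. By induction they lie in a common frame of the residue. We lift its $2(n-1)$ points to points of $p^\perp$ that keep the frame collinearities and are chosen inside $U$, $W$, $M$, $M'$ whenever the residue point lies there. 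Finally we add $p$ and a point $p_{-}$ opposite $p$ that is collinear with all lifted points. Such a $p_-$ exists: the lifts span a subspace $\<\text{lifts}\>$ whose perp is a rank-$1$ non-degenerate polar space containing $p$, so it has a point not collinear with $p$.

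If $M\cap M'=\emptyset$, then $M$ and $M'$ are opposite. Each point $x\in M$ then has $x^\perp\cap M'$ a hyperplane of $M'$, and this gives a duality between $M$ and $M'$. We choose a basis $p_1,\ldots,p_n$ of $M$ containing a basis of $U$ and compatible with $U\cap (W^\perp\cap M)$. We then take $p_{-i}$ to be the unique point of $M'$ in $\bigcap_{j\neq i}p_j^\perp\cap M'$. The subspace $W\subseteq M'$ must also be spanned by such points. This requires choosing the basis of $M$ adapted simultaneously to $U$ and to the image of $W$ under the duality, i.e.\ to $W^\perp\cap M$. This is the linear-algebra fact that two subspaces of a projective space admit a common adapted basis.

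The main obstacle is the bookkeeping in the lifting step of the first case. We must ensure that lifted points can be taken inside $U$ or $W$ whenever required, even though $U$ and $W$ need not contain $p$. This is why the choice of $p$ is prioritized as above, and why the case $U\cap W=\emptyset$ in general reduces to arguments like the opposite case applied within $M$ and $M'$. I would handle it by first settling the two-maximal-subspaces statement with adapted flags, then deducing the general one.
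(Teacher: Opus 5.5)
The paper gives no argument of its own here; it just cites \cite[Proposition~1.6.10]{MonographPolarSpaces}. Your self-contained induction is a different and viable route: enlarge $U,W$ to maximal $M,M'$; if $M\cap M'=\emptyset$, use the duality $S\mapsto S^\perp\cap M'$ and a basis of $M$ adapted to both $U$ and $W^\perp\cap M$; otherwise pass to the residue of a point $p\in M\cap M'$. Your treatment of the opposite case is correct. The gap is that the step you yourself call the main obstacle, the lifting in the first case, is left unresolved. The remedies you gesture at (a detour through adapted flags, or ``arguments like the opposite case'' when $U\cap W=\emptyset$) are neither carried out nor needed. Concretely, you must show that no frame line $L$ of the residue is forced to receive two different lifts. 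That happens exactly when $L\subseteq\langle U,p\rangle\cap\langle W,p\rangle$ with $p\notin U\cup W$: then $U\cap W=\emptyset$ by your priority rule, so $L\cap U\neq L\cap W$.

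Your priority rule does exclude this, and this should be said. If $p\notin U$, then $U$ is a hyperplane of $\langle U,p\rangle$. So each residue frame line $L_i\subseteq\langle U,p\rangle$ meets $U$ in a unique point, and by a dimension count these points span $U$. If $p\in U$, any lifts work, since $U=\langle p,q_i : i\in I\rangle$. Hence conflicts can only arise when $p\notin U$ and $p\notin W$, which under your rule forces $U\cap M'=\emptyset$. In that case $\langle U,p\rangle\cap M'$ is a subspace of $\langle U,p\rangle$ that meets the hyperplane $U$ trivially, so it equals $\{p\}$. Since $\langle W,p\rangle\subseteq M'$, the residue subspaces $\langle U,p\rangle/p$ and $\langle W,p\rangle/p$ are disjoint, and no frame line lies in both.

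Any lifts $q_i\in L_i\setminus\{p\}$ automatically realize the frame pattern. Lines in a common singular plane give collinear points, and if $q_i\perp q_{-i}$ then $\langle p,q_i,q_{-i}\rangle$ would be a singular plane containing $L_i$ and $L_{-i}$. So there is no need to carry $M,M'$ into the induction hypothesis; the plain statement for the rank-$(n-1)$ residue suffices.

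Finally, the ``rank-$1$ non-degenerate polar space'' justification for $p_-$ should be made precise. The set $\{q_{\pm1},\dots,q_{\pm(n-1)}\}^\perp$ arises from $n-1$ successive perps of opposite pairs, each lowering the rank by one. In the last step this is $\{x,y\}^\perp$ for opposite $x,y$ in a generalized quadrangle. So it is a set of at least two pairwise non-collinear points containing $p$. You should also anchor the induction, either at a formal rank $1$ or by treating rank $2$ directly.
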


\begin{proof}
This is \cite[Proposition 1.6.10]{MonographPolarSpaces}.
\end{proof}


\subsection{Projectivities} 

For this entire subsection, let $\Delta$ be a polar space of finite rank $n \geq 2$.

\begin{lem} 
If $p$ and $b$ are two opposite points in $\Delta$, then $p^{\perp} \cap b^{\perp}$ is a subspace of $\Delta$. 
If we denote by $\cL_{p,b}$ the set of lines completely contained in $p^{\perp} \cap b^{\perp}$, then $\Delta_{p,b} = (p^{\perp} \cap b^{\perp}, \cL_{p,b})$ is a polar space as soon as $\cL_{p,b}$ is non-empty.
\end{lem}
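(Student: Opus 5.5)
The plan is to verify the two claims directly from the Buekenhout--Shult axiom: for every point $x$, the set $x^\perp$ is a geometric hyperplane. Here $x^\perp$ contains $x$ itself, since $x$ lies on a line and is thus collinear with itself.

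\textbf{Subspace.} First, each $x^\perp$ is a subspace. Let a line $L$ contain two points $q,r\in x^\perp$. If $x\in L$, then $L\subseteq x^\perp$ trivially. Otherwise, since $x^\perp$ is a hyperplane it meets every line, so either $L\subseteq x^\perp$ or $L\cap x^\perp$ is a single point. The latter is excluded because $L$ already contains the two points $q,r\in x^\perp$. Hence $L\subseteq x^\perp$. An intersection of subspaces is again a subspace, so $p^\perp\cap b^\perp$ is a subspace. Note that the opposition of $p$ and $b$ is not needed for this part.

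\textbf{Polar space, lines and thickness.} Next, assume $\cL_{p,b}\neq\emptyset$ and check the axioms for $\Delta_{p,b}$. Every line of $\cL_{p,b}$ is a line of $\Delta$, so it has at least three points. For $x\in p^\perp\cap b^\perp$, the set $x^\perp\cap p^\perp\cap b^\perp$ must be a geometric hyperplane of $\Delta_{p,b}$. Take a line $L\in\cL_{p,b}$. In $\Delta$, the line $L$ meets $x^\perp$, and that intersection point lies in $L\subseteq p^\perp\cap b^\perp$. So $x^\perp\cap p^\perp\cap b^\perp$ meets every line of $\cL_{p,b}$. It is also a subspace of $\Delta_{p,b}$, being an intersection of subspaces. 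This shows that $\Delta_{p,b}$ is a possibly degenerate polar space.

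\textbf{Non-degeneracy.} This is the main obstacle, and it is where the opposition $p\equiv b$ enters. Suppose some $z\in p^\perp\cap b^\perp$ were collinear with every point of $p^\perp\cap b^\perp$. I would show that then $z$ is collinear with every point of $\Delta$, contradicting the non-degeneracy of $\Delta$. I would use the standard decomposition: for any point $y$, the line $py$ meets $b^\perp$ (when $y\notin p^\perp$; otherwise one uses a similar argument with $b$ and $p$ interchanged). This lets one write points in terms of $p$, $b$ and points of $p^\perp\cap b^\perp$.

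Concretely, the first reduction is to show $z^\perp\supseteq p^\perp$. Take $y\in p^\perp$ with $y\neq p$. The line $py$ meets $b^\perp$ in a unique point $y'$, and $y'\neq p$ since $p\notin b^\perp$. Then $y'\in p^\perp\cap b^\perp$, so $z\perp y'$. Also $z\perp p$, so the subspace $z^\perp$ contains two points of the line $py$, hence the whole line, and in particular $y$.

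Symmetrically, $z^\perp\supseteq b^\perp$. Now take an arbitrary point $w$. The hyperplane $p^\perp$ meets every line through $w$. Choose a line through $w$ not in $p^\perp$, or treat $w\in p^\perp$ as already done. Such a line meets $p^\perp$ in one point; I would try to find a line through $w$ meeting $p^\perp$ and $b^\perp$ in distinct points. Both of those points lie in $z^\perp$, so the line lies in $z^\perp$, giving $w\in z^\perp$.

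Finding that line is the delicate case analysis. If every line through $w$ meets $p^\perp$ and $b^\perp$ in the same point, then $w$'s lines all pass through $p^\perp\cap b^\perp$. One can then use the fact that lines contain at least three points to pick another point $w_0$ on such a line, with $w_0\in z^\perp$ by the first case. Then $w$ lies on a line through two points of $z^\perp$. I expect this bookkeeping to be the only real work; alternatively, I would cite the corresponding statement in \cite{MonographPolarSpaces}.
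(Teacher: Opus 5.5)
The first two parts are fine apart from one point noted at the end. The non-degeneracy argument, however, has a genuine gap, exactly where you defer to ``bookkeeping''.

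Your first reduction is correct. It gives $p^\perp\subseteq z^\perp$ with $z\neq p$ (as $z\in b^\perp$ while $p\notin b^\perp$). The problem is the next step, showing $z^\perp=\cP$. The bad case is when every line through $w$ meets $p^\perp$ and $b^\perp$ in the same point, i.e.\ $w^\perp\cap p^\perp=w^\perp\cap b^\perp$. This case really occurs, e.g.\ in a symplectic polar space for $w\neq p,b$ on the hyperbolic line spanned by $p$ and $b$. Your rescue takes $w_0\neq w,m$ on a line $M$ through $w$, with $m$ the point of $M$ in $p^\perp\cap b^\perp$, and asserts that $w_0$ is in the first case. Nothing you have shown prevents $w_0$ from being bad as well. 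If you try to exclude this, you get the following: for each line $K$ through $p$, the points $K\cap w^\perp$ and $K\cap w_0^\perp$ both lie in $b^\perp$. Hence they coincide (otherwise $K\subseteq b^\perp$ and $p\perp b$), so they are collinear with all of $M$, in particular with $m$. Thus $p^\perp\subseteq m^\perp$ with $m\neq p$. That is the same kind of statement you set out to refute, so the case analysis is circular.

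The missing ingredient is this fact: in a non-degenerate polar space of finite rank, $p^\perp\subseteq z^\perp$ forces $z=p$. Once you have it, your first reduction already finishes the proof and the second half is unnecessary. It follows quickly from the preliminaries.
\begin{itemize}
\item If $p^\perp\subseteq z^\perp$, every maximal singular subspace through $p$ contains $z$, since adjoining $z$ keeps it singular.
\item Take such an $M$ and a maximal singular subspace $M'$ opposite $M$, so $M\cap M'=\emptyset$. Then $H'=p^\perp\cap M'$ is a hyperplane of $M'$; it is not all of $M'$ because $p\in M$ and $M,M'$ are opposite.
\item So $\langle p,H'\rangle$ is a maximal singular subspace through $p$. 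It meets $M$ only in $p$: a further common point $v$ would make $pv\cap H'$ a point of $M\cap M'$.
\item But $\langle p,H'\rangle$ must contain $z\in M\setminus\{p\}$, a contradiction.
\end{itemize}

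Two smaller points. The remark ``the line $py$ meets $b^\perp$ (when $y\notin p^\perp$\dots)'' has the condition backwards: $py$ is a line only for $y\in p^\perp$. In the second part you should state explicitly that collinearity in $\Delta_{p,b}$ agrees with collinearity in $\Delta$. This holds because $p^\perp\cap b^\perp$ is a subspace, so the $\Delta$-line through two collinear points of it lies in $\cL_{p,b}$. For comparison, the paper gives no argument here and simply cites \cite[Lemma~2.3.1]{MonographPolarSpaces}.
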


\begin{proof}
This is \cite[Lemma~2.3.1]{MonographPolarSpaces}.
\end{proof}

\begin{prop}\label{proposition2.10} 
If $U$ is a singular subspace of $\Delta$, then $p^{\perp} \cap U$ is either equal to $U$ or a hyperplane of $U$. 
In the latter case, or if $p \not\in U$, we have $\dim(\< p, p^{\perp} \cap U\>) = \dim(p^{\perp} \cap U)+1$.
\end{prop}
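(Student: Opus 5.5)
The plan is to use the defining property of a polar space, namely that $p^\perp$ is a geometric hyperplane, together with the fact that singular subspaces are projective spaces.

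\textbf{First claim.} Since $p^\perp$ is a subspace of $\Delta$, the intersection $p^\perp\cap U$ is a subspace of $U$. Every line of $U$ is a line of $\Delta$, and $p^\perp$ meets every line of $\Delta$. Hence $p^\perp\cap U$ meets every line of $U$, so it is a hyperplane of the geometry $U$ in the sense of the definition above. Either this hyperplane is all of $U$, or it is a proper hyperplane. In the proper case, since $U$ is a projective space (or a point or a line, where the statement is immediate), a proper geometric hyperplane is a hyperplane in the usual projective sense, of codimension $1$. To see this, take $x\in U\setminus p^\perp$. Every point $y\in U$ other than $x$ lies on the line $xy$, which meets $p^\perp\cap U$. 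Therefore $U=\langle x, p^\perp\cap U\rangle$. In the finite-dimensional projective setting this gives $\dim(p^\perp\cap U)=\dim U-1$.

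\textbf{Second claim.} Set $W:=p^\perp\cap U$. First check that $\langle p,W\rangle$ is singular: $p$ is collinear with every point of $W$, and points of $W$ are mutually collinear. By the remark in the paper that subspaces generated by pairwise collinear points are singular (this is \cite[Lemma~7.3.8]{PointsAndLines}-type material, or follows from the one-or-all axiom), $\langle p,W\rangle$ is a singular subspace, hence a projective space.

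It remains to show $p\notin W$. If $p\notin U$, this is clear. In the other case, $W$ is a proper hyperplane of $U$; if $p\in U$ then $p\perp$ every point of $U$, so $W=U$, a contradiction, hence again $p\notin W$. In a projective space, joining a subspace $W$ with a point outside it raises the dimension by exactly one. That gives $\dim\langle p,W\rangle=\dim W+1$. Note that the span taken in $\Delta$ coincides with the span inside the singular subspace, because $\langle p,W\rangle$ is already a subspace of $\Delta$.

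\textbf{Main obstacle.} The only delicate point is justifying that the span of $p$ and $W$ in $\Delta$ is singular, and hence a projective space in which ordinary dimension counting holds. I would cite the standard fact that a set of mutually collinear points generates a singular subspace in a polar space. If needed, this can be proved by induction using that $q^\perp$ is a subspace for each $q$: the set $\{p\}\cup W\subseteq q^\perp$ for every $q$ in it, so its span lies in $\bigcap q^\perp$, and one iterates.
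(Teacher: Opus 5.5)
Your proof is correct. Restricting the geometric hyperplane $p^\perp$ to the projective space $U$ gives the first claim, where $U=\langle x,p^\perp\cap U\rangle$ for any $x\in U\setminus p^\perp$ yields codimension one. The dimension formula then follows from your two-step argument that $\{p\}\cup(p^\perp\cap U)$ spans a singular subspace, together with your observation that $p\notin U$ forces $p\notin p^\perp\cap U$.

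The paper gives no argument of its own here; it simply refers to Proposition~1.4.1 of Van Maldeghem's monograph. Your write-up is a standard self-contained verification of that fact from the Buekenhout--Shult axioms, so there is no separate route in the paper to compare it against.
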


\begin{proof}
This is \cite[Proposition~1.4.1]{MonographPolarSpaces}.
\end{proof}

The last proposition justifies the following important definition.

\begin{defn}
Let $U$ and $V$ be two singular subspaces. We denote the set of points in $U$ that are collinear to all points of $V$ as $\proj_{U}(V)$ and call it the \emph{projection of $V$ onto $U$}. We call $U$ and $V$ opposite if both $\proj_{U} (V)$ and $\proj_{V} (U)$ are empty. 
\end{defn}

\begin{prop} Let $U$ and $V$ be singular subspaces of $\Delta$. Then the following hold:
\begin{enumerate}[label=(\roman*)]
\item $\proj_{U}(V)$ is a subspace of $U$.
\item $\dim (V) - \dim (\proj_V (U)) = \dim (U) - \dim (\proj_U (V))$. 
\item Two singular subspaces $U$ and $V$ are opposite if and only if $\dim (U) = \dim (V)$ and no point of $U$ is collinear to all points of $V$.
\item Let $U$ be disjoint from a maximal singular subspace $M$. Then $U' := \< U, \proj_M U\>$ is the unique maximal singular subspace containing $U$ and intersecting $M$ in a singular subspace of dimension $n - 2 - \dim (U)$. Moreover, $U'$ is the union of all singular subspaces containing $U$ as a hyperplane and intersecting $M$ in at least one point.
\item Each maximal singular subspace $U$ has an opposite in $\Delta$.
\end{enumerate}
\end{prop}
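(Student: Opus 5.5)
Most likely the paper just cites \cite[Section~1.4--1.6]{MonographPolarSpaces} for this. Still, I would prove the five parts in order, using \cref{proposition2.10} as the main tool together with the basic fact that a singular subspace $U$ and a point $p\in U^\perp$ generate a singular subspace.

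For (i), note that $\proj_U(V)=U\cap V^\perp=\bigcap_{v\in V}(v^\perp\cap U)$. By \cref{proposition2.10} each $v^\perp\cap U$ is $U$ or a hyperplane of $U$, hence a subspace of $U$. An intersection of subspaces is a subspace, which gives (i).

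For (ii), put $A=\proj_U(V)$ and $B=\proj_V(U)$. The set $W:=\<A,V\>$ is singular, because $A\subseteq V^\perp$ and $A\subseteq U$ is singular. The plan is to compute $\dim U-\dim A$ as the number of independent hyperplane conditions that $V$ imposes on $U$. I would choose points $v_1,\dots,v_k$ spanning a complement of $B$ in $V$, so $k=\dim V-\dim B$. Then I would argue by induction on $k$ using \cref{proposition2.10}:
\begin{itemize}
\item each new point $v_i$ cuts $U\cap\{v_1,\dots,v_{i-1}\}^\perp$ by exactly one hyperplane;
\item if it failed to cut, then $v_i$ would be collinear with the same part of $U$ as $\<B,v_1,\dots,v_{i-1}\>$.
\end{itemize}
A symmetric counting argument, swapping the roles of $U$ and $V$, then forces $\dim U-\dim A\le \dim V-\dim B$, and by symmetry we get equality. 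This bookkeeping is the main obstacle. I would handle it cleanly by passing to the residue: quotient by $A\cap B$ via the polar space $(A\cap B)^\perp/(A\cap B)$, and reduce to the case $A\cap B=\emptyset$. In that case $U'=U/A$ and $V'=V/B$ should be paired nondegenerately by collinearity, and the dimension equality comes out as in linear duality.

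Part (iii) follows from (ii). If $U$ and $V$ are opposite, then both projections are empty (dimension $-1$), so $\dim U=\dim V$. Conversely, if $\dim U=\dim V$ and $\proj_U(V)=\emptyset$, then (ii) forces $\proj_V(U)=\emptyset$.

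For (iv), set $A=\proj_M U$. By (ii) with $\dim M=n-1$ and $\proj_U(M)=\emptyset$ (a point of $U$ collinear with all of $M$ would lie in $M$ by maximality, contradicting disjointness), we get $\dim A=n-1-\dim U-1=n-2-\dim U$. Then $U'=\<U,A\>$ is singular of dimension $(n-2-\dim U)+\dim U+1=n-1$, hence maximal. For uniqueness, any maximal $M'\supseteq U$ satisfies $M'\cap M\subseteq A$, so a maximal singular subspace meeting $M$ in dimension $n-2-\dim U$ must meet it exactly in $A$, and it contains $\<U,A\>=U'$. For the "moreover" part: if $W\supseteq U$ as a hyperplane and $W$ meets $M$ in a point $x$, then $x\in U^\perp\cap M=A$, so $W=\<U,x\>\subseteq U'$. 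Conversely, every point of $U'\setminus U$ lies in such a $W$, namely $\<U,y\>$ for $y$ a point of $U'\setminus U$. That subspace meets $A$, since $A$ and $\<U,y\>$ are subspaces of the projective space $U'$ of dimensions $n-2-\dim U$ and $\dim U+1$, which sum to $n-1=\dim U'$.

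For (v), given a maximal singular subspace $U$, I would build an opposite one inductively. By the polar frame lemma and the existence of apartments (any two singular subspaces lie in a common apartment), choose a frame with $U=\<p_1,\dots,p_n\>$. Then $\<p_{-1},\dots,p_{-n}\>$ is singular of dimension $n-1$. No point of $U$ is collinear with all of it: a point $\sum$-spanned by the $p_i$ with some $p_i$-coordinate nonzero fails to be collinear with $p_{-i}$, by \cref{proposition2.10} applied to the line structure. So by (iii) these two subspaces are opposite.
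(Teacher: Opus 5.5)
As you anticipated, the paper gives no argument of its own here. It cites Note~1.4.4, Proposition~1.4.6, Corollaries~1.4.7--1.4.8 and Theorem~1.4.9 of Van Maldeghem's monograph. Your self-contained derivation from Proposition~\ref{proposition2.10} is therefore a different route. Parts (i), (iii) and (iv) go through as you describe, granted the standard facts that all maximal singular subspaces have dimension $n-1$ and satisfy $W^\perp=W$, and that $\langle X,Y\rangle$ is singular whenever $X\subseteq Y^\perp$. One caveat on (iv): your converse covers only $U'\setminus U$. Points of $U$ are covered only if some such $W$ exists, i.e.\ if $\dim U\le n-2$. For $\dim U=n-1$ the ``moreover'' clause is in fact false as stated, since the union is empty while $U'=U$. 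Two further places need repair.

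\textbf{Part (ii).} The step you call the main obstacle is not one, and your justification of it fails.
\begin{itemize}
\item Your second bullet does not prove that each $v_i$ cuts off \emph{exactly} one hyperplane. Deriving a contradiction from ``$v_i$ has the same trace on $U$ as $\langle B,v_1,\dots,v_{i-1}\rangle$'' presupposes that distinct subspaces of $V$ through $B$ have distinct traces on $U$, which is essentially (ii) itself.
\item The fallback via ``linear duality'' is also unavailable. The Buekenhout--Shult axioms give no coordinates or forms, and in rank~$3$ the singular planes may be non-Desarguesian.
\end{itemize}
You only need the easy inequality. Put $U_0=U$ and $U_i=U_{i-1}\cap v_i^\perp$; Proposition~\ref{proposition2.10} gives $\dim U_i\ge \dim U_{i-1}-1$. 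Moreover $U_k=U\cap V^\perp=A$, because $V^\perp=B^\perp\cap v_1^\perp\cap\dots\cap v_k^\perp$ and $U\subseteq B^\perp$. Hence $\dim U-\dim A\le k=\dim V-\dim B$, and exchanging $U$ and $V$ gives equality. No quotient by $U\cap V$ is needed.

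\textbf{Part (v).} Invoking the apartment proposition is formally allowed, since the paper states it earlier, but it merely outsources (v). A frame with $U=\langle p_1,\dots,p_n\rangle$ already exhibits the opposite $\langle p_{-1},\dots,p_{-n}\rangle$, and in the monograph that result comes after Theorem~1.4.9. Your ``coordinates'' should also be replaced: by Proposition~\ref{proposition2.10}, $U\cap p_{-i}^\perp=\langle p_j : j\neq i\rangle$, and these $n$ hyperplanes of $U$ have empty intersection.

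A direct proof is just as short.
\begin{itemize}
\item Start with $W=U$. If $x\in U\cap W$, pick $y\not\perp x$ (non-degeneracy) and put $H=y^\perp\cap W$, a hyperplane of $W$ missing $x$.
\item Let $W'=\langle H,y\rangle$. It is maximal, and $W'\cap W=H$ since $y\notin W$.
\item Any $z\in U\cap W'$ is collinear with $H$ and with $x$, hence with $\langle H,x\rangle=W$. So $z\in W^\perp\cap W'=H$.
\item Thus $U\cap W'=U\cap W\cap y^\perp$ is a hyperplane of $U\cap W$.
\end{itemize}
After $n$ steps $U\cap W=\emptyset$, so $\proj_U(W)=U\cap W^\perp=\emptyset$, and (iii) gives opposition.
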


\begin{proof}
This is \cite[Note~1.4.4]{MonographPolarSpaces}, \cite[Proposition~1.4.6]{MonographPolarSpaces}, \cite[Corollary~1.4.7]{MonographPolarSpaces}, \cite[Corollary~1.4.8]{MonographPolarSpaces}, and \cite[Theorem~1.4.9]{MonographPolarSpaces}.
\end{proof}

For our construction, we will want to map from residues to residues and introduce a special way of projecting. For that, we first need the following definition.

\begin{defn}
Let $U$ be a singular subspace of rank at most $n-3$ of a polar space $\Delta$ of rank $n$. Let $X_U$ be the set of all singular subspaces of $\Delta$ with dimension $1+\dim(U)$ which contain $U$. Let, for each singular subspace $V$ containing $U$, $V/U$ be the set of elements of $X_U$ contained in $V$. Let $\Omega_U$ be the set of all such $V/U$, for $V$ ranging through the set of all singular subspaces of $\Delta$ containing $U$. We define a new geometry $\Res_{\Delta} (U)$ with $\cP= X_U$ and $\cL$ the set of $1$-dimensional subspaces of $\Omega_U$ and call it the \emph{residue of $U$ in $\Delta$}. 
\end{defn}

\begin{prop} 
The structure $\Res_{\Delta} (U)$ is a polar space of rank $n - 1 - \dim (U)$. It is thick if and only if $\Delta$ is thick and top-thin if and only if $\Delta$ is top-thin. If $U$ and $U'$ are two opposite singular subspaces of dimension at most $n - 3$, then the polar spaces $\Res_{\Delta} (U)$ and $\Res_{\Delta} (U')$ are isomorphic to each other.
\end{prop}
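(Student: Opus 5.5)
The statement has three parts: $\Res_\Delta(U)$ is a polar space of the right rank, thickness and top-thinness pass between $\Delta$ and the residue, and opposite subspaces have isomorphic residues. I would reduce everything to a concrete model of the residue inside $\Delta$. Let $d=\dim U$, and choose a singular subspace $W$ opposite $U$ (one exists by taking an apartment containing $U$). Set
\[
R := U^\perp \cap W^\perp .
\]
This is a subspace. The plan is to show that it is a non-degenerate polar space isomorphic to $\Res_\Delta(U)$.

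\emph{Step 1: the case $d=0$.} Here $U=\{p\}$ and $W=\{b\}$ with $p\equiv b$, so $R=p^\perp\cap b^\perp$. By the lemma on opposite points, $\Delta_{p,b}$ is a polar space as soon as it has lines.

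Define $\phi\colon R\to X_U$ by $x\mapsto px$. This map is a bijection.
\begin{itemize}
\item Every line $L$ through $p$ meets $b^\perp$ in exactly one point, because $b^\perp$ is a hyperplane not containing $p$; hence $\phi$ is surjective.
\item Injectivity follows from the same fact.
\end{itemize}

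Next, $\phi$ and $\phi^{-1}$ preserve lines. A singular plane $\pi\ni p$ meets $b^\perp$ in a line of $R$, by \cref{proposition2.10}. Conversely, a line $M\subseteq R$ spans with $p$ a singular plane. The same argument shows that singular subspaces of dimension $k$ through $p$ correspond to singular subspaces of dimension $k-1$ of $R$.

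Consequently $R$ has rank $n-1$, and it has lines precisely because $n\ge 3$ (this is the role of the hypothesis $\dim U\le n-3$). Thickness and top-thinness transfer, since maximal and submaximal subspaces through $p$ correspond bijectively to maximal and submaximal subspaces of $R$.

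It remains to check non-degeneracy of $\Res_\Delta(p)$. A line through $p$ collinear to all points of $\Res$ would give a point $x\in R$ collinear with all of $p^\perp\cap b^\perp$. To rule this out, I take an apartment through $p,b,x$ and use the opposite of $x$ inside that frame, which lies in $R$.

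\emph{Step 2: general $d$, by induction.} Pick a point $p\in U$. Then $\Res_\Delta(U)\cong \Res_{\Res_\Delta(p)}(U/p)$, since both are built from the same chains of subspaces containing $U$. Step 1 and induction then give rank $(n-1)-1-(d-1)=n-1-d$, and the thickness statements pass through each step.

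\emph{Step 3: opposite subspaces.} For $U$ and $U'$ opposite, I compose the two models: $\Res_\Delta(U)\cong U^\perp\cap U'^\perp\cong\Res_\Delta(U')$. For this, $W$ must be allowed to equal $U'$, so the construction above has to be carried out for arbitrary opposite $W$, not a special one. The explicit map is $V\mapsto \langle \proj$-data$\rangle$, namely
\[
V \mapsto \langle U',\, V\cap U'^\perp\rangle ,
\]
which has inverse of the same form. Its well-definedness uses \cref{proposition2.10} together with the opposite-subspace characterization (iii) and (iv) of the projection proposition, to get $\dim(V\cap U'^\perp)=d+1-\dim U=1$ relative to $U$.

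I expect the main obstacle to be this dimension bookkeeping for $d\ge1$: showing that $V\cap U'^\perp$ meets $U$ trivially and has exactly one more dimension than $\emptyset$. I would try to settle it with an apartment containing $V$ and $U'$, where everything becomes a coordinate check in the hyperoctahedron.
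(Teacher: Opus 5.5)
\textbf{Verdict: correct, by a genuinely different route, with three places to tighten.}

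The paper gives no argument for this statement; it quotes Theorem~1.6.4 and Corollary~1.6.8 of Van Maldeghem's monograph. Your proposal is a self-contained alternative. You identify $\Res_\Delta(p)$ with $p^\perp\cap b^\perp$ via $x\mapsto px$, using only Proposition~\ref{proposition2.10} and the quoted lemma on $\Delta_{p,b}$. You pass to general $U$ through $\Res_\Delta(U)\cong\Res_{\Res_\Delta(p)}(U/p)$. For opposite $U,U'$ you write down the explicit isomorphism $V\mapsto\langle U',V\cap U'^\perp\rangle$. This uses nothing beyond facts already listed in the preliminaries. It also exhibits the identification the paper relies on later: for $U=\{p\}$, $U'=\{b\}$ your map is exactly the projection $\proj^p_b$.

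The three points to tighten:

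(1) \emph{Non-degeneracy.} Your separate check is superfluous. In this paper ``polar space'' means non-degenerate, so the lemma on $\Delta_{p,b}$ already gives it. As written the check is also unjustified: the proposition that two singular subspaces lie in a common apartment does not give a frame containing all of $p,b,x$. For instance, a frame containing $px$ and $b$ may use a point of $px$ other than $p$. A direct fix: take a frame containing $p$ and $x$, and let $x^*$ be its point opposite $x$. Then $px^*\cap b^\perp$ lies in $R$ and is not collinear with $x$.

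(2) \emph{Thickness and top-thinness.} Your bijection only controls submaximal subspaces through $U$. So it gives ``$\Delta$ thick $\Rightarrow$ residue thick'' and ``$\Delta$ top-thin $\Rightarrow$ residue top-thin''. The converses need the thick/top-thin dichotomy (Theorem~1.7.1 of the monograph, quoted in the paper).

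(3) \emph{Step~3.} The obstacle you anticipate is immediate, and your count is off by one. Since $U\subseteq V$ and $U$ is opposite $U'$, we have $\proj_{U'}(V)=\emptyset$. Part~(ii) of the projection proposition then gives $\dim\proj_V(U')=(d+1)-d-1=0$. So $V\cap U'^\perp$ is a single point, lying outside both $U$ and $U'$.
\begin{itemize}
\item The same count for a $(d+2)$-space $W\supseteq U$ gives a line, so lines go to lines.
\item The inverse is $V'\mapsto\langle U,V'\cap U^\perp\rangle$, because that point already lies in $V'\cap U^\perp$.
\item No apartment is needed.
\item You can drop the intermediate model $U^\perp\cap U'^\perp$ for $d\ge 1$, which Step~2 never actually constructed.
\end{itemize}
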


\begin{proof}
This is \cite[Theorem~1.6.4]{MonographPolarSpaces} and \cite[Corollary~1.6.8]{MonographPolarSpaces}.
\end{proof}

\begin{defn}
We call two subspaces, both contained in $\Res_{\Delta}(U)$ and opposite in $\Res_{\Delta}(U)$, \emph{locally opposite (at $U$)} in $\Delta$.
\end{defn}

Now, let $\Delta$ be a polar space of rank $3$.
Let $p$ and $b$ be two opposite points in $\Delta$. Then 
$$\Res_{\Delta} (p) \cong \Res_{\Delta} (b) \cong p^{\perp} \cap b^{\perp} =: \Gamma$$
defines a generalized quadrangle in $\Delta$. The apartments of $\Gamma$ are ordinary quadrangles. 
Let $u$ be a point in $p^{\perp}$. Then we can view $u$ as a point of $\Res_{\Delta}(p)$ by identifying it with the line $pu$. 
We \emph{project $pu$ to $b$} by projecting $b$ onto $pu$ in the usual way, obtaining a point $u'$ collinear to $b$, and then setting $bu'$ as the image of the \emph{projection of $pu$ onto $b$}. 
Like this, we define a \emph{projection from the residue of $p$ to the residue of $b$}, and we also write $\proj_{b}^{p}$.

\begin{defn}
Let $p_0$, $p_1$, \dots, $p_s$ be points inside a polar space $\Delta$, such that $p_i$ is opposite $p_{i-1}$ and $p_{i+1}$ for $i \in \Z / s\Z$. A composition 
$$ \proj_{p_s}^{p_{s-1}} \circ \dots \circ \proj_{p_1}^{p_0} $$
is called a \emph{projectivity}. We call it a \emph{self-projectivity} if $p_s = p_0$, and \emph{even} if $s$ is even. 
The number $s$ is called the \emph{length of the projectivity}.
\end{defn}

\begin{nota}
For a projectivity
$$ \proj_{p_s}^{p_{s-1}} \circ \proj_{p_{s-1}}^{p_{s-2}}  \circ \dots \circ \proj_{p_2}^{p_1} \circ \proj_{p_1}^{p_0}, $$
we also write
$$p_0 \per p_1 \per p_2 \per \dots \per p_{s-1} \per p_{s},$$
which is shorter and enables us to read from left to right and see immediately that we project from $p_0$ to $p_1$ to $\dots$ to $p_s$.
\end{nota}

\section{Definition of the Moufang property for polar spaces}

Let $\Delta$ be a polar space of rank $n \geq 2$. 
Given an apartment $\mathcal{A}$ spanned by a polar frame $$\{p_{-n},p_{-n+1},\ldots,p_{-1},p_1,p_2,\ldots,p_n\},$$ we can obtain a \emph{half-apartment} $\alpha$ in two different ways:
\begin{enumerate}
\item[(R1)] by removing one point $p_{i}$ of the polar frame and all singular subspaces of $\mathcal{A}$ that contain $p_{i}$.
\item[(R2)] by removing two collinear points $p_{i}$ and $p_{j}$ of the polar frame, all maximal singular subspaces $A$ of $\mathcal{A}$ that contain $p_{i}$ and $p_{j}$, and all singular subspaces contained in them that either contain $p_i$ or $p_j$, or
\end{enumerate}
Half-apartments are also called \emph{roots}. We will say that a root is \emph{of the first kind} if we remove one point of the corresponding polar frame, and \emph{of the second kind} if we remove two points. The \emph{interior $\alpha^{+}$ of a root $\alpha$ of the first kind} is obtained when we also remove all submaximal singular subspaces that were contained in one of the removed maximal singular subspaces. The \emph{interior $\alpha^{+}$ of a root $\alpha$ of the second kind} is obtained when we also remove all maximal singular subspaces that contain either $p_{i}$ or $p_{j}$.

\begin{exm} We consider apartments of a polar space of rank $3$ and mark what is contained on the interior of a root in both cases in gray. 

\begin{center}
\begin{tabular}{>{\centering\arraybackslash} m{4cm}  >{\centering\arraybackslash} m{4cm}}
\begin{tikzpicture}
\begin{scope}[xscale=1.35, yscale=1.3, xshift=3cm]
\coordinate (q) at (1, 0, 0);  
\coordinate (b) at (0, 1, 0);  
\coordinate (n) at (-1, 0, 0); 
\coordinate (p) at (0, -1, 0); 
\coordinate (u) at (0, 0, 0.8);  
\coordinate (d) at (0, 0, -0.8); 

\draw[] (q) -- (b);
\draw[] (b) -- (n);
\draw[] (n) -- (p);
\draw[] (p) -- (q);
\draw[] (q) -- (u);
\draw[] (b) -- (u);
\draw[] (n) -- (u);
\draw[] (p) -- (u);
\draw[opacity=0.3] (q) -- (d);
\draw[opacity=0.3] (b) -- (d);
\draw[opacity=0.3] (n) -- (d);
\draw[opacity=0.3] (p) -- (d);

\node at (q) [circle, fill, inner sep=1pt, label={[label distance=-1mm]above right:{\small \(  \)}}] {};
\node at (b) [circle, fill, inner sep=1pt, label={[label distance=-1mm]above:{\small \( \)}}] {};
\node at (n) [circle, fill, inner sep=1pt, label={[label distance=-1mm]left:{\small \(  \)}}] {};
\node at (p) [circle, fill, inner sep=1pt, label={[label distance=-1mm]below:{\small \(  \)}}] {};
\node at (u) [circle, fill, inner sep=1pt, label={[label distance=-2.5mm]below left:{\small \(  \)}}] {};
\node at (d) [circle, fill, inner sep=1pt, label={[label distance=-2.5mm]above right:{\small \(  \)}}] {};

\fill[black, opacity=0.1] (d) -- (q) -- (b) -- cycle;
\fill[black, opacity=0.1] (u) -- (q) -- (b) -- cycle;
\fill[black, opacity=0.1] (n) -- (d) -- (b) -- cycle;
\fill[black, opacity=0.1] (n) -- (u) -- (b) -- cycle;
\draw[line width=0.6mm, black, opacity=0.4] (b) -- (d);
\draw[line width=0.6mm, black, opacity=0.4] (b) -- (q);
\draw[line width=0.6mm, black, opacity=0.4] (b) -- (n);
\draw[line width=0.6mm, black, opacity=0.4] (b) -- (u);
\node at (b) [circle, fill, inner sep=2.5pt, opacity = 0.4] {};
\end{scope}
\end{tikzpicture}
\captionof{figure}{The interior of a root of the first kind.}

&

\begin{tikzpicture}
\begin{scope}[xscale=1.35, yscale=1.3]
\coordinate (q) at (1, 0, 0);  
\coordinate (b) at (0, 1, 0);  
\coordinate (n) at (-1, 0, 0); 
\coordinate (p) at (0, -1, 0); 
\coordinate (u) at (0, 0, 0.8);  
\coordinate (d) at (0, 0, -0.8); 

\draw[] (q) -- (b);
\draw[] (b) -- (n);
\draw[] (n) -- (p);
\draw[] (p) -- (q);
\draw[] (q) -- (u);
\draw[] (b) -- (u);
\draw[] (n) -- (u);
\draw[] (p) -- (u);
\draw[opacity=0.3] (q) -- (d);
\draw[opacity=0.3] (b) -- (d);
\draw[opacity=0.3] (n) -- (d);
\draw[opacity=0.3] (p) -- (d);
\end{scope}

\node at (q) [circle, fill, inner sep=1pt, label={[label distance=-1mm]above right:{\small \(  \)}}] {};
\node at (b) [circle, fill, inner sep=1pt, label={[label distance=-1mm]above:{\small \( \)}}] {};
\node at (n) [circle, fill, inner sep=1pt, label={[label distance=-1mm]left:{\small \(  \)}}] {};
\node at (p) [circle, fill, inner sep=1pt, label={[label distance=-1mm]below:{\small \(  \)}}] {};
\node at (u) [circle, fill, inner sep=1pt, label={[label distance=-2.5mm]below left:{\small \(  \)}}] {};
\node at (d) [circle, fill, inner sep=1pt, label={[label distance=-2.5mm]above right:{\small \(  \)}}] {};

\fill[black, opacity=0.1] (d) -- (q) -- (b) -- cycle;
\fill[black, opacity=0.1] (d) -- (q) -- (p) -- cycle;
\draw[line width=0.6mm, black, opacity=0.4] (d) -- (p) -- (q) -- (b) -- (d);
\draw[line width=0.6mm, black, opacity=0.4] (d) -- (q);
\node at (d) [circle, fill, inner sep=2.5pt, opacity = 0.4] {};
\node at (q) [circle, fill, inner sep=2.5pt, opacity = 0.4] {};
\end{tikzpicture}
\captionof{figure}{The interior of a root of the second kind.}
\end{tabular}
\end{center}
\end{exm}

\begin{defn}[\textbf{Collineations}]\label{collineation}
An \emph{automorphism} or a \emph{collineation} of a point-line geometry is a permutation of the point set inducing a
permutation of the line set; that is, the set of points on any line is mapped onto the set of points of another line, and each line occurs as such an image. In particular, collineations preserve collinearity. The set of all collineations forms a group called the \emph{collineation group} (cf. \cite[\S 7]{MonographPolarSpaces}).
\end{defn}

\begin{defn}\label{MoufangDefn1}
Let $\Delta$ be a polar space of rank $n \geq 2$. 
If, for every root $\alpha$, the group of automorphisms of $\Delta$ that fix $\alpha^{+}$ pointwise and stabilize all singular subspaces that intersect $\alpha^{+}$ nontrivially acts transitively on the set of apartments containing $\alpha$, then we say that $\Delta$ has the \emph{Moufang property}, or that $\Delta$ is \emph{Moufang}.
\end{defn}

\begin{defn}[\textbf{Root elations}]
The collineations of a polar space that fix the interior of a root pointwise are also called \emph{root elations}.
\end{defn}

Thus, the Moufang property ensures the existence of nontrivial root elations. 
In the following, we want to give descriptions of and names to different automorphisms of polar spaces and express the Moufang property solely through these automorphisms, without needing the notions of apartments and roots.

\begin{defn}[\textbf{Point elations}]
Let $\Delta$ be an arbitrary polar space, and let $L$ and $L'$ be two lines through a point $c$ that are not coplanar. 
A \emph{point elation} (with center $(L, c, L')$) is a collineation of $\Delta$
stabilizing all singular subspaces containing $c$ and fixing $L$ and $L'$ pointwise.
\end{defn}

\begin{defn}[\textbf{Line elations}]
Let $\Delta$ be an arbitrary polar space, let $L$ be a line, and let $p$ and $q$ be two distinct points on $L$. 
A \emph{line elation} (with center $(p, L, q)$) is a collineation of $\Delta$ fixing all
points on $L$ and stabilizing all singular subspaces containing $p$ or $q$ (where
$p$ and $q$ are two distinct points on $L$).
\end{defn}

\begin{defn}\label{MoufangDefn2}
Let $\Delta$ be an arbitrary polar space. We define the following two properties.
\begin{enumerate}[label=(\roman*)]
\item[(M1)] For any point $c$ and any two noncoplanar lines $L$ and $L'$ through $c$, the group of point elations with center $(L, c, L')$ acts transitively on the set of points opposite $c$ in $x^{\perp} \cap x{'}^{\perp}$, where $x \in L \setminus \{c\}$ and $x' \in L' \setminus \{c\}$.
\item[(M2)] For any line $L$ and any two distinct points $p$ and $q$ on $L$, the group of line elations with center $(p, L, q)$ acts transitively on the points of the lines through $p$ that are not coplanar with $L$. 
\end{enumerate}
\end{defn}


\begin{prop}\label{MoufangDefn3}
Let $\Delta$ be a polar space of rank $n$. Then $\Delta$ is Moufang if and only if it satisfies the properties (M1) and (M2). 
\end{prop}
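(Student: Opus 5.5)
The plan is to match the two kinds of roots with the two kinds of elations, and to show that in each case ``transitive on apartments containing $\alpha$'' is the same as ``transitive on one distinguished set of points''. For a root of the first kind, obtained by removing $p_i$ from the frame, set $c:=p_{-i}$. I claim that $\alpha^{+}$ consists of the singular subspaces of $\mathcal{A}$ that contain $c$. All remaining frame points lie in $c^\perp$ except $p_i$, and the subspaces of $\mathcal{A}$ not containing $c$ and not containing $p_i$ lie in submaximal subspaces of removed maximal ones, hence are removed as well. I will verify this by a direct combinatorial check inside the hyperoctahedron.

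For a root of the second kind, obtained by removing the collinear points $p_i,p_j$, set $L:=p_{-i}p_{-j}$, $p:=p_{-i}$ and $q:=p_{-j}$. Here $\alpha^{+}$ should be the set of subspaces of $\mathcal{A}$ that contain $p$ or $q$ and lie in a maximal subspace meeting $L$ in a point. I will check that the condition ``fix $\alpha^{+}$ pointwise and stabilize all singular subspaces meeting $\alpha^{+}$'' then becomes exactly the defining condition of a line elation with center $(p,L,q)$, restricted to elations fixing the frame points in $p^\perp\cap q^\perp$. In the first-kind case it becomes the condition for a point elation with center $(L,c,L')$, where $L=cp_k$ and $L'=cp_{-k}$ for some $k\neq\pm i$ (these two lines are not coplanar).

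Next I will parametrize the apartments containing $\alpha$. In the first kind, such an apartment is determined by the single point replacing $p_i$. That point must be opposite $c$ and collinear to all $p_k$ with $k\neq \pm i$, so it lies in $x^\perp\cap x'^\perp$ for suitable $x,x'$ by iterating over the frame. This needs care. The fixing of $\alpha^{+}$ should reduce the full frame condition to the two-line condition in (M1). I would prove that any point-elation fixing $L,L'$ and stabilizing all subspaces through $c$ automatically fixes the other frame points $p_{\pm k'}$: each of these is determined as the intersection of singular subspaces through $c$ with $\{p_k,p_{-k}\}^\perp$. In the second kind, the apartment is determined by the pair of replacement points $(p_i',p_j')$. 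These are constrained to lie on lines through $p$ (respectively $q$) inside the fixed maximal subspaces. Since a line elation acts compatibly, I reduce transitivity to transitivity on points of a single line through $p$ not coplanar with $L$, which is (M2). I also need the converse: (M2) and (M1) imply Moufang. For this I show that every pair of (M1)/(M2) data extends to a frame, using the earlier results that any two singular subspaces lie in a common apartment and that maximal singular subspaces have opposites.

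The main obstacle is the identification of the groups. A priori, a point or line elation only fixes $L,L'$ (or $L$) pointwise and stabilizes subspaces through $c$ (or $p,q$). It does not obviously fix all of $\alpha^{+}$ pointwise, and conversely the root-elation condition might be strictly weaker. I will handle this with the projection property (Proposition~\ref{proposition2.10}): a point of $\alpha^{+}$ is recovered as the intersection of stabilized singular subspaces through $c$ with $\proj$ onto the fixed lines, so it is fixed. Rank $2$ must be treated separately, since there (M1) degenerates: $x^\perp\cap x'^\perp$ is a pair of points' perps, with no planes. I expect that case to follow directly from the quadrangle definition of root elations.
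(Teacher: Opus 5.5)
\textbf{The gap is your identification of first-kind root groups with point-elation groups when $n\ge 3$.}

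You assert that every point elation with center $(L,c,L')$, where $L=cp_k$ and $L'=cp_{-k}$, fixes the remaining frame points. You propose to recover $p_{k'}$ from subspaces through $c$ together with $\{p_k,p_{-k}\}^\perp$. That recovery cannot work, because for $k'\neq\pm i,\pm k$ the whole line $cp_{k'}$ already lies in $\{p_k,p_{-k}\}^\perp$.

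The claim itself is also false. In any Moufang polar space of rank at least $3$, take a line elation $\lambda$ with center $(c,cp_{k'},p_{k'})$ that moves $p_{-k'}$. It fixes $\{c,p_{k'}\}^\perp\supseteq L\cup L'$ pointwise and stabilizes every subspace through $c$. So it is a point elation with center $(L,c,L')$, yet it does not fix $p_{-k'}$.

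Accordingly, the set in (M1) is strictly larger than the set of apartments through $\alpha$. The point $\lambda(p_i)$ is opposite $c$ and has projections $p_k,p_{-k}$ on $L,L'$, but it is not collinear with $p_{-k'}$.

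Worse, two points of the (M1)-set need not be the points opposite $c$ in two apartments sharing all their other frame points. Take the symplectic space with hyperbolic basis $e_{\pm1},e_{\pm2},e_{\pm3}$, and let $c=\langle e_{-1}\rangle$, $L=\langle e_{-1},e_2\rangle$, $L'=\langle e_{-1},e_{-2}\rangle$. Then $a=\langle e_1\rangle$ and $\hat a=\langle e_1+e_3\rangle$ both lie in the (M1)-set. However, $\{a,\hat a,c\}^\perp=\langle e_{\pm2},e_3\rangle$ is $3$-dimensional and contains no frame of $\{a,c\}^\perp$.

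So your step ``every pair of (M1)-data extends to a frame'' fails. Moufang $\Rightarrow$ (M1) cannot come from the transitivity of a single root group. Note also that rank $2$, which you set aside as degenerate, is exactly the case where your identification is correct.

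\textbf{What is missing is the paper's reduction.} Fix maximal singular subspaces $U\supseteq M$ and $V\supseteq N$, locally opposite at $c$.
\begin{itemize}
\item If $a^\perp\cap U=\hat a^\perp\cap U$ and $a^\perp\cap V=\hat a^\perp\cap V$, a single first-kind root elation suffices (Case~1).
\item Otherwise, the paper first applies line elations with center $(c,K,K\cap a^\perp)$, for suitable lines $K$ through $c$ chosen so that $M\cup N$ is fixed pointwise. These move $a$ to a point whose traces on $U$ and $V$ agree with those of $\hat a$, replacing $U$ or $V$ if necessary (Cases~2 and~3).
\end{itemize}
The required point elation is therefore a composite of root elations of both kinds.

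\textbf{The converse direction can be repaired.} Use $a$ and $\hat a$ rather than $\{p_k,p_{-k}\}^\perp$. Suppose a point elation $\theta$ maps $p_i$ to another replacement point $\hat a$. Then $\theta(p_{k'})$ is the unique point of $cp_{k'}$ collinear with $\hat a$, namely $p_{k'}$. Hence in each plane $\langle c,p_k,p_{k'}\rangle$ the two lines $cp_k$ and $p_kp_{k'}$ are fixed pointwise, so the whole plane is fixed pointwise, and $\alpha^+$ is fixed. The paper invokes Tits's rigidity theorem at this point instead.

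\textbf{Second-kind roots.} Your reduction to (M2) is fine in outline. However, $\alpha^+$ should consist of the subspaces through $p$ or $q$ that lie in a maximal subspace \emph{containing} $L$. As you wrote it, your description includes the boundary lines $qp_i$ and $pp_j$, and these are exactly the lines along which the replacement points must move.
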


\begin{proof}
{``$\boldsymbol{\Rightarrow}$'':}

Let $\Delta$ be a polar space of rank $n$ satisfying (M1) and (M2). 

\textbf{(Point elations)} 
Let $\alpha_1$ be a root of the first kind, let $b$ be a point in $\alpha_1^+$, let $\pi$ and $\pi'$ be two maximal singular subspaces in $\alpha_1^+$ through $b$, and let $a$ and $a'$ be two noncollinear points not contained in $\alpha_1$, both opposite $b$, and such that $\proj_{\pi}(a)=\proj_{\pi}(a')$ and $\proj_{\pi'}(a)=\proj_{\pi'}(a')$. 
We need to find an elation of $\Delta$ pointwise fixing $\alpha_1^{+}$, stabilizing all singular subspaces that intersect $\alpha_1^+$, and mapping $a$ to $a'$. 
Let $L$ and $L'$ be two lines in $\alpha_1^{+}$ through $b$ in $\pi$ and $\pi'$, respectively. 
According to (M1), we can find a point elation $\theta_1$ with center $(L, c, L')$ mapping $a$ to $a'$. 
Using Tits’s rigidity theorem (cf. \cite[Theorem~4.1.1]{SphericalTypeFiniteBNPairs}), we see that $\theta_1$ is the desired root elation.

\textbf{(Line elations)} Let $\alpha_2$ be a root of the second kind containing two collinear points $p$ and $q$. 
Let $m$ and $m'$ be two collinear points both not collinear with $q$ such that $mm'$ contains $p$. 
In order to show that $\Delta$ is Moufang, we need to find an elation of $\Delta$ pointwise fixing $\alpha_2^{+}$, stabilizing all singular subspaces that intersect $\alpha_2^+$, and mapping $m$ to $m'$. 
According to (M2), there exists a line elation $\theta_2$ with center $(p, pq, q)$ mapping $m$ to $m'$. 
Then $\theta_2$ is the desired root elation.

{``$\boldsymbol{\Leftarrow}$'':}

Let $\Delta$ be a polar space of rank $n$ having the Moufang property. 

\textbf{(Line elations)} Let $L$ be any line, and let $p$ and $q$ be two distinct points on $L$. 
Let $m$ and $m'$ be two collinear points both not collinear with $q$ such that $mm'$ contains $p$. 
We need to find a line elation with center $(p, L, q)$ that maps $m$ to $m'$. 
Let $\alpha_2$ be a root of the second kind containing $L$ on its interior. 
Then there exists a root elation $\theta_2$ fixing $\alpha_2^+$ pointwise, stabilizing all singular subspaces intersecting $\alpha_2^+$, and mapping $m$ to $m'$. 
Using Tits’s rigidity theorem (cf. \cite[Theorem~4.1.1]{SphericalTypeFiniteBNPairs}), in the respective residues of $p$ and $q$, we see that $\theta_2$ is the desired line elation.

\textbf{(Point elations)}  Let $c$ be a point, and let $M$ and $N$ be two noncoplanar lines through $c$. 
Let $a$ and $\hat{a}$ be two points, both opposite $c$, and such that 
$$\proj_M(a)=\proj_M(\hat{a})=:m, \quad \proj_N(a)=\proj_N(\hat{a})=:n.$$ 
We need to find a point elation with center $(M, c, N)$ that maps $a$ to $\hat{a}$.
Note that if the rank of $\Delta$ is $2$, the Moufang property ensures the existence of a root elation, which is exactly the desired point elation.
So suppose $\Delta$ has rank $n \geq 3$. 

Let $U$ be an arbitrary maximal singular subspace through $cm$, and let $V$ be a maximal singular subspace through $cn$ locally opposite at $c$.
Put
\[
U_a := a^\perp \cap U \quad \text{and} \quad V_a = a^\perp \cap V,
\]
and similarly for $\hat{a}$:
\[
U_{\hat{a}} := \hat{a}^\perp \cap U \quad \text{and} \quad V_{\hat{a}} = \hat{a}^\perp \cap V.
\]

\textbf{Case 1.} $U_a = U_{\hat{a}}$ and $V_a = V_{\hat{a}}$.

Then there exist a root $\alpha$ of the first kind containing $c$ in its interior,
and a root elation fixing the interior of $\alpha$ and mapping $a$ to $\hat{a}$.

\textbf{Case 2.} $U_a = U_{\hat{a}}$ and $V_a \neq V_{\hat{a}}$.

We denote $V_a \cap V_{\hat{a}}$ by $Y$. 
Then $Y$ is a subspace of dimension $n-2$.
Select a line $L$ through $c$ in $V$ not intersecting $Y$. 
Let $e$ be the projection of $a$ onto $L$ and $\hat{e}$ the projection of $\hat{a}$ onto $L$.  
These are different, as $L$ does not intersect $Y$.  
Consider  
\[
K = U \cap Y^\perp.
\]
Then $K$ is a line through $c$.
Consider the line elation with center $(c, K, K \cap a^\perp)$ mapping $e$ to $\hat{e}$.  
Then $a$ is sent to a point $a'$ that has the same projection onto $L$ as $\hat{a}$.  
Moreover, $Y$ and $U$ are fixed pointwise.
Hence, with self-explanatory notation, we have $U_{\hat{a}} = U_{a'}$, since $U_a$ is fixed pointwise,
and $V_{\hat{a}} = V_{a'}$, since $\proj_L (a') = \hat{e}$ and $V_{a'}$ is a hyperplane of $V$ not containing $c$ or $e$.

Now we can apply Case 1 to $a'$ and $\hat{a}$.
The composition of root elations is then the desired point elation.

\textbf{Case 3.}  $U_a \neq U_{\hat{a}}$ and $V_a \neq V_{\hat{a}}$.

We denote $U_a \cap U_{\hat{a}}$ by $Z$, and $V_a \cap V_{\hat{a}}$ by $Y$. 

Let $L$ be the projection of $Z$ onto $V$, and $M$ the projection of $Y$ onto $U$.  

\begin{itemize}
\item If $L$ intersects $Y$, then we replace $U$ by $\<L,Z\>$ and $V$ by a maximal singular subspace through $cn$ locally opposite $\<L,Z\>$.
\item If $L$ does not intersect $Y$, but $M$ intersects $Z$, then we replace $V$ by $\<M,Y\>$ and $U$ by a maximal singular subspace through $cm$ locally opposite $\<M,Y\>$.
\end{itemize}

Then we are back in Case (2).  

If $L$ and $Y$ do not intersect, and $M$ and $Z$ do not intersect, $L$ and $M$ are locally opposite at $c$.  
Put
$$ d := \proj_M(a), ~ e := \proj_L(a), ~ \hat{e} := \proj_L(\hat{a}).$$

We can consider the line elation with center $(c, M, d)$ mapping $e$ to $\hat{e}$.
Then $a$ is mapped onto a point $a'$ such that $\proj_V(\hat{a}) = \proj_V(a')$,
and then we continue with Case (2) for the points $a'$ and $\hat{a}$.
\end{proof}

\begin{defn}[\textbf{Moufang property for polar spaces of infinite rank}]\label{MoufangDefnInfty}
Let $\Delta$ be a polar space of infinite rank. Then $\Delta$ is \emph{Moufang} if $\Delta$ satisfies the conditions (M1) and (M2) of Definition \ref{MoufangDefn2}.
\end{defn}

\begin{observation}
The line elations of $\Delta$ are exactly the root elations of $\Delta$ fixing the interior of a root of the second kind pointwise, and we also refer to them as such in the following.
\end{observation}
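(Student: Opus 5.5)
Two inclusions need to be shown, and each follows from comparing definitions, using the fact that a root of the second kind is determined by a pair of collinear frame points. For concreteness I take a root $\alpha$ of the second kind obtained from a polar frame by removing the collinear points $p_i$ and $p_j$. Its interior $\alpha^+$ consists of the singular subspaces of the apartment that avoid every maximal singular subspace through $p_i$ or $p_j$. The structure I need to identify is the line $L$ of $\alpha^+$ playing the role of the center: it is spanned by $p_{-i}$ and $p_{-j}$, since these two frame points are the ones whose residues carry the elation. I will then set $p:=p_{-i}$, $q:=p_{-j}$ and $L=pq$. This matches the ``$\Rightarrow$'' direction of Proposition~\ref{MoufangDefn3}, where the line elation with center $(p,pq,q)$ realizes the root elation of $\alpha_2$.

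\textbf{Line elations are root elations.} Let $\theta$ be a line elation with center $(p,L,q)$. I will complete $L$ to a polar frame in which $p$ and $q$ appear as $p_{-i}$ and $p_{-j}$, which is possible because any two singular subspaces lie in a common apartment. I then check that every point of $\alpha^+$ lies on $L$, or lies in a singular subspace through $p$ or $q$ that $\theta$ stabilizes, and is fixed by $\theta$. To verify the fixing, I express each such point as an intersection of stabilized subspaces. For example, a frame point $p_k$ with $k\neq\pm i,\pm j$ is the intersection of $\<p,p_k\>$ and $\<q,p_k\>$; the first is stabilized because it contains $p$ and the second because it contains $q$. The same intersection argument handles the remaining subspaces of $\alpha^+$, so $\theta$ fixes $\alpha^+$ pointwise and is a root elation.

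\textbf{Root elations are line elations.} Let $\theta$ fix $\alpha^+$ pointwise. It then fixes $L$ pointwise, and I must show that it stabilizes every singular subspace through $p$ (the case of $q$ is symmetric). Here I follow the proof of Proposition~\ref{MoufangDefn3}. The map $\theta$ induces an automorphism of $\Res_\Delta(p)$ that fixes pointwise the image of $\alpha^+$ there, and that image contains a full apartment of the residue together with a half-apartment around it. I then invoke Tits's rigidity theorem (cf.~\cite[Theorem~4.1.1]{SphericalTypeFiniteBNPairs}) in $\Res_\Delta(p)$ to conclude that $\theta$ acts trivially on that residue, and the same argument applies at $q$. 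For infinite rank I replace rigidity by a direct argument: a singular subspace through $p$ is generated by lines through $p$, each of which can be placed in a finite-rank subconfiguration.

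The main obstacle is the second inclusion. The difficulty is justifying that the fixed structure in $\Res_\Delta(p)$ is large enough for rigidity to apply, which means matching the combinatorics of $\alpha^+$ precisely with a configuration in the residue.
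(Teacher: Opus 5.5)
\textbf{There is a genuine gap, in your second inclusion.}

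Your first inclusion is fine. Every member of $\alpha^+$ lies in $L^\perp$, and a line elation fixes each $x\in L^\perp\setminus L$ because $x=px\cap qx$. The problem is the step you yourself flag in the second inclusion.

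If $\theta$ is only assumed to fix $\alpha^+$ pointwise, the members of $\alpha^+$ through $p$ do \emph{not} give a full apartment of $\Res_\Delta(p)$. Still less do they give an apartment together with all chambers adjacent to one of its chambers, which is what Tits's rigidity theorem needs. The subspaces of $\mathcal{A}$ through both $p$ and $p_j$ are wall elements of $\alpha$, not interior ones (compare the figure for the second kind). So you get at most a half-apartment of $\Res_\Delta(p)$. With your own description of $\alpha^+$ you get even less: only the subspaces of $\mathcal{A}$ containing $L$.

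In fact the implication is false under that hypothesis. Take the symplectic space $W(2n-1,K)$ with hyperbolic basis $e_{\pm1},\dots,e_{\pm n}$, and put $p=\langle e_{-i}\rangle$, $q=\langle e_{-j}\rangle$. The transvection $x\mapsto x+a\,f(x,e_{-j})\,e_{-j}$ with $a\neq 0$ fixes $q^\perp\supseteq L^\perp$ pointwise. By the same fact that drives your first inclusion, it therefore fixes every point of every member of $\alpha^+$. Yet it sends $e_j$ to $e_j+a e_{-j}$, so it does not stabilize the line $\langle e_{-i},e_j\rangle$ through $p$. Hence it is not a line elation with center $(p,L,q)$.

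\textbf{What is missing.} You need the second half of the root-elation condition in Definition~\ref{MoufangDefn1}: $\theta$ must also stabilize the singular subspaces meeting $\alpha^+$. In building terms, $\theta$ fixes every chamber through an interior panel of $\alpha$. This is exactly the hypothesis the paper feeds into rigidity in the ``$\Leftarrow$'' part of the proof of Proposition~\ref{MoufangDefn3}. With it the argument goes through:
\begin{enumerate}
\item Every panel of $\mathcal{A}$ having $p$ as one of its members lies in $\alpha$ but not on $\partial\alpha$. The condition therefore fixes, in $\Res_\Delta(p)$, the whole apartment induced by $\mathcal{A}$ together with every chamber adjacent to it.
\item Rigidity then makes $\theta$ trivial on $\Res_\Delta(p)$, and likewise on $\Res_\Delta(q)$.
\item The points of $L$ are fixed because, for a chamber of $\mathcal{A}$ containing the flag $p\subset L$, deleting its point-member $p$ leaves a flag all of whose members contain $L$. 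This panel is interior, and the chambers of $\Delta$ through it are exactly the ones obtained by adding a point $x\in L$.
\end{enumerate}

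Once you adopt this stronger notion, your first inclusion needs the matching extra check, which is easy. A chamber through an interior panel adds a subspace comparable to an interior member $Y$. Such a subspace either contains $Y$, hence contains $p$ or $q$ and is stabilized, or is contained in $Y\subseteq L^\perp$, hence is fixed pointwise.

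Finally, drop the infinite-rank remark. Roots and apartments are only defined for finite rank here, and for infinite rank the paper takes (M1) and (M2) as the definition.
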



\section{Constructions of elations for generalized quadrangles}\label{GQ}

Let $\tilde{\Delta}$ be a polar space of rank $n \geq 3$. Let $\Gamma$ be an arbitrary generalized quadrangle in $\Delta$ arising as the intersection of $U^\perp$ with $V^\perp$, for two opposite singular subspaces of dimension $n-3$. Then $\Gamma$ is contained in some polar space $\Delta$ of rank $3$ inside $\tilde{\Delta}$, and there exist two opposite points $p$ and $b$ in $\Delta$ such that $\Gamma = p^{\perp} \cap b^{\perp}$. Furthermore, $\Gamma$ is isomorphic to $\Res_{\Delta}(p)$ and $\Res_{\Delta}(b)$. In the following, we prove the Moufang condition for $\Gamma$. For that, we have to show that the group of automorphisms of $\Gamma$ that pointwise fix the interior of an arbitrary half-apartment $\alpha^{+}$ of an arbitrary apartment $\alpha$ of $\Gamma$ and stabilize all elements incident with some element on the interior of the half-apartment $\alpha^{+}$ acts transitively on the set of apartments containing $\alpha^{+}$. We will give direct constructions for the desired automorphisms for both kinds of roots.

\subsection{Second kind of root}

\begin{lem}\label{GQFirst}
Let ${\Delta}$ be a polar space of rank $3$, and let $\Gamma$ be the generalized quadrangle obtained from two opposite points $p,b$ of $\Delta$ by considering $p^\perp\cap b^\perp$. Then all roots of the second kind of $\Gamma$ are Moufang. 
Furthermore, every line elation of $\Gamma$ can be written as a self-projectivity of length $4$.
\end{lem}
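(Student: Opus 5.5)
The plan is to realise every line elation of $\Gamma$ explicitly as a self-projectivity $p\per q\per p'\per q'\per p$, with the auxiliary points $q,p',q'$ chosen from the configuration itself. Since $\Gamma$ has rank $2$, a root of the second kind is determined by a line $L$ of $\Gamma$ and two points $x,y\in L$. By the Observation following Definition~\ref{MoufangDefnInfty}, it suffices to show the following: for every line $M\neq L$ of $\Gamma$ through $x$ and all points $m,m'\in M\setminus\{x\}$, there is a collineation $\theta$ of $\Gamma$ with these properties:
\begin{itemize}
\item[(a)] $\theta$ fixes $L$ pointwise;
\item[(b)] $\theta$ stabilizes every line of $\Gamma$ through $x$ or through $y$;
\item[(c)] $m^\theta=m'$.
\end{itemize}
Throughout I identify $\Gamma$ with $\Res_\Delta(p)$ via $u\mapsto pu$. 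Under this identification any self-projectivity based at $p$ is a permutation of the points of $\Gamma$ that preserves collinearity, because each projection $\proj^{p_{i-1}}_{p_i}$ is an isomorphism of residues. So it remains to check (a)--(c) and that the map is not the identity when $m\neq m'$.

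\textbf{Step 1 (why length $4$).} First I note that $p\per q\per p$ is always the identity. If $u'$ is the point of $pu$ collinear with $q$, then projecting $qu'$ back onto $p$ returns $pu'=pu$. Hence the first candidate for a nontrivial map is a composition of length $4$.

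\textbf{Step 2 (choice of auxiliary points).} The guiding observation is this: whenever $z$ is collinear with all of $p,q,p',q'$, the line $pz$ is carried along $pz\mapsto qz\mapsto p'z\mapsto q'z\mapsto pz$ and is fixed. I would therefore choose the points as follows.
\begin{itemize}
\item Take $q$ opposite $p$ in $\{y,m\}^\perp$.
\item Take $q'$ opposite $p$ in $\{y,m'\}^\perp$.
\item Take $p'$ opposite both $q$ and $q'$ and collinear with every point of $L$. For instance, $p'$ can be taken on a line through $x$ in the plane $\langle p,L\rangle^{\perp}$-region, or more concretely as a point of $L^\perp$ chosen with the help of thickness.
\end{itemize}
Then $y$ is collinear with $p,q,p',q'$, so $py$ is fixed. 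Tracking $pm$ gives $pm\mapsto qm\mapsto p'm''\mapsto q'm'\mapsto pm'$, where $m''$ is the projection of $p'$ onto $qm$. I expect $m''$ to be forced to lie on the line $\langle x,M\rangle$-direction, and then the projection of $q'$ onto $p'm''$ to be $m'$, which gives (c).

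\textbf{Step 3 (the remaining fixed elements).} I would derive the rest of (a) and (b) from the quadrangle axioms rather than by direct tracking.
\begin{itemize}
\item Since $py$ and the image of $pM$ are both determined, and $\theta$ is a collineation of $\Gamma$, the line $M$ is mapped to a line through $x$ meeting $M$. In a generalized quadrangle with no triangles, this forces $M^\theta=M$ and $x^\theta=x$.
\item Fixing $x$ and $y$ then fixes $L$ as a line. To get $L$ pointwise, I would use that every point of $L$ lies in $\{p,p'\}^\perp$ and check collinearity with $q$ and $q'$ through $y$ directly.
\item For an arbitrary line $N$ through $y$: every point $z$ of $N$ projects through the chain by moving only inside $y^\perp$. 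Since $y$ is collinear to all four base points, $N$ stays within the lines through $y$ of each residue; by the usual trick of comparing projections onto $y$ it is stabilized.
\item For lines through $x$, I would use the symmetric statement with the roles of $q,q'$ replaced appropriately. Alternatively, I would use the fact that a collineation fixing $x$ and $y$, and every line through $y$, must stabilize each line through $x$ that meets a fixed line through $y$ in a fixed point.
\end{itemize}

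\textbf{Main obstacle.} The hard part is the existence and compatibility of the auxiliary points in Step 2. We need $q,p',q'$ that are simultaneously pairwise opposite where required, collinear to the right points, and such that the middle projection really lands on $m'$. This is where thickness of $\Delta$ and Proposition~\ref{proposition2.10} (points project onto lines and planes in a point or hyperplane) enter. If the naive choice of $p'\in L^\perp$ does not yield $m'$, I would first try choosing $p'$ on the line joining $q$ to a suitable point of $\langle x, m'\rangle^\perp$ and adjust $q'$ afterwards. The general principle is to fix three of the four points and solve for the fourth as a projection. Transitivity on $M\setminus\{x\}$ then follows because $m'$ is arbitrary.
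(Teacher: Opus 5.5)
Your proposal has a genuine gap. The content of the lemma is the choice of the three auxiliary points, and you never make it. Property (c) is only ``expected'', and the Step~3 arguments for (a) and (b) do not work under the constraints of Step~2.

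\textbf{Why (b) is not established.} The action of your $\theta$ on the lines of $\Gamma$ through $y$ is exactly the self-projectivity $py\per qy\per p'y\per q'y\per py$ of line pencils in the generalized quadrangle $\Res_\Delta(y)$. That $y$ is collinear with all four base points only makes this map well defined. Nothing in your choice of $q,p',q'$ forces it to be trivial.

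\textbf{Why (a) is not established.} $q$ and $q'$ are only required to be collinear with $y$. So nothing guarantees that the points of $L\setminus\{y\}$ are fixed, or even that $L^\theta=L$.

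\textbf{Other problems in Step~3.} The first bullet is circular. From $y^\theta=y$ and $m^\theta=m'$ you only get $x^\theta\in\{y,m'\}^\perp$, which meets every line of $\Gamma$ through $y$. You need $L^\theta=L$ before you can conclude $x^\theta=x$. The fallback for lines through $x$ is vacuous: no line through $x$ other than $L$ meets a line through $y$ other than $L$ (no triangles). Finally, your target in Step~2, namely $\proj_{p'm''}(q')=m'$, is stronger than necessary. You only need the last line through $q'$ to meet $pm'$.

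\textbf{The missing idea.} Use the second point $b$, and put the intermediate points on lines of the root, so that your own Step~1 trivializes the action on both pencils. This is the paper's construction: $\theta=p\per b\per j\per \ell\per p$ with $j\in px\setminus\{p,x\}$ and $\ell\in by$ (in the paper's notation $j\in pq$, $\ell\in bd$).
\begin{enumerate}
\item[(a)] $L\subseteq p^\perp\cap b^\perp\cap j^\perp\cap \ell^\perp$, so $L$ is fixed pointwise.
\item[(b)] In $\Res_\Delta(x)$ the induced map is $px\per bx\per px\per \ell x\per px$, since $jx=px$. In $\Res_\Delta(y)$ it is $py\per by\per jy\per by\per py$, since $\ell y=by$. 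Both are the identity by Step~1. The paper phrases this as: every plane through $px$ contains $j$, and every plane through $by$ contains $\ell$.
\item[(c)] The position of $\ell$ on $by$ is the free parameter. Let $i$ be the intersection of the lines $jm$ and $pm'$ in the plane $\langle p,M\rangle$, and set $\ell:=\proj_{by}(i)$. From $j\neq p,x$ and $m\neq m'$ one gets $i\not\perp b$ and $i\not\perp y$, hence $\ell\neq b,y$. Together with $j\neq x$ this gives the oppositions $b\equiv j$, $j\equiv\ell$, $\ell\equiv p$. Then $pm\mapsto bm\mapsto jm\mapsto \ell i\mapsto pi=pm'$.
\end{enumerate}
This fits your template with $q=b$, $p'=j$, $q'=\ell$. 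However, the point reached on the third line is $i\neq m'$, so your stricter target would not have led you to this choice.
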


\begin{proof}
Let $\alpha$ be an arbitrary apartment of $\Gamma$. 
Let $q$ and $d$ be two collinear points in $\alpha$. 
Let $u$ be the point in $\alpha$ collinear to $q$ but not to $d$, and let $n$ be the point in $\alpha$ collinear to $d$ but not to $q$. 

Our goal is to construct an automorphism that fixes $qd$ pointwise, stabilizes the lines that intersect $q$ or $d$, and moves the line $nu$ to a line $n'u'$ 
such that $n'\in nd\setminus\{n\}$ and $u'\in uq\setminus\{u\}$. 

Let $j$ be a point on $pq$ not equal to $p$ or $q$. 
The lines $ju$ and $pu'$ intersect in a point that we denote by $i$. 
Since $j \neq q$ and $u' \neq u$, we have $i \not\in uq$, and since $uq=b^{\perp} \cap \<p,u,q\>$, the points $b$ and $i$ are not collinear.

Furthermore, since $j \neq p$ and $u,u' \neq q$, we have $i \not\in pq$, and since $pq=d^{\perp} \cap \<p,u,q\>$, the points $d$ and $i$ are not collinear.
In particular, $i \neq j$.

Set $\ell := \proj_{bd} (i)$. Then $\ell \in bd \setminus \{b,d\}$.
Since $\ell$ is contained in the plane $\<b,d,q\>$, $\ell \not\perp p$, $\proj_{pq}(\ell)=q \neq j$, the points $\ell$ and $j$ are opposite.
For a visualization, see Figure~\ref{firstconstruction}.

\begin{center}
\begin{tikzpicture}
\begin{scope}[xscale=1.35, yscale=1.3]
\coordinate (q) at (1, 0, 0);  
\coordinate (b) at (0, 1, 0);  
\coordinate (n) at (-1, 0, 0); 
\coordinate (p) at (0, -1, 0); 
\coordinate (u) at (0, 0, 0.8);  
\coordinate (d) at (0, 0, -0.8); 

\draw[] (q) -- (b);
\draw[] (b) -- (n);
\draw[] (n) -- (p);
\draw[] (p) -- (q);
\draw[] (q) -- (u);
\draw[] (b) -- (u);
\draw[] (n) -- (u);
\draw[] (p) -- (u);
\draw[opacity=0.3] (q) -- (d);
\draw[opacity=0.3] (b) -- (d);
\draw[opacity=0.3] (n) -- (d);
\draw[opacity=0.3] (p) -- (d);

\coordinate (j) at ($ (p) + 0.3*(q) - 0.3*(p) $); 
\coordinate (l) at ($ (d) + 0.5*(b) - 0.5*(d) $); 
\coordinate (n') at ($ (n) + 0.5*(d) - 0.5*(n) $); 
\coordinate (u') at ($ (u) + 0.5*(q) - 0.5*(u) $); 

\draw[opacity=0.3] (n') -- (u');

\draw[name path=path1, opacity=0.3] (j) -- (u);
\draw[name path=path2, opacity=0.3] (p) -- (u');
    
\path [name intersections={of=path1 and path2, by=i}];
    
\coordinate (i) at (i);
\draw[opacity=0.3] (l) -- (i);
\end{scope}

\node at (q) [circle, fill, inner sep=1pt, label={[label distance=-1mm]above right:{\small \( q \)}}] {};
\node at (b) [circle, fill, inner sep=1pt, label={[label distance=-1mm]above:{\small \( b \)}}] {};
\node at (n) [circle, fill, inner sep=1pt, label={[label distance=-1mm]left:{\small \( n \)}}] {};
\node at (p) [circle, fill, inner sep=1pt, label={[label distance=-1mm]below:{\small \( p \)}}] {};
\node at (u) [circle, fill, inner sep=1pt, label={[label distance=-2.5mm]below left:{\small \( u \)}}] {};
\node at (d) [circle, fill, inner sep=1pt, label={[label distance=-2.5mm]above right:{\small \( d \)}}] {};

\node at (j) [circle, fill, inner sep=1pt, label={[label distance=-1mm]below right :{\small \( j \)}}] {};
\node at (l) [circle, fill, inner sep=1pt, label={[label distance=-1mm]above right:{\small \( \ell \)}}] {};
\node at (n') [circle, fill, inner sep=1pt, label={[label distance=-1mm]above:{\small \( n' \)}}] {};
\node at (u') [circle, fill, inner sep=1pt, label={[label distance=-1mm]below:{\small \( u' \)}}] {};
\node at (i) [circle, fill, opacity=0.3, inner sep=1pt, label={[label distance=1.5mm] below:{\small \( i \)}}] {};
\end{tikzpicture}
\captionof{figure}{The points and lines relevant for the construction of a root elation for the second kind of root of a generalized quadrangle inside a polar space of higher rank.}
\label{firstconstruction}
\end{center}

Thus, the points $p$ and $b$, $b$ and $j$, $j$ and $\ell$, and $\ell$ and $p$ are opposite. 
We define $\theta \colon \Res(p) \rightarrow \Res(p)$ as follows:
\[ \theta :=  \proj_{p}^{\ell} \circ \proj_{\ell}^{j} \circ \proj_{j}^{b} \circ \proj_{b}^{p}. \]
The map in $\Gamma$ defined by $x\mapsto \theta(px)\cap b^\perp$ is a collineation of $\Gamma$ that we will also denote by $\theta$ (and there is no danger of confusion). 

Since $qd$ is in $p^{\perp} \cap b^{\perp} \cap j^{\perp} \cap \ell^{\perp}$, every point of $qd$ is fixed by $\theta$.  

Let $\pi$ be an arbitrary plane through $pq$. Let $\pi'$ be the unique plane through $b$ intersecting $\pi$ in a line. Then $\pi'=\proj_{b}^{p}(\pi)$. Since $j\in\pi$, the unique plane through $j$ intersecting $\pi$ is $\pi$ again. This means that $\proj_{j}^{b} \circ \proj_{b}^{p}(\pi)=\pi$. The same token shows $\proj_{p}^{\ell} \circ \proj_{\ell}^{j}(\pi)=\pi$, and so $\theta(\pi)=\pi$. 

Now let $\pi$ be an arbitrary plane through $pd$. Set $\pi'=\proj_{b}^{p}(\pi)$. As in the previous paragraph, since $\ell\in \pi'$, we find $\proj_{\ell}^{j} \circ \proj_{j}^{b}(\pi')=\pi'$. Projecting $\pi'$ onto $\Res(p)$ yields $\pi$ again. Hence $\theta(\pi)=\pi$ again. 

Intersecting with $\Gamma$, we already see that $\theta$ is a root elation fixing all points on $dq$ and all lines through $d$ and $q$. There remains to show that $\theta$ maps $un$ to $u'n'$.

The line $pu$ maps to $bu$ under the first projection and then to $ju$ under the second projection. Since $i$ is the unique point of $ju$ that $\ell$ is collinear to, the line $ju$ maps to the line $i\ell$ under the third projection and then to $ip$ under the fourth projection. Since $i$ is on $pu'$, we have $pu'=pi$, and with that $\theta(u)=u'$. Projecting preserves incidence, and therefore the unique point on $nd$ that $u'$ is collinear to has to be the point $\theta(n)=n'$. Hence the line $nu$ moves to $n'u'$ under $\theta$.
\end{proof}

\begin{observation}\label{observation1} \em
If we change the position of $\ell$ in our construction and define it to be the projection of the intersection point of $pv'$ and $jv$ onto $bn$, then 
$$\proj_{p}^{\ell} \circ \proj_{\ell}^{j} \circ \proj_{j}^{b} \circ \proj_{b}^{p}$$ 
is an automorphism that stabilizes all planes through $pq$ or $pn$ and moves the point $v \neq u$ on $uq$ to the point $v' \neq u$ on $uq$. For a visualization, see Figure~\ref{secondconstruction}.

\begin{center}
\begin{tikzpicture}
\begin{scope}[xscale=1.35, yscale=1.3]
\coordinate (q) at (1, 0, 0);  
\coordinate (b) at (0, 1, 0);  
\coordinate (n) at (-1, 0, 0); 
\coordinate (p) at (0, -1, 0); 
\coordinate (u) at (0, 0, 0.8);  
\coordinate (d) at (0, 0, -0.8); 

\draw[] (q) -- (b);
\draw[] (b) -- (n);
\draw[] (n) -- (p);
\draw[] (p) -- (q);
\draw[] (q) -- (u);
\draw[] (b) -- (u);
\draw[] (n) -- (u);
\draw[] (p) -- (u);
\draw[opacity=0.3] (q) -- (d);
\draw[opacity=0.3] (b) -- (d);
\draw[opacity=0.3] (n) -- (d);
\draw[opacity=0.3] (p) -- (d);

\coordinate (j) at ($ (p) + 0.3*(q) - 0.3*(p) $); 
\coordinate (l) at ($ (n) + 0.5*(b) - 0.5*(n) $); 
\coordinate (v) at ($ (u) + 0.4*(q) - 0.4*(u) $); 
\coordinate (v') at ($ (u) + 0.6*(q) - 0.6*(u) $); 

\draw[name path=path1, opacity=0.3] (j) -- (v);
\draw[name path=path2, opacity=0.3] (p) -- (v');
    
\path [name intersections={of=path1 and path2, by=i}];
    
\coordinate (i) at (i);
\draw[opacity=0.3] (l) -- (i);
\end{scope}

\node at (q) [circle, fill, inner sep=1pt, label={[label distance=-1mm]above right:{\small \( q \)}}] {};
\node at (b) [circle, fill, inner sep=1pt, label={[label distance=-1mm]above:{\small \( b \)}}] {};
\node at (n) [circle, fill, inner sep=1pt, label={[label distance=-1mm]left:{\small \( n \)}}] {};
\node at (p) [circle, fill, inner sep=1pt, label={[label distance=-1mm]below:{\small \( p \)}}] {};
\node at (u) [circle, fill, inner sep=1pt, label={[label distance=-2.5mm]below left:{\small \( u \)}}] {};
\node at (d) [circle, fill, inner sep=1pt, label={[label distance=-2.5mm]above right:{\small \( d \)}}] {};

\node at (j) [circle, fill, inner sep=1pt, label={[label distance=-1mm]below right :{\small \( j \)}}] {};
\node at (l) [circle, fill, inner sep=1pt, label={[label distance=-1mm]above left:{\small \( \ell \)}}] {};
\node at (v) [circle, fill, inner sep=1pt, label={[label distance=-1mm]below:{\small \( v \)}}] {};
\node at (v') [circle, fill, inner sep=1pt, label={[label distance=-1mm]below:{\small \( v' \)}}] {};
\node at (i) [circle, fill, opacity=0.3, inner sep=1pt, label={[label distance=1.5mm] below:{\small \(  \)}}] {};
\end{tikzpicture}
\end{center}
\captionof{figure}{The changed positions of the points as described in Observation \ref{observation1}.}
\label{secondconstruction}
\end{observation}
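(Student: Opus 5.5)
The statement here is Observation~\ref{observation1}, so I will re-run the proof of Lemma~\ref{GQFirst} with the roles of the relevant lines changed. The new fixed data are the lines $pq$ and $pn$ in $\Res(p)$, that is, the points $q$ and $n$ of $\Gamma$. Since $n\perp q$ fails in $\alpha$ ($n$ is collinear to $d$ but not to $q$), the root is now of the first kind: the interior is centered at $u$, which is collinear to both $q$ and $n$.

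\textbf{Step 1: opposition of consecutive points.} Fix $j\in pq\setminus\{p,q\}$ and $v\neq v'$ on $uq$. Let $i:=jv\cap pv'$; both lines lie in the plane $\<p,u,q\>$.
\begin{itemize}
\item Since $v'\neq q$ and $j\neq q$, the point $i$ does not lie on $uq=b^\perp\cap\<p,u,q\>$, so $i\not\perp b$.
\item Similarly $i\notin pu$, and $pu=n^\perp\cap\<p,u,q\>$, so $i\not\perp n$.
\end{itemize}
Set $\ell:=\proj_{bn}(i)$. Then $\ell\in bn\setminus\{b,n\}$. Moreover $\ell$ lies in the plane $\<b,n,u\>$, so $\ell\not\perp p$. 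Also $\proj_{pq}(\ell)=p^{\perp}$-related data give $\ell^\perp\cap pq$ equal to a point different from $j$; this must be checked using that $\ell\not\perp q$, since $q\not\perp n$ and $q\perp b$. Hence $\ell$ and $j$ are opposite. Together with $p\equiv b$, $b\equiv j$ and $\ell\equiv p$, this makes $\theta=p\per b\per j\per \ell\per p$ a well-defined self-projectivity.

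\textbf{Step 2: stabilized planes.}
\begin{itemize}
\item For a plane $\pi\ni pq$: since $j\in\pi$, the projections to $b$ and back to $j$ return $\pi$. The $\ell$-leg is then handled by symmetry.
\item For a plane $\pi\ni pn$: its image $\pi'=\proj_b^p(\pi)$ contains $bn$, hence contains $\ell$, so $\proj_\ell^j\circ\proj_j^b(\pi')=\pi'$, and projecting back to $p$ gives $\pi$.
\end{itemize}
The main obstacle is the mismatch: $j$ sits on $pq$ but $\ell$ sits on $bn$, whereas in Lemma~\ref{GQFirst} both sat in the fixed structure. I would resolve it by tracking every plane through $pq$ or $pn$ via the intermediate line spanned with $u$. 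The key facts are that $u\in p^\perp\cap b^\perp\cap j^\perp\cap\ell^\perp$ (since $u\perp n$ and $u\perp b$), and that $q\perp j$ and $n\perp\ell$. So the lines $pu$, $pq$ and $pn$ are each fixed: $pu$ because every projection fixes $u$; $pq$ and $pn$ because $q$ (respectively $n$) is collinear to all four centers except possibly one, and there the plane argument applies.

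\textbf{Step 3: moving $v$ to $v'$.} The chain of images is
\[
pv\mapsto bv\mapsto jv\mapsto \ell i\mapsto pi=pv',
\]
where the third step uses that $i$ is the unique point of $jv$ collinear to $\ell$. Hence $\theta(v)=v'$.

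Finally, restrict $\theta$ to $\Gamma$ via $x\mapsto\theta(px)\cap b^\perp$, exactly as in Lemma~\ref{GQFirst}, and conclude that it is the claimed automorphism.
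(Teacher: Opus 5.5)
There is a genuine gap in Step~1. Your overall plan matches what the paper intends: rerun the proof of Lemma~\ref{GQFirst} with $d$ replaced by $n$ and $u,u'$ replaced by $v,v'$. Steps~2 and~3 go through as written. The gap sits at the one place where the adaptation is not verbatim, namely the opposition $\ell\equiv j$.

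In Lemma~\ref{GQFirst} this opposition was immediate: $\ell\in bd$ and $d\perp q$ give $\ell\perp q$, so $\proj_{pq}(\ell)=q\neq j$. Now $\ell\in bn$ with $n\not\perp q$, so $\ell\not\perp q$ and $\ell\not\perp p$. That only tells you $\ell^\perp\cap pq$ is some point of $pq\setminus\{p,q\}$, which is exactly where $j$ lies. So your appeal to $\ell\not\perp q$ proves nothing; it is precisely what makes the paper's earlier argument unavailable.

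The opposition is needed for $\proj_\ell^j$ to be defined at all, and for the uniqueness of $i$ on $jv$ in Step~3. It does hold, but the proof must use the definition of $\ell$ and the hypothesis $v\neq u$:
\begin{itemize}
\item Since $v'\neq q$, we have $pv'\cap pq=\{p\}$, so $i\neq j$.
\item As $\ell\perp i$, assuming $\ell\perp j$ would make $\ell$ collinear to the whole line $jv$, in particular to $v$.
\item But $v\in uq\subseteq b^\perp$, and $v\not\perp n$ because $n^\perp\cap uq=\{u\}$ and $v\neq u$. Hence $v^\perp\cap bn=\{b\}$.
\item Since $\ell\neq b$, this is a contradiction, so $\ell\equiv j$.
\end{itemize}

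Some smaller corrections follow.
\begin{itemize}
\item Your opening claim that $\theta$ is a root elation of the first kind centered at $u$ is false. Such an elation would fix $uq$ pointwise. In fact $\theta$ stabilizes all lines of $\Gamma$ through the opposite points $q$ and $n$, so it fixes $u$ and $q$ on $uq$ while moving $v$ to $v'\neq v$. On $uq$ it therefore acts like a homology. The statement only claims an automorphism, so simply drop that remark.
\item You also need $v,v'\neq q$. For $v=q$ you get $i=p$ and $\ell=n\perp p$; for $v'=q$ you get $i=j$ and $\ell\perp j$.
\item The fact $i\notin uq$ follows from $v\neq v'$ together with $j\notin uq$, not from $v'\neq q$.
\item The ``mismatch'' you describe in Step~2 is illusory. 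The point $\ell\in bn$ plays exactly the role of $\ell\in bd$ in Lemma~\ref{GQFirst}. Your two bullet points already show that all planes through $pq$ and through $pn$ are stabilized, so the paragraph tracking $u$ can be omitted.
\end{itemize}
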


\begin{observation}\label{observation2} \em
If we change the position of $\ell$ in our construction and define it to be on the line $bq$, but not equal to $q$, then 
$$\proj_{p}^{\ell} \circ \proj_{\ell}^{j} \circ \proj_{j}^{b} \circ \proj_{b}^{p}$$ 
is an automorphism that fixes every plane containing $pq$ and every line that is contained in either $\langle p,u,q \rangle$ or $\langle p,d,q \rangle$. For a visualization, see Figure~\ref{thirdconstruction}.

\begin{center}
\begin{tikzpicture}
\begin{scope}[xscale=1.35, yscale=1.3]
\coordinate (q) at (1, 0, 0);  
\coordinate (b) at (0, 1, 0);  
\coordinate (n) at (-1, 0, 0); 
\coordinate (p) at (0, -1, 0); 
\coordinate (u) at (0, 0, 0.8);  
\coordinate (d) at (0, 0, -0.8); 

\draw[] (q) -- (b);
\draw[] (b) -- (n);
\draw[] (n) -- (p);
\draw[] (p) -- (q);
\draw[] (q) -- (u);
\draw[] (b) -- (u);
\draw[] (n) -- (u);
\draw[] (p) -- (u);
\draw[opacity=0.3] (q) -- (d);
\draw[opacity=0.3] (b) -- (d);
\draw[opacity=0.3] (n) -- (d);
\draw[opacity=0.3] (p) -- (d);

\coordinate (j) at ($ (p) + 0.3*(q) - 0.3*(p) $); 
\coordinate (l) at ($ (q) + 0.3*(b) - 0.3*(q) $); 

\end{scope}

\node at (q) [circle, fill, inner sep=1pt, label={[label distance=-1mm]above right:{\small \( q \)}}] {};
\node at (b) [circle, fill, inner sep=1pt, label={[label distance=-1mm]above:{\small \( b \)}}] {};
\node at (n) [circle, fill, inner sep=1pt, label={[label distance=-1mm]left:{\small \( n \)}}] {};
\node at (p) [circle, fill, inner sep=1pt, label={[label distance=-1mm]below:{\small \( p \)}}] {};
\node at (u) [circle, fill, inner sep=1pt, label={[label distance=-2.5mm]below left:{\small \( u \)}}] {};
\node at (d) [circle, fill, inner sep=1pt, label={[label distance=-2.5mm]above right:{\small \( d \)}}] {};

\node at (j) [circle, fill, inner sep=1pt, label={[label distance=-1mm]below right :{\small \( j \)}}] {};
\node at (l) [circle, fill, inner sep=1pt, label={[label distance=-1mm]above right:{\small \( \ell \)}}] {};
\end{tikzpicture}
\captionof{figure}{The changed positions of the points as described in Observation \ref{observation2}.}
\label{thirdconstruction}
\end{center}

If $[ \proj_{[\proj_{bn} (j), j]}(\ell), \ell ]$ intersects $pn$, then $\theta$ is the identity. In that case, the lines $pn$, $[\proj_{bn} (j), j]$, $bq$, $pq$, $[\proj_{[\proj_{bn} (j), j]}(\ell), \ell ]$, and $bn$ form a grid.
\end{observation}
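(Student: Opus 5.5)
The plan is to re-run the argument of Lemma~\ref{GQFirst} with the new position of $\ell$, and to check directly on points of $\Gamma$ what the projectivity
$$\theta=\proj_{p}^{\ell}\circ\proj_{\ell}^{j}\circ\proj_{j}^{b}\circ\proj_{b}^{p}$$
does.

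\textbf{Step 1: $\theta$ is well defined.} We first check that consecutive points are opposite.
\begin{itemize}
\item Since $\ell\in bq\setminus\{q\}$ and $p^\perp\cap bq=\{q\}$ (as $p\equiv b$), the points $\ell$ and $p$ are opposite. We also take $\ell\neq b$.
\item Since $j\in pq\setminus\{p,q\}$, we have $b\not\perp j$. Hence $j^\perp\cap bq=\{q\}$, and so $\ell\equiv j$.
\end{itemize}
So $\theta$ is a self-projectivity of length~$4$ of $\Res(p)$, and as before it induces a collineation of $\Gamma$ via $x\mapsto\theta(px)\cap b^\perp$.

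\textbf{Step 2: planes through $pq$ are stabilized.} Let $\pi$ be a plane through $pq$. Then both $j$ and $p$ lie in $\pi$. The argument of Lemma~\ref{GQFirst} gives $\proj_j^b\circ\proj_b^p(\pi)=\pi$, and by the same token $\proj_p^\ell\circ\proj_\ell^j(\pi)=\pi$. This part does not depend on where $\ell$ lies.

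\textbf{Step 3: lines in $\langle p,u,q\rangle$ and $\langle p,d,q\rangle$.} Take $x\in uq$ and follow the line $px$ through the four projections.
\begin{itemize}
\item $px\mapsto bx$, because $x\in b^\perp$.
\item $bx\mapsto jx$, because $j,x$ lie in the singular plane $\langle p,u,q\rangle$, so $x\perp j$.
\item $jx\mapsto \ell x$, because $\ell\in bq\subseteq\langle b,u,q\rangle$, so $x\perp\ell$.
\item $\ell x\mapsto px$.
\end{itemize}
Thus every line $px$ with $x\in uq$ is fixed. Since every line of $\langle p,u,q\rangle$ through $p$ is of this form, and $\langle p,u,q\rangle$ itself is fixed by Step~2, all lines of $\langle p,u,q\rangle$ are fixed.

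The same computation with $d$ in place of $u$ uses $\ell\in\langle b,d,q\rangle$ and gives the claim for $\langle p,d,q\rangle$. In $\Gamma$ this says that $\theta$ fixes $uq$ and $dq$ pointwise and fixes every line through $q$.

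\textbf{Step 4: the identity case and the grid.} Trace $pn$ through the projections. Put $m:=\proj_{bn}(j)$ and $r:=\proj_{jm}(\ell)$. Then
$$pn\mapsto bn\mapsto jm\mapsto \ell r\mapsto \text{the line through } p \text{ meeting } \ell r.$$
If $\ell r$ meets $pn$, this last line is $pn$ again, so $n$ is fixed.

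The grid is then read off from the incidences:
\begin{itemize}
\item $pn$ meets $pq$ and $bn$;
\item $jm$ meets $pq$ (at $j$), $bn$ (at $m$) and $\ell r$ (at $r$);
\item $bq$ meets $pq$ (at $q$), $bn$ (at $b$) and $\ell r$ (at $\ell$);
\item $pn$ meets $\ell r$ by hypothesis.
\end{itemize}
Hence $\{pn,jm,bq\}$ and $\{pq,\ell r,bn\}$ form a $3\times3$ grid.

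\textbf{Main obstacle: showing $\theta$ is the identity once $n$ is fixed.} At that point $\theta$ fixes the following in $\Gamma$: every point of $uq$ and $dq$, every line through $q$, and the point $n$ opposite $q$.

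I would first show that $\theta$ fixes every point $y$ on the lines through $q$. Here I try using projections onto $n$: the point $y$ is the unique point of its line collinear with a suitable point on a line through $n$.

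From there, the fixed points propagate to all points opposite $q$ via the standard generalized quadrangle argument. Every point opposite $q$ is determined by its projections onto two lines through $q$, which are fixed pointwise. The delicate part is securing enough fixed points on the lines through $n$ to start this propagation. If that fails, I would fall back on showing that the fixed structure contains an apartment together with all lines through two of its points, which for a collineation of a thick generalized quadrangle forces the identity.
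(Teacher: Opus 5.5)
Your Steps 1--3 are correct, and so is the tracing of $pn$ in Step 4 showing that $n$ is fixed when $\ell r$ meets $pn$. This is the re-run of the computation in Lemma~\ref{GQFirst} that the paper has in mind; it states the observation without proof. The genuine gap is the conclusion $\theta=\mathrm{id}$, which your proposal leaves open, and both routes you sketch fail as stated.

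First, a point $z\equiv q$ is \emph{not} determined by its projections $z_1,z_2$ onto two lines through $q$. Every point of $\{z_1,z_2\}^\perp\setminus\{q\}$ is opposite $q$ and has the same two projections, and in a thick quadrangle there are at least two such points.

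Second, fixing an apartment together with all lines through two of its points does not force the identity. In $W(\mathbb{F})$ with $\mathrm{char}\,\mathbb{F}\neq 2$, take the involution acting as $1,-1,-1,1$ on a symplectic basis $e_1,e_2,f_2,f_1$. It fixes the apartment $\langle e_1\rangle,\langle e_2\rangle,\langle f_1\rangle,\langle f_2\rangle$ and every line through each of these four points, yet it maps $\langle e_1+e_2\rangle$ to $\langle e_1-e_2\rangle$.

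What you must exploit is that $\theta$ fixes \emph{all points} of $uq$ together with all lines through $q\in uq$. In other words, it fixes every chamber adjacent to the chamber $(q,uq)$ of the fixed apartment $\{q,u,n,d\}$. Tits's rigidity theorem, which the paper itself uses in Proposition~\ref{MoufangDefn3}, then gives $\theta=\mathrm{id}$ on $\Gamma$, hence on $\Res(p)$. Alternatively, you can argue by hand:
\begin{itemize}
\item Every line through $n$ is fixed, being the unique line through $n$ meeting some (fixed) line through $q$.
\item For $x\in uq\setminus\{u,q\}$ and $x'\in dq\setminus\{d,q\}$ we have $x\equiv n\equiv x'$. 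So every line through $x$ or $x'$ is fixed, being the unique line through that point meeting a given line through $n$.
\item Since $u\equiv x'$ and $d\equiv x$, the same argument shows every line through $u$ and every line through $d$ is fixed.
\item A point $z\equiv q$ is the intersection of the two distinct fixed lines $z\,\proj_{uq}(z)$ and $z\,\proj_{dq}(z)$, so $z$ is fixed.
\item A point $y\in q^\perp\setminus\{q\}$ is the intersection of the fixed line $yq$ with another line through $y$. All other points of that line are opposite $q$ and hence fixed, so $y$ is fixed.
\end{itemize}

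Two smaller points. For the grid, you should also check that lines in the same family are pairwise disjoint; this holds when $\ell\neq b$. The case $\ell=b$, which the statement allows, gives $\theta=\mathrm{id}$ trivially, but then $r=m$ and $\ell r=bn$, so the six lines do not form a grid.
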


The construction in Proposition 8.1 of \cite{ProjGrNonSimpLaced} is a direct generalization of, and based on Observations \ref{observation1} and \ref{observation2}.


\subsection{First kind of root}

\begin{lem}\label{GQSecond}
Let ${\Delta}$ be a polar space of rank $3$, and let $\Gamma$ be the generalized quadrangle obtained from two opposite points $p,b$ of $\Delta$ by considering $p^\perp\cap b^\perp$. Then all roots of the first kind of $\Gamma$ are Moufang.
Furthermore, every root elation of $\Gamma$ fixing the interior of a root of the first kind can be written as a self-projectivity of length $4$.
\end{lem}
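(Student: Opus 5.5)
The plan is to mimic the proof of Lemma~\ref{GQFirst}, with the roles of the points dualised, and to use the configuration of Observation~\ref{observation2}.

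\textbf{Setting up the root.} Fix an apartment of $\Gamma$ with points $q,u,n,d$, where $q\perp u$, $u\perp n$, $n\perp d$ and $d\perp q$. Take the root of the first kind obtained by removing $n$. Its interior consists of $q$ together with the lines $qu$ and $qd$. A root elation must therefore:
\begin{itemize}
\item fix every point of $qu$ and of $qd$;
\item stabilise every line of $\Gamma$ through $q$;
\item map $n$ to a prescribed point $n'\neq n$ that is collinear with $u$ and $d$ and opposite $q$ in $\Gamma$.
\end{itemize}
In $\Delta$, the lines of $\Gamma$ through $q$ correspond to the planes through $pq$. The points of $qu$ and $qd$ correspond to the lines of $\Res(p)$ lying in $\langle p,u,q\rangle$ and in $\langle p,d,q\rangle$.

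\textbf{The candidate map.} Choose $j\in pq\setminus\{p,q\}$ and $\ell\in bq\setminus\{b,q\}$. Then $p,b,j,\ell$ are cyclically opposite: $j\not\perp b$ and $\ell\not\perp p$, while the pairs $b,j$ and $j,\ell$ are opposite because the projections of $b$ and $\ell$ onto $pq$ equal $q\neq j$. Set
\[
\theta:=p\per b\per j\per \ell\per p .
\]
Let $\pi$ be a plane through $pq$. Its projection to $b$ is a plane $\pi'$ through $bq$. Since $j\in\pi$, projecting $\pi'$ to $j$ returns $\pi$. Since $\ell\in\pi'$, projecting $\pi$ to $\ell$ returns $\pi'$, and projecting $\pi'$ to $p$ returns $\pi$. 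Hence $\theta$ stabilises every plane through $pq$, that is, every line of $\Gamma$ through $q$.

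Next, every point of $qu$ lies in $p^\perp\cap b^\perp\cap j^\perp\cap\ell^\perp$, because $j\in pq$, $\ell\in bq$ and $qu$ lies in both $\langle p,u,q\rangle$ and $\langle b,u,q\rangle$. So $qu$ is fixed pointwise, and the same argument applies to $qd$; this is exactly the assertion of Observation~\ref{observation2}. Consequently $\theta$ fixes $u$ and $d$, so $\theta(n)$ is a point collinear with both $u$ and $d$. Thus $\theta$ is a root elation for the given root, and it is written as a self-projectivity of length~$4$.

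\textbf{Transitivity.} It remains to choose $j$ and $\ell$ so that $\theta(n)=n'$, and I expect this to be the main obstacle. Tracing the line $pn$ through the projectivity:
\begin{itemize}
\item $pn$ goes to $bn$;
\item $bn$ goes to $jm$, where $m:=\proj_{bn}(j)$;
\item $jm$ goes to $\ell k$, where $k:=\proj_{jm}(\ell)$;
\item $\ell k$ goes to $px$, where $x:=\proj_{\ell k}(p)$.
\end{itemize}
So we need $x\in pn'$.

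I would fix $j$ first and determine $\ell$ by running the chain backwards, as in Lemma~\ref{GQFirst} where $\ell$ was defined as a projection of an auxiliary intersection point $i$.
\begin{enumerate}
\item Consider the plane $\langle p,n,n'\rangle$. It is singular because $n$ and $n'$ both lie in $u^\perp\cap d^\perp$ inside the quadrangle $\Gamma$ and are collinear there.
\item Intersect the line $jm$ with $(pn')^\perp$, or alternatively choose the point $i$ of $pn'$ lying in a plane with $jm$. This produces a candidate $k\in jm$.
\item Set $\ell:=\proj_{bq}(k)$.
\item Check that $\proj_{\ell k}(p)$ lies on $pn'$, and check the non-degeneracies $\ell\neq b,q$ and $k\neq j,m$. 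These follow from $n'\neq n$ and $n'\not\perp q$, in the same way as the claims $i\notin uq$ and $i\notin pq$ in Lemma~\ref{GQFirst}.
\end{enumerate}

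If this direct choice fails for some configuration, the fallback is to let $j$ vary as well, or to compose two maps of this type. The latter loses the length-$4$ statement, so I would try hard to avoid it. As a final consistency check, once $\theta(n)=n'$ is established, the image of the line $nu$ is forced to be $n'u$, and the image of $nd$ is forced to be $n'd$. This completes the apartment and proves transitivity.
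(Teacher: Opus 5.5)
Your check that $\theta=p\per b\per j\per\ell\per p$, with $j\in pq\setminus\{p,q\}$ and $\ell\in bq\setminus\{b,q\}$, is a root elation for the root centred at $q$ is fine; it is Observation~\ref{observation2}. The gap is in the transitivity step, and it cannot be closed inside this family, because $\theta$ can be trivial for \emph{every} choice of $j$ and $\ell$.

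Take $\Delta$ orthogonal, e.g.\ $Q(6,F)$. The projective $3$-space spanned by the opposite lines $pq$ and $bn$ meets the quadric in a grid, with $pq,bn$ in one regulus and $pn,bq$ in the other. The line $jm$, where $m=\proj_{bn}(j)$, belongs to the second regulus. The line of the first regulus through $\ell$ meets $jm$ in a point collinear with $\ell$, so that point is $k$; this line also meets $pn$. Hence $\proj_{\ell k}(p)\in pn$ and $\theta(n)=n$ for all $j,\ell$. This is exactly the grid situation noted at the end of Observation~\ref{observation2}.

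The same thing shows up in coordinates. Take $q=\langle e_1\rangle$, $p=\langle e_2\rangle$, $b=\langle f_2\rangle$, $j=\langle e_2+\alpha e_1\rangle$, $\ell=\langle f_2+\beta e_1\rangle$ in a hyperbolic basis for a form with $B(y,x)=\epsilon B(x,y)$. Then $\theta$ acts on $\{p,b\}^\perp$ as $w\mapsto w+(1-\epsilon)\alpha\beta\,B(e_1,w)\,e_1$. This is trivial for quadrics and for symplectic spaces in characteristic~$2$.

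So your step~4 claim, that $\proj_{\ell k}(p)\in pn'$ follows from $n'\neq n$, is false in general. Neither varying $j$ nor composing such maps helps. Step~1 is also wrong: $n\neq n'$ both lie in $\{u,d\}^\perp$ inside a generalized quadrangle, so they are opposite, and $\langle p,n,n'\rangle$ is not a singular plane.

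The missing idea is to take $\ell$ off the line $bq$. In your labelling the paper's construction is as follows. Pick $j'\in bn\setminus\{b,n\}$, and put $j''=\proj_{pn'}(j')$, $\ell=\proj_{j'j''}(q)$ and $j=\proj_{pq}(j')$.

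Since $j'$ and $j''$ are both collinear with $u$ and $d$, the point $\ell$ is collinear with every point of $qu\cup qd$. Together with $j\in pq$, this keeps $\theta$ a root elation of the required kind. Because $\ell\perp j'$, the chain becomes $pn\mapsto bn\mapsto jj'\mapsto \ell j'\mapsto pj''=pn'$.

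Here $\ell\not\perp b$, since $b\not\perp j''$. So $q\ell$ is a line of $\{u,d\}^\perp$ through $q$ different from both $pq$ and $bq$. That is precisely the freedom your choice $\ell\in bq$ throws away. It remains to verify $\ell\not\perp p$ and $\ell\not\perp j$. The paper does this by determining which points of $j'j''$ are collinear with $p$ and with $j$.
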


\begin{proof}
As in the previous proof, let $\alpha$ be an apartment of $\Gamma$ defined by a polar frame $\{d,q,u,n\}$, where $d$ is collinear to $q$ and $n$, and opposite $u$. Let $\alpha'$ be another apartment of $\Gamma$ spanned by a polar frame $\{d,q,u',n\}$, where $u'$ is a point collinear to $n$ and $q$, and opposite $d$ and $u$. In the following, we will construct an automorphism of $\Gamma$ that fixes $dq$ and $dn$ pointwise, moves $u$ to $u'$, and fixes all lines through $d$.

Pick any point on $bu$ not equal to $b$ or $u$ and denote it by $j'$. Since $b$ and $p$, and $u$ and $u'$ are opposite, and $u\perp p\perp u'\perp b\perp u$, the lines $bu$ and $pu'$ are opposite. Let $j''$ be the projection of $j'$ onto $pu'$. The line $bd$ is opposite $j'j''$, because $j'$ is not collinear to $d$, and if $b$ were collinear to $j''$, then $b$ would be collinear to all points on $u'p$, a contradiction. Let $\ell$ be the projection of $d$ onto $j'j''$. Arguing as above, we see that $bu$ and $pd$ are opposite. Let $j$ be the projection of $j'$ onto $pd$. Since $j' \perp n$ and $j'' \perp n$, we conclude $\ell \perp n$. In the same way, $\ell \perp q$. 

Next, we will show that $\ell$ is opposite both $p$ and $j$. The point $p$ is collinear to $j''$ but not to $j'$, since $pd$ and $bu$ are opposite and $p$ is already collinear to $u$. In particular, $p$ is not collinear to all points of $j'j''$ and not collinear to $\ell$. Suppose that $j$ were collinear to $j''$. Then $j$ would be collinear to all of $pu'$, and $u'$ would be collinear to $p$ and $j$, so to all of $pd$. That contradicts the fact that $u'$ and $d$ are opposite. That means $j'$ is the unique point on $j'j''$ that $j$ is collinear to. In particular, $j$ is not collinear to $\ell$. For a visualization, see Figure~\ref{fourthconstruction}.

\begin{center}
\begin{tikzpicture}
\begin{scope}[xscale=1.65, yscale=1.6]
\coordinate (q) at (1, 0, 0);  
\coordinate (b) at (0, 1, 0);  
\coordinate (n) at (-1, 0, 0); 
\coordinate (p) at (0, -1, 0); 
\coordinate (u) at (0, 0, 0.8);  
\coordinate (d) at (0, 0, -0.8); 

\draw[] (q) -- (b);
\draw[] (b) -- (n);
\draw[] (n) -- (p);
\draw[] (p) -- (q);
\draw[] (q) -- (u);
\draw[] (b) -- (u);
\draw[] (n) -- (u);
\draw[] (p) -- (u);
\draw[opacity=0.3] (q) -- (d);
\draw[opacity=0.3] (b) -- (d);
\draw[opacity=0.3] (n) -- (d);
\draw[opacity=0.3] (p) -- (d);

\coordinate (u') at (0, 0, 2.5);  
\draw[] (p) -- (u');
\draw[] (n) -- (u');
\draw[] (b) -- (u');
\draw[] (q) -- (u');

\coordinate (j) at ($ (p) + 0.8*(d) - 0.8*(p) $); 
\coordinate (j') at ($ (u) + 0.5*(b) - 0.5*(u) $); 
\coordinate (j'') at ($ (u') + 0.8*(p) - 0.8*(u') $); 
\coordinate (l) at ($ (j') + 1.2*(j'') - 1.2*(j') $); 
\draw[opacity=0.3] (j) -- (j') -- (l);

\end{scope}

\node at (q) [circle, fill, inner sep=1pt, label={[label distance=-1mm]above right:{\small \( q \)}}] {};
\node at (b) [circle, fill, inner sep=1pt, label={[label distance=-1mm]above:{\small \( b \)}}] {};
\node at (n) [circle, fill, inner sep=1pt, label={[label distance=-1mm]left:{\small \( n \)}}] {};
\node at (p) [circle, fill, inner sep=1pt, label={[label distance=-1mm]below:{\small \( p \)}}] {};
\node at (u) [circle, fill, inner sep=1pt, label={[label distance=-2.5mm]below left:{\small \( u \)}}] {};
\node at (d) [circle, fill, inner sep=1pt, label={[label distance=-2.5mm]above right:{\small \( d \)}}] {};

\node at (u') [circle, fill, inner sep=1pt, label={[label distance=-1mm]below :{\small \( u' \)}}] {};

\node at (j) [circle, fill, inner sep=1pt, label={[label distance=-1mm] right :{\small \( j \)}}] {};
\node at (j') [circle, fill, inner sep=1pt, label={[label distance=-1mm] above right :{\small \( j' \)}}] {};
\node at (j'') [circle, fill, inner sep=1pt, label={[label distance=-1mm] below left:{\small \( j'' \)}}] {};
\node at (l) [circle, fill, inner sep=1pt, label={[label distance=-1mm]below:{\small \( \ell \)}}] {};
\end{tikzpicture}
\captionof{figure}{The points and lines relevant for the construction of a root elation for the first kind of root of a generalized quadrangle inside a polar space of higher rank.}
\label{fourthconstruction}
\end{center}

Since $p\equiv b\equiv j\equiv\ell\equiv p$, we can project in each case from one residue into the other. 

We define $\theta \colon \Res(p) \rightarrow \Res(p)$ as follows:
\[ \theta :=  \proj_{p}^{\ell} \circ \proj_{\ell}^{j} \circ \proj_{j}^{b} \circ \proj_{b}^{p} \]
%
From the discussion above, it follows that both $j$ and $\ell$ are collinear to all points of the lines $dq$ and $dn$. This readily implies that $\theta$ fixes every point on $dq\cup dn$. Also, since $j$ is contained in each plane through the line $pd$, we find that each plane through $pd$ is stabilized by $\theta$. Hence $\theta$ induces a root elation in $\Gamma$. We now check that $\theta(u)=u'$. 

We have $bu=\proj_b^p(pu)$. Since $j\perp j'\in bu$, we have $jj'=\proj_j^b(bu)$. Since $j'\perp \ell$, we have $j'\ell=\proj_\ell^j(jj')$. Finally, since $p\perp j''\in j'\ell$, we have $pj''=\proj_p^\ell(j'\ell)$. But $j''\in pu'$, hence $pj''=pu'$. It follows that $\theta(pu)=pu'$, and so $\theta(u)=u'$. This proves the lemma.
\end{proof}

\begin{coro}\label{GQMoufang}
Let ${\Delta}$ be a polar space of rank at least $3$, and let $\Gamma$ be the generalized quadrangle obtained from two opposite singular subspaces $U,V$ of $\Delta$ of dimension $n-3$ by considering $U^\perp\cap V^\perp$. Then $\Gamma$ is a Moufang quadrangle. 
\end{coro}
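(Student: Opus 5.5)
The plan is to reduce the corollary to Lemmas~\ref{GQFirst} and~\ref{GQSecond}. This requires realizing $\Gamma=U^\perp\cap V^\perp$ as $p^\perp\cap b^\perp$ inside a rank~$3$ polar space, as sketched at the beginning of Section~\ref{GQ}.

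\textbf{Step 1: find the rank~$3$ space.} If $n=3$, then $U$ and $V$ are opposite points and we may take $p=U$, $b=V$ and $\Delta$ itself. If $n\ge 4$, choose a hyperplane $U_0$ of $U$ and let $V_0:=\proj_V(U_0)$.
\begin{itemize}
\item By the dimension formula $\dim V-\dim\proj_V(U_0)=\dim U_0-\dim\proj_{U_0}(V)$, and since $\proj_{U_0}(V)=\emptyset$, the subspace $V_0$ is a hyperplane of $V$. Moreover, $U_0$ and $V_0$ are opposite.
\item Then $\Delta':=U_0^\perp\cap V_0^\perp$ is a polar space isomorphic to $\Res_\Delta(U_0)$, which has rank $n-1-(n-4)=3$. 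It is thick because $\Delta$ is.
\item Let $p$ be the unique point of $U$ not in $U_0$ that lies in $\Delta'$. Concretely, $p:=U\cap V_0^\perp$, which is a single point by the same dimension count. Define $b:=V\cap U_0^\perp$ symmetrically.
\item $p$ and $b$ are opposite in $\Delta'$, because $U$ and $V$ are opposite.
\item Finally, $U^\perp=\<U_0,p\>^\perp=U_0^\perp\cap p^\perp$, and similarly $V^\perp=V_0^\perp\cap b^\perp$. Hence $\Gamma=p^\perp\cap b^\perp$ computed inside $\Delta'$.
\end{itemize}
An alternative would be to induct on $n$ by passing to the residue of one point of $U$ and its projection in $V$, using that opposite subspaces have isomorphic residues.

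\textbf{Step 2: transitivity for both kinds of roots.} A root of $\Gamma$ is determined by a path of three collinear points $n,d,q$ together with the opposite-side vertex $u$. In a quadrangle, a root of the first kind has interior the two lines $dn\cup dq$. A root of the second kind has interior the line $dq$ together with the lines through $d$ and $q$. Transitivity on the apartments containing the root then means the following.
\begin{itemize}
\item For the first kind: for every $u'$ collinear to $n$ and $q$ and opposite $d$, some elation maps $u$ to $u'$. Lemma~\ref{GQSecond} provides exactly this.
\item For the second kind: for all $u'\in uq\setminus\{q\}$ and $n'\in nd\setminus\{d\}$ with $u'\perp n'$, some elation sends $nu$ to $n'u'$. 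Lemma~\ref{GQFirst} provides this. Note that $n'$ is determined by $u'$ as the projection of $u'$ onto $nd$, so hitting $u'$ suffices.
\end{itemize}
The maps constructed in those lemmas are collineations of $\Gamma$, being compositions of projections between residues. They fix the interiors as required, so they are root elations of $\Gamma$.

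\textbf{Main obstacle.} I expect the delicate point to be Step~1. One must check carefully that the point residue identification is compatible: the projection maps in Lemmas~\ref{GQFirst} and~\ref{GQSecond} are defined within $\Delta'$, and $\Gamma$ must coincide with the quadrangle used there, including its lines. Lines of $\Gamma$ are the lines of $\Delta$ contained in $U^\perp\cap V^\perp$, and these are the same as the lines of $\Delta'$ contained in $p^\perp\cap b^\perp$. Once this identification is in place, the Moufang property of $\Gamma$ follows immediately from the two lemmas.
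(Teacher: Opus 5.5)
Your route is the paper's. At the start of Section~\ref{GQ} the paper simply asserts that $\Gamma$ sits in a rank~$3$ polar subspace, where it equals $p^\perp\cap b^\perp$ for opposite points $p,b$. It then invokes Lemmas~\ref{GQFirst} and~\ref{GQSecond}, exactly as in your Step~2.

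However, your explicit construction in Step~1 fails for $n\ge 4$. By definition $V_0=\proj_V(U_0)$ consists of points collinear to \emph{all} of $U_0$. So $U_0$ and $V_0$ are never opposite, and $U_0^\perp\cap V_0^\perp=\langle U_0,V_0\rangle^\perp$ is degenerate. Your dimension count is also off: since $\proj_{U_0}(V)=\emptyset$, the formula gives $\dim V-\dim V_0=\dim U_0+1=n-3$. Hence $V_0$ is a single point, which is a hyperplane of $V$ only when $n=4$. In fact your $V_0$ is literally your $b=V\cap U_0^\perp$. Consequently $U\cap V_0^\perp$ is a hyperplane of $U$ containing $U_0$, i.e.\ equal to $U_0$. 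So your $p$ is not a point outside $U_0$, and the configuration collapses.

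The repair is to choose in the other order. Fix a point $p\in U\setminus U_0$ and put $V_0:=V\cap p^\perp$ and $b:=V\cap U_0^\perp$.
\begin{itemize}
\item $V_0$ is a hyperplane of $V$, since $p\notin V^\perp$ by opposition of $U$ and $V$.
\item Since $\proj_{V_0}(U)\subseteq\proj_V(U)=\emptyset$, the dimension formula gives $\dim\proj_U(V_0)=0$. Thus $\proj_U(V_0)=\{p\}$, and hence $\proj_{U_0}(V_0)=\emptyset$. Together with $\dim U_0=\dim V_0=n-4$, this makes $U_0$ and $V_0$ opposite.
\item $b$ is a point with $b\not\perp p$; otherwise $b\in U^\perp\cap V$, contradicting opposition. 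Therefore $b\notin V_0$ and $V=\langle V_0,b\rangle$.
\end{itemize}
With these definitions your remaining bullets go through verbatim.

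Your alternative induction has the same kind of slip. You need a point $y\in V$ opposite the chosen $x\in U$, not the projection of $x$ onto $V$, which is a hyperplane of $V$.
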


\begin{proof}
The claim follows from Lemmas \ref{GQFirst} and \ref{GQSecond}.
\end{proof}


\section{Constructions of elations for polar spaces}
 
In the following, we show that the elations we found for generalized quadrangles in Section \ref{GQ} extend to elations of the ambient polar space. We will only show this in detail for the case that the latter has rank $3$. However, it can be done for arbitrary rank $n \geq 3$, and we will comment on that at the end of this section.

\subsection{Second kind of root}\label{LineElationsExtRank3}

For the rest of the section, let $\Delta$ be a polar space of rank $3$. We want to show that, for every line $dq$ in $\Delta$ through two arbitrary collinear points $d$ and $q$, we can construct an automorphism $\phi \colon \Delta \rightarrow \Delta$ that fixes all planes through $dq$ pointwise, all lines through either $d$ or $q$ linewise, and acts transitively on the points of the lines through either $d$ or $q$ that are not contained in a plane through $dq$.

\begin{lem}\label{lemma1}
Let $d, q$ be two collinear points in the polar space $\Delta$. Let $m, m'$ be two collinear points of $\Delta$ not collinear to $q$ but such that $d \in mm'$. Then there exists a unique permutation $\eta$ of the set of points $d^\perp \cup q^\perp$ with the following properties.
\begin{compactenum}[$(i)$]
\item $\eta(m)=m'$,
\item all points of $d^\perp \cap q^\perp$ are fixed,
\item the collineation induced by $\eta$ in each plane $\pi$ containing $d$ or $q$, but not both, is a translation of $\pi$ with axis $\proj_{\pi}(dq)$ and center $d$ or $q$, respectively,
\item $\eta$ preserves collinearity.
\end{compactenum}
\end{lem}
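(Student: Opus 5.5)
Since $\Delta$ has rank $3$, every point of $d^\perp\cup q^\perp$ lies in a plane through $d$ or through $q$. Conditions (ii) and (iii) therefore pin down $\eta$ plane by plane, and the plan is to define $\eta$ by these local translations, check that it is well defined, and only then deal with collinearity. Points of $d^\perp\cap q^\perp$ are fixed, by (ii). Now take a point $x\in d^\perp\setminus q^\perp$. The plane $\pi=\<d,x,\proj_{dq}(x)\>$ is then the unique plane containing $d$ and $x$ that meets $dq$ in a line through $d$; any other plane through $dx$ misses $q^\perp$ outside $d$. On such a plane (iii) asks for a translation with axis $\proj_\pi(dq)$, which is a line through $d$, and center $d$. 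A translation of a projective plane with given axis is determined by the image of a single point off the axis, so the task is to choose, coherently for every plane through $d$ not containing $q$, the image of one point.

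The coherent choice comes from working in $\Res(d)$, which is a generalized quadrangle because $\Delta$ has rank $3$. Here $dq$ is a point of $\Res(d)$, $dm=dm'$ is a point opposite it, and planes through $d$ are lines of $\Res(d)$. In $\Res(d)$ the translations we want should fix every line through $dq$, which are the planes through $dq$, and should fix every point collinear to $dq$. Hence the planes through $d$ are all stabilized. For a plane $\pi\ni d$ with $q\notin\pi$, I would determine the translation on $\pi$ by where it sends the points of one line through $d$ not on the axis. To do this, transport the prescribed action on $dm$ (determined by $m\mapsto m'$, a translation of the affine line $dm\setminus\{d\}$) along paths in $\Delta$, using projections between opposite lines. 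Concretely, for a line $M\ni d$ in $\pi$, $M\neq\proj_\pi(dq)$, I would use planes containing $M$ together with planes through $q$ to connect $M$ to $dm$. The images are then forced by the requirement that collinear points go to collinear points. Uniqueness comes out of this: (iii) and (iv) force the image of every point once $m\mapsto m'$ is given, and this forcing can be traced along a chain $dm \to$ (via $q$-planes) $\to$ any line through $d$ or $q$. The points of $q^\perp\setminus d^\perp$ are handled symmetrically, with the center moved to $q$ through planes meeting both $d^\perp$ and $q^\perp$.

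The main obstacle is existence, that is, showing that the forced images are independent of the chain used, so that $\eta$ is well defined and (iv) holds for every pair of collinear points. Rather than verifying this combinatorially, I would take the self-projectivity $\theta=p\per\ell\per j\per b\per p$ from Lemma~\ref{GQFirst}, suitably positioned with $p:=d$ or relative to $dq$, and use it as the source of existence. After choosing points with $q\in$ an apartment of $\Res(d)$, this $\theta$ realizes the root elation of $\Res(d)$ fixing the neighbours of $dq$, and Observation~\ref{observation2} gives the analogous map fixing planes through $dq$. Each projection step maps planes to planes and preserves incidence, so it induces a genuine map on the points of the relevant planes, and collinearity is preserved automatically. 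Uniqueness then shows that the maps built at $d$ and at $q$ agree on the overlap $d^\perp\cap q^\perp$, where both are the identity, and also on the lines meeting both $d^\perp$ and $q^\perp$. Gluing them gives $\eta$. The remaining routine work is to check, using Proposition~\ref{proposition2.10}, that the action on each plane is a translation with the stated axis, and that the plane $\<d,m,\proj_{dq}(m)\>$ carries $m$ to $m'$.
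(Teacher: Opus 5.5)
Your uniqueness sketch, where images are forced by copying along opposite lines, is in line with the paper. The existence part, however, has a genuine gap, and your set-up understates it. For $x\in d^\perp\setminus q^\perp$ one has $\proj_{dq}(x)=d$, so $\langle d,x,\proj_{dq}(x)\rangle$ is only the line $dx$. Moreover, for \emph{every} plane $\pi\supseteq dx$ the set $q^\perp\cap\pi$ is a line through $d$ (Proposition~\ref{proposition2.10}). So $x$ lies in at least three planes through $d$ not containing $q$, and (iii) imposes a translation on each of them. Coherence is therefore already an issue at a single point, not only along chains.

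The proposed source of existence does not resolve this.
\begin{itemize}
\item With $p:=d$, the map $\theta$ of Lemma~\ref{GQFirst} is a collineation of $\Res(d)$. But the $\eta$ you want stabilizes every line through $d$, so it induces the identity on $\Res(d)$; a nontrivial root elation of $\Res(d)$ is the wrong object.
\item With $p,b$ opposite and collinear to $d$ and $q$, the induced map $x\mapsto\theta(px)\cap b^\perp$ is defined only on $p^\perp\cap b^\perp$, which meets each plane through $d$ in at most a line. Projections act on residues, not on points of planes, so nothing yields a translation of a whole plane ``automatically''.
\end{itemize}
Covering $d^\perp\cup q^\perp$ would require all such pairs $(p,b)$ plus a proof that the partial maps agree, which is the original problem. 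Invoking uniqueness there is circular: uniqueness says each value is forced along a chain, not that different chains force the same value. In the paper, the comparison of $\theta$ with $\eta$ (Lemma~\ref{lemma2}, Corollary~\ref{cor1}, Lemma~\ref{lemma3}) is deduced \emph{from} this lemma, not the other way round.

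The missing idea is the paper's direct coherence argument:
\begin{itemize}
\item Start from one translation of $\beta=\langle m,\proj_\pi(m)\rangle$ with axis $L=\proj_\pi(m)$ and center $d$. It exists because planes of a polar space are Moufang.
\item Copy it to a line $J\ni q$ not collinear with $d$, and show the copy does not depend on the line $K\ni d$ of $\beta$ used. This works because $z=\proj_L(x)$ is the same point for all $x\in J\setminus\{q\}$, is collinear with all of $J$, and lies on the axis. So the translation acts on the pencil of lines of $\beta$ through $z$, which corresponds to the points of $J$.
\item Extend to the planes through $J$ (claim (**)), interchange $d$ and $q$, and propagate using connectedness of the geometry opposite $dq$ in $\Res(d)$.
\end{itemize}
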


\begin{proof}
Let $\pi$ be a given plane containing $dq$. 
Let $\beta$ be the plane spanned by $m$ and $L := \proj_\pi(m)$. Then $\beta$ contains $d$ and $m'$ (cf. Fig.\ \ref{lemma1setup}). 
Since projective planes inside polar spaces are Moufang, there exists an elation $\eta$ of $\beta$ with axis $L$ and center $d$ such that $\eta(m) = m'$.
Now let $J$ be a line through $q$ not collinear to $d$. 
Then $J$ is opposite every line of $\beta$ containing $d$, except for $L$. 
For every line $K \subseteq \beta$, with $d \in K \neq L$, we define an action 
$$\eta_K \colon J \to J, \quad x \mapsto \proj_J^K(\eta(\proj_K^J(x))).$$

\begin{center}
\begin{tabular}{>{\centering\arraybackslash} m{4.1cm}  >{\centering\arraybackslash} m{6.5cm}}
\resizebox{3.6cm}{!}{
\begin{tikzpicture}[scale=1.1]
\begin{scope}[xscale=1.35, yscale=1.3]
\coordinate (o) at (0, 1, 0);  
\coordinate (n) at (-1, 0, 0); 
\coordinate (q) at (1, 0, 0);  
\coordinate (u) at (0, 0, 0.8);  
\coordinate (d) at (0, 0, -0.8); 
\coordinate (m') at ($ (u) + 0.5*(q) - 0.5*(u) $); 
\coordinate (d') at ($ (u) + 0.5*(n) - 0.5*(u) $); 

\draw[] (q) -- (o);
\draw[] (q) -- (u); 
\draw[thick] (o) -- (u);
\draw[] (o) -- (n);

\coordinate (beta) at ($ (m') + 0.3*(o) - 0.3*(m') $); 
\coordinate (pi) at ($ (d') + 0.3*(o) - 0.3*(d') $); 
\coordinate (L) at ($ (u) + 0.6*(o) - 0.6*(u) $); 
\end{scope}

\fill[lightgray, opacity=0.3] (o) -- (u) -- (n) -- cycle; 
\draw[] (u) -- (n); 
\fill[lightgray, opacity=0.1] (o) -- (u) -- (q) -- cycle; 

\node at (o) [circle, fill, inner sep=1pt, label={[label distance=-1mm]above:{\small \(  \)}}] {};
\node at (q) [circle, fill, inner sep=1pt, label={[label distance=-1mm]below:{\small \( m \)}}] {};
\node at (u) [circle, fill, inner sep=1pt, label={[label distance=-1mm]below:{\small \( d \)}}] {};
\node at (m') [circle, fill, inner sep=1pt, label={[label distance=-1mm]below:{\small \( m' \)}}] {};
\node at (n) [circle, fill, inner sep=1pt, label={[label distance=-1mm]below:{\small \( q \)}}] {};

\node at (beta) [label={[label distance=-1mm, color=gray]center:{\small \( \boldsymbol{\beta} \)}}] {};
\node at (pi) [label={[label distance=-1mm, color=gray]center:{\small \( \boldsymbol{\pi} \)}}] {};
\node at (L) [label={[label distance=-1mm]right:{\small \( L \)}}] {};
\end{tikzpicture}}
\captionof{figure}{The basic set up for the proof of Lemma \ref{lemma1}.}\label{lemma1setup}

&

\resizebox{5.8cm}{!}{
\begin{tikzpicture}[scale=1.1]
\begin{scope}[xscale=1.35, yscale=1.3]
\coordinate (o) at (0, 1, 0);  
\coordinate (n) at (-1, 0, 0); 
\coordinate (q) at (1, 0, 0);  
\coordinate (u) at (0, 0, 0.8);  
\coordinate (d) at (0, 0, -0.8); 
\coordinate (m') at ($ (u) + 0.5*(q) - 0.5*(u) $); 
\coordinate (d') at ($ (u) + 0.5*(n) - 0.5*(u) $); 
\coordinate (m'') at ($ (u) + 0.9*(q) - 0.9*(u) $); 
\coordinate (K) at ($ (o) + 0.7*(q) - 0.7*(o) $); 

\draw[] (q) -- (o);
\draw[] (q) -- (u); 
\draw[] (o) -- (u);
\draw[] (o) -- (n);
\draw[] (d) -- (n);
\draw[opacity=0.2] (d) -- (o);
\draw[opacity=0.2] (d) -- (m'');
\draw[opacity=0.2] (o) -- (m'');

\coordinate (beta) at ($ (m') + 0.3*(o) - 0.3*(m') $); 
\coordinate (pi) at ($ (d') + 0.3*(o) - 0.3*(d') $); 
\coordinate (L) at ($ (u) + 0.6*(o) - 0.6*(u) $); 
\coordinate (x') at ($ (d) + 0.6*(n) - 0.6*(d) $); 
\end{scope}

\fill[lightgray, opacity=0.3] (o) -- (u) -- (n) -- cycle; 
\draw[] (u) -- (n); 
\fill[lightgray, opacity=0.1] (o) -- (u) -- (q) -- cycle; 

\path[name path=lineA] (o) -- (m'');
\path[name path=lineB] (u) -- (K);
\path[name intersections={of=lineA and lineB, by=I}];

\draw[] (d) -- (I);
\coordinate (K') at ($ (u) + 0.5*(I) - 0.5*(u) $); 
\draw[thick] (u) -- (K);

\coordinate (KL) at ($ (u) + 0.8*(I) - 0.8*(u) $); 

\draw[] (x') -- (K'); 

\node at (o) [circle, fill, inner sep=1pt, label={[label distance=-1mm]above:{\small \(  \)}}] {};
\node at (q) [circle, fill, inner sep=1pt, label={[label distance=-3mm, xshift=1.5mm]below right:{\small \( m \)}}] {};
\node at (u) [circle, fill, inner sep=1pt, label={[label distance=-1mm]below:{\small \( d \)}}] {};
\node at (n) [circle, fill, inner sep=1pt, label={[label distance=-1mm]below:{\small \( q \)}}] {};
\node at (d) [circle, fill, inner sep=1pt, label={[label distance=-2mm]above right:{\small \( x \)}}] {};
\node at (KL) [label={[label distance=-3mm]below right:{\small \( K \)}}] {};
\node at (I) [circle, fill, inner sep=1pt, label={[label distance=-0.5mm]above right:{\small \( \proj_K^J(x) \)}}] {};
\node at (K') [circle, fill, inner sep=1pt, label={[label distance=-1mm, yshift=-1.5mm]below right:{\small \( \eta(\proj_K^J(x)) \)}}] {};
\node at (x') [circle, fill, inner sep=1pt, label={[label distance=0mm, xshift=4.5mm]above left:{\small \( \proj_J^K (\eta(\proj_K^J(x))) \)}}] {};

\node at (beta) [label={[label distance=-1mm, color=gray]center:{\small \( \)}}] {};
\node at (pi) [label={[label distance=-1mm, color=gray]center:{\small \(  \)}}] {};
\node at (L) [label={[label distance=-1mm]right:{\small \(  \)}}] {};
\end{tikzpicture}}
\captionof{figure}{Visualization for the beginning of the proof of Lemma \ref{lemma1}}\label{lemma1defnetaK}
\end{tabular}
\end{center}

We claim
\begin{itemize}
\item[(*)] The map $\eta_K$ is independent of $K$.
\end{itemize}
Indeed, let $K_1$ and $K_2$ be two lines in $\beta$ through $d$, distinct from $L$. 
Let $y_1 \in K_1 \setminus \{d\}$ be arbitrary. 
Set $x := \proj_J(y_1)$. If $z = \proj_L(x)$, then $y_1z = \proj_\beta(x)$. 
It follows that $\proj_{K_2}(x) = y_1z \cap K_2 =: y_2$, and hence $z \in \eta(y_1)\eta(y_2)$, as $\eta(z) = z$ (cf. Fig.\ \ref{lemma1claimstar}). 

\begin{center}\resizebox{4cm}{!}{
\begin{tikzpicture}[scale=1.4]
\begin{scope}[xscale=1.35, yscale=1.3]
\coordinate (o) at (0, 1, 0);  
\coordinate (n) at (-1, 0, 0); 
\coordinate (q) at (1, 0, 0);  
\coordinate (u) at (0, 0, 0.8);  
\coordinate (d) at (0, 0, -0.8); 
\coordinate (m') at ($ (u) + 0.5*(q) - 0.5*(u) $); 
\coordinate (d') at ($ (u) + 0.5*(n) - 0.5*(u) $); 
\coordinate (m'') at ($ (u) + 0.9*(q) - 0.9*(u) $); 
\coordinate (K) at ($ (o) + 0.7*(q) - 0.7*(o) $); 
\coordinate (K2) at ($ (o) + 0.35*(q) - 0.35*(o) $); 

\draw[] (q) -- (o);
\draw[] (q) -- (u); 
\draw[] (o) -- (u);
\draw[] (o) -- (n);
\draw[] (d) -- (n);
\draw[opacity=0.2] (d) -- (o);
\draw[opacity=0.2] (d) -- (q);

\coordinate (beta) at ($ (m') + 0.3*(o) - 0.3*(m') $); 
\coordinate (pi) at ($ (d') + 0.3*(o) - 0.3*(d') $); 
\coordinate (L) at ($ (u) + 0.6*(o) - 0.6*(u) $); 
\coordinate (x') at ($ (d) + 0.6*(n) - 0.6*(d) $); 
\end{scope}

\fill[lightgray, opacity=0.3] (o) -- (u) -- (n) -- cycle; 
\draw[] (u) -- (n); 
\fill[lightgray, opacity=0.1] (o) -- (u) -- (q) -- cycle; 

\path[name path=lineA] (o) -- (m'');
\path[name path=lineB] (u) -- (K);
\path[name intersections={of=lineA and lineB, by=I}];

\draw[] (d) -- (K);
\coordinate (K') at ($ (u) + 0.5*(I) - 0.5*(u) $); 
\draw[thick] (u) -- (K);
\coordinate (KL) at ($ (u) + 0.8*(I) - 0.8*(u) $); 

\draw[] (d) -- (K2);
\draw[thick] (u) -- (K2);
\coordinate (K2L) at ($ (u) + 0.8*(K2) - 0.8*(u) $); 

\coordinate (K'') at ($ (o) + 1.5*(K') - 1.5*(o) $); 
\path[name path=lineA] (o) -- (K'');
\path[name path=lineB] (u) -- (q);
\path[name intersections={of=lineA and lineB, by=I2}];

\draw[] (o) -- (I2);

\path[name path=lineA] (o) -- (K'');
\path[name path=lineB] (u) -- (K2);
\path[name intersections={of=lineA and lineB, by=I3}];

\node at (o) [circle, fill, inner sep=1pt, label={[label distance=-1mm]above:{\small \( z \)}}] {};
\node at (q) [circle, fill, inner sep=1pt, label={[label distance=-2mm]below right:{\small \(  \)}}] {};
\node at (u) [circle, fill, inner sep=1pt, label={[label distance=-1mm]below:{\small \( d \)}}] {};
\node at (n) [circle, fill, inner sep=1pt, label={[label distance=-1mm]below:{\small \( q \)}}] {};
\node at (d) [circle, fill, inner sep=1pt, label={[label distance=-2mm, xshift=1mm]above right:{\small \( x \)}}] {};
\node at (KL) [label={[label distance=-3mm]below right:{\small \( K_1 \)}}] {};
\node at (K) [circle, fill, inner sep=1pt, label={[label distance=-0.5mm]above right:{\small \( y_1 \)}}] {};
\node at (K') [circle, fill, inner sep=1pt, label={[label distance=-1mm, yshift=-1.5mm]below right:{\small \( \eta(y_1) \)}}] {};
\node at (K2) [circle, fill, inner sep=1pt, label={[label distance=-0.5mm]above right:{\small \( y_2 \)}}] {};
\node at (K2L) [label={[label distance=-3.5mm, yshift=3mm, xshift=-0.7mm]above:{\small \( K_2 \)}}] {};
\node at (I3) [circle, fill, inner sep=1pt, label={[label distance=-1mm, yshift=1mm]left:{\small \( \eta(y_2) \)}}] {};

\coordinate (M) at ($ (d) + 0.5*(n) - 0.5*(d) $); 
\node at (M) [label={[label distance=-2mm, yshift=-1.5mm]above left:{\small \( J \)}}] {};

\node at (beta) [label={[label distance=-1mm, color=gray]center:{\small \( \)}}] {};
\node at (pi) [label={[label distance=-1mm, color=gray]center:{\small \(  \)}}] {};
\node at (L) [label={[label distance=-1mm]right:{\small \(  \)}}] {};
\end{tikzpicture}}
\captionof{figure}{Visualization for the proof of claim (*) in the proof of Lemma \ref{lemma1}}\label{lemma1claimstar}
\end{center}

Since $z = \proj_L(x)$, $z$ is collinear to both $x$ and $q$, and therefore $z$ is collinear to all points of $J$. 
Thus, we have $z = L \cap \proj_\pi(J)$, and as such, $z$ is also equal to $\proj_L(\eta_{K_1}(x)) = \proj_L(\eta_{K_2}(x))$. 

It follows that $\proj_\beta(\eta_{K_1}(x)) = \eta(y_1)z$, which, by the above, coincides with $\eta(y_2)z$. 
Hence, $\eta_{K_1}(x)$ is collinear to $\eta(y_2)$, and with that, we conclude that $\eta_{K_2}(x) = \eta_{K_1}(x)$. 
The claim is proved.

From now on, we will abbreviate the definition of $\eta_K$ above by the sentence “we copy the action of $\eta$ on $K$ to $J$.”

Thanks to (*), we can denote $\eta_K$ simply by $\eta$. 
Now let $\alpha$ be any plane through $J$. We claim that
\begin{itemize}
\item[(**)] There exists a unique elation in $\alpha$ with axis $A := \proj_\alpha(dq)$ and center $q$ extending the action of $\eta$.
\end{itemize}
This follows in the same way as (*) by letting $mm'$ play the role of $J$ and $\alpha$ the role of $\beta$ in the proof of (*).

Let $\beta'$ be any plane intersecting $\beta$ in a line through $d$. 
Then (**) with the roles of $d$ and $q$ interchanged implies that there is a unique elation in $\beta'$ with axis $\proj_{\beta'}(dq)$ agreeing with $\eta$ on the line $\beta \cap \beta'$. 
By (*), the action of that elation on any line of $\beta'$ through $d$ (not collinear to $q$), copied to $J$, agrees with $\eta$. 
If we keep doing this procedure, then—since in $\Res(d)$ the geometry opposite $dq$ is connected (see \cite{MonographPolarSpaces})—the action of $\eta$ on $J$, copied on every line through $d$ opposite $J$, with, additionally, the identity on $d^\perp \cap q^\perp$, defines a permutation $\eta$ preserving collinearity and inducing a nontrivial elation in every plane through $d$ not containing $q$.

Note that $\eta$, acting on $d^\perp$ as defined in the previous paragraph, is independent of the line $J$ through $q$ in $\alpha$, as follows from (*) with $d$ and $q$ interchanged. 
Likewise, we can now extend $\eta$ from $\alpha$ to $q^\perp$, preserving collinearity, and the action on every line through $q$ not collinear to $d$ is copied from the action of $\eta$ on any line through $d$ not collinear to $q$.

Since “copying the action” just means that collinear pairs of points from $d^\perp \times q^\perp$ are mapped to collinear pairs of points, the lemma is proved.
\end{proof}

We now make the connection between $\eta$ as defined in Lemma \ref{lemma1} and the axial elation defined in Lemma \ref{GQFirst}. 

\begin{lem}\label{lemma2}
Let $d,q$ be two collinear points in the polar space $\Delta$. Let $m,m'$ be two collinear points of $\Delta$ not collinear to $q$, but such that $d\in mm'$. Let $\eta$ be the unique permutation of $d^\perp\cup q^\perp$ with the properties mentioned in \emph{Lemma \ref{lemma1}}. Let $p$ and $b$ be two opposite points both collinear to $d,q$, and $m$. Let $\theta$ be the root elation of $p^\perp\cap b^\perp$ with root $(q,qd,d)$ mapping $m$ to $m'$. Then the action of $\theta$ on every line through $q$ or $d$ in $p^\perp\cap b^\perp$ coincides with the action of $\eta$ on that line. 
\end{lem}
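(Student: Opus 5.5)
We compare the two maps on lines of $\Gamma:=p^\perp\cap b^\perp$ through $d$ or $q$, using the uniqueness part of Lemma~\ref{lemma1}. Since $m,m'\in\Gamma$ (as $p,b\perp d,m$ and $mm'$ lies in $\Gamma$), the root elation $\theta$ of Lemma~\ref{GQFirst} is defined and maps $m$ to $m'$. The plan has three steps.

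\textbf{Step 1: reduce to the line $dm$.} In $\Gamma$, the line $dm$ is opposite every line through $q$ other than $qd$. So $\theta$ acting on $dm$ determines its action on every line $J\ni q$, $J\neq qd$, by projection: $\theta(x)=\proj_J(\theta(\proj_{dm}(x)))$. This holds because $\theta$ preserves collinearity, fixes $qd$ pointwise, and stabilizes lines through $d$ and $q$. Likewise, $\theta$ on a line $J$ through $q$ determines it on every line through $d$ opposite $J$ in $\Gamma$. Connectivity in the quadrangle is immediate, since every line through $d$ other than $dq$ is opposite every line through $q$ other than $dq$. By Lemma~\ref{lemma1}$(iv)$, $\eta$ also preserves collinearity and fixes $dq$, and in the same way it is determined on lines of $\Gamma$ by copying its action from one line to another (the ``copy the action'' of claim~(*)). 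So it suffices to show $\theta$ and $\eta$ agree on $dm$.

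\textbf{Step 2: agreement on $dm$.} Both maps fix $d$ and send $m\mapsto m'$. We must show they agree on all of $dm$. For $\eta$, the action on $dm$ is the restriction of an elation of a plane $\beta\ni dm$ with center $d$. Take $\beta=\langle d,m,z\rangle$, where $z=\proj_{\pi}(m)$ for a plane $\pi\supseteq dq$. Choose $\pi$ to contain $p$ and $\beta$ to contain $p$; this is possible since $p\perp d,q,m$ and $p\in\proj(\ldots)$, so $L=pd$. Then $\eta$ on $dm$ is the restriction of the elation of $\beta=\langle p,d,m\rangle$ with axis $pd$ and center $d$ mapping $m\mapsto m'$. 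Such an elation is unique, so its restriction to $dm$ is determined by $m\mapsto m'$.

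For $\theta$, the construction in Lemma~\ref{GQFirst} (with $n$ there playing the role of $m$) is a composition of projections between residues. Its action on the plane $\langle p,d,m\rangle$, viewed through $\Res(p)$, is a collineation of the line $pd$'s star. Concretely, $\theta$ restricted to $dm$ equals the map $x\mapsto \theta(px)\cap b^\perp$, and each projection step is a perspectivity. I will show that the induced map on the lines through $p$ in $\langle p,d,m\rangle$ is a central collineation of that plane's line pencil. Equivalently, in the quotient, $dm$ is acted on by a perspectivity composite fixing $d$ and the point $\proj_{dm}(\cdot)$ at infinity.

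\textbf{Step 3: the main obstacle.} A projectivity of a line fixing one point and mapping $m\mapsto m'$ is not unique in general; one also needs that it acts as a translation. So the key is to show $\theta|_{dm}$ is induced by a planar elation with center $d$. The approach is to use Step 1 in reverse. The map $\theta$ on $dm$ is the copy of $\theta$ on a line $J\ni q$, and then again the copy back. I will exhibit a plane $\alpha\supseteq dm$ and a point $y$ such that $\theta$ on $dm$ is $\proj^{y}$-conjugate to $\theta$ on $J$. Alternatively, extend $\theta$ via the projections to the whole plane $\langle p,d,m\rangle$. Each $\proj$ step maps planes through a line to planes, and the fixed lines $pd$ (stabilized planes through $pd$) together with pointwise fixed $dq$ show that the extension fixes all points of $pd$ and all lines through $d$. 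Hence it is an elation with axis $pd$ and center $d$, i.e.\ exactly the elation defining $\eta$ on $\beta$. This extension argument is where I expect most of the work to lie.
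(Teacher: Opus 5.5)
Your reduction in Step~1 is correct, and it is exactly how the paper finishes: once $\theta$ and $\eta$ agree on $mm'=dm$, both are determined by copying, first on the lines of $p^\perp\cap b^\perp$ through $q$ and then on those through $d$. Your reading of $\eta|_{dm}$ from Lemma~\ref{lemma1}$(iii)$ is also fine.

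The gap is the step you flag yourself. You never prove that $\theta|_{dm}$ is the restriction of a planar elation with center $d$, and the mechanism you sketch cannot deliver it.
\begin{itemize}
\item The map $\theta$ permutes $\Res(p)$. On $\langle p,d,m\rangle$ it only permutes the pencil of lines through $p$, so it is just $\theta|_{dm}$ again.
\item It gives no action on the points of $pd$, nor on the lines of $\langle p,d,m\rangle$ through $d$ other than $dm$. So there is no ``extension'' whose fixed elements you could read off, and the claim that it fixes $pd$ pointwise and all lines through $d$ is unsupported.
\item Conjugating $\theta|_{dm}$ to $\theta|_J$ by projections does not help either, since it cannot detect whether the map is a translation.
\item $\langle p,d,m\rangle$ is the wrong plane to look in. With $j\in pq$ and $\ell\in bd$ one has $j^\perp\cap\langle p,d,m\rangle=pd$ and $\ell^\perp\cap\langle p,d,m\rangle=b^\perp\cap\langle p,d,m\rangle=dm$, so none of the four projections is visible there.
\end{itemize}

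The missing idea is an explicit computation in $\beta:=\langle b,d,m\rangle$, where all four projections take place.
\begin{itemize}
\item Let $N:=\proj_\beta(j)$. This is a line of $\beta$ through $d$, distinct from $bd$ and $dm$. Note also that $p^\perp\cap\beta=dm$.
\item Since $\ell,b\in\beta$ are collinear with every point of $\beta$, the projectivity $p\per\ell\per j\per b\per p$ acts on $x\in dm$ by $px\mapsto \ell x\mapsto jx'\mapsto bx'\mapsto p(bx'\cap dm)$, where $x':=\ell x\cap N$.
\item Thus $\theta|_{dm}$ is the composite of the perspectivities $dm\to N\to dm$ with centers $\ell$ and $b$. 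Both centers lie on the line $bd$ through $d=dm\cap N$. Choosing $\ell:=bd\cap m(bm'\cap N)$ makes the composite send $m$ to $m'$.
\item Take $bd$ as the line at infinity. The two perspectivities are then parallel projections between parallel lines. So their composite is the restriction of a translation, i.e.\ of the elation of $\beta$ with axis $bd$ and center $d$ mapping $m$ to $m'$.
\item Since $\proj_\beta(dq)=bd$, Lemma~\ref{lemma1}$(iii)$ says that $\eta$ induces on $\beta$ an elation with the same axis and center mapping $m$ to $m'$. Uniqueness of such an elation gives $\theta=\eta$ on $dm$, and your Step~1 then finishes the proof.
\end{itemize}

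This computation is the whole content of the paper's proof. Without it, your proposal amounts only to the easy reduction.
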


\begin{proof}
Let $j$ be an arbitrary point on $pq\setminus\{p,q\}$, and let $\beta$ be the plane spanned by $b,m$, and $d$. 
Define $\ell$ as the intersection of the line $bd$ with the line joining $m$ with $bm'\cap \proj_\beta(j)$ (cf. Fig.\ \ref{forSecondLemmaRank3FirstKind}). 

\begin{center}
\begin{tikzpicture}
\begin{scope}[xscale=1.35, yscale=1.3]
\coordinate (q) at (1, 0, 0);  
\coordinate (b) at (0, 1, 0);  
\coordinate (n) at (-1, 0, 0); 
\coordinate (p) at (0, -1, 0); 
\coordinate (u) at (0, 0, 0.8);  
\coordinate (d) at (0, 0, -0.8); 

\coordinate (beta) at ($ (b) + 0.5*(n) - 0.5*(b) $); 
\fill[lightgray, opacity=0.3] (b) -- (n) -- (d) -- cycle; 

\draw[] (q) -- (b);
\draw[] (b) -- (n);
\draw[] (n) -- (p);
\draw[] (p) -- (q);
\draw[] (q) -- (u);
\draw[] (b) -- (u);
\draw[] (n) -- (u);
\draw[] (p) -- (u);
\draw[opacity=0.6] (q) -- (d);
\draw[opacity=0.6] (b) -- (d);
\draw[opacity=0.6] (n) -- (d);
\draw[opacity=0.6] (p) -- (d);

\coordinate (j) at ($ (p) + 0.3*(q) - 0.3*(p) $); 
\coordinate (l) at ($ (d) + 0.5*(b) - 0.5*(d) $); 
\coordinate (n') at ($ (n) + 0.5*(d) - 0.5*(n) $); 
\coordinate (u') at ($ (u) + 0.5*(q) - 0.5*(u) $); 

\draw[opacity=0.3] (n') -- (u');

\draw[name path=path1, opacity=0.3] (j) -- (u);
\draw[name path=path2, opacity=0.3] (p) -- (u');    
\path [name intersections={of=path1 and path2, by=i}];    
\coordinate (i) at (i);
\draw[opacity=0.3] (l) -- (i);

\draw[opacity=0.3] (j) -- (d) -- (beta) -- (j);
\draw[opacity=0.3] (b) -- (n');

\draw[name path=path1, opacity=0.3] (d) -- (beta);
\draw[name path=path2, opacity=0.3] (b) -- (n');    
\path [name intersections={of=path1 and path2, by=i2}];    
\coordinate (i2) at (i2);
\draw[opacity=0.3] (l) -- (n);
\end{scope}

\node at (q) [circle, fill, inner sep=1pt, label={[label distance=-1mm]above right:{\small \( q \)}}] {};
\node at (b) [circle, fill, inner sep=1pt, label={[label distance=-1mm]above:{\small \( b \)}}] {};
\node at (n) [circle, fill, inner sep=1pt, label={[label distance=-1mm]below left:{\small \( m \)}}] {};
\node at (p) [circle, fill, inner sep=1pt, label={[label distance=-1mm]below:{\small \( p \)}}] {};
\node at (u) [circle, fill, inner sep=1pt, label={[label distance=-2.5mm]below left:{\small \(  \)}}] {};
\node at (d) [circle, fill, inner sep=1pt, label={[label distance=-2.5mm]above right:{\small \( d \)}}] {};

\node at (j) [circle, fill, inner sep=1pt, label={[label distance=-1mm]below right :{\small \( j \)}}] {};
\node at (l) [circle, fill, inner sep=1pt, label={[label distance=-1mm]above right:{\small \( \ell \)}}] {};
\node at (n') [circle, fill, inner sep=1pt, label={[label distance=-1mm]below left:{\small \( m' \)}}] {};
\node at (u') [circle, fill, inner sep=1pt, label={[label distance=-1mm]below:{\small \( \)}}] {};
\node at (i) [circle, fill, inner sep=1pt, label={[label distance=1.5mm] below:{\small \(  \)}}] {};
\node at (beta) [label={[label distance=-1mm, color=lightgray!20!gray, xshift=-8mm, yshift=-3mm]center:{\small \( \boldsymbol{\beta} \)}}] {};
\node at (i2) [circle, fill, inner sep=1pt, label={[label distance=1.5mm] below:{\small \(  \)}}] {};
\end{tikzpicture}
\captionof{figure}{The points and lines relevant to the proof of Lemma \ref{lemma2}.}
\label{forSecondLemmaRank3FirstKind}
\end{center}

With the proof of Lemma \ref{GQFirst}, we find that $p \per \ell \per j \per b \per p$ moves $pm$ to $pm'$. 
Hence, this defines $\theta$ in $p^\perp\cap b^\perp$. 
For $x$ on $mm'$, we set $x':=\ell x\cap \proj_\beta(j)$.
Then the action of $\theta$ on the line $mm'$ is given by $x\mapsto b x'\cap mm'$. 
It is well known that this is the restriction to $mm'$ of the elation of $\beta$ with axis $bd$ and center $d$, mapping $m$ to $m'$. 
Hence, $\theta(x)=\eta(x)$ for all $x\in mm'$. 
Since the actions on the lines of $p^\perp\cap b^\perp$ through $q$ are obtained by copying, for both $\eta$ and $\theta$, these actions coincide. 
By copying these to the lines through $d$, the lemma follows.   
\end{proof}

The following is an immediate consequence of the foregoing. 
\begin{coro}\label{cor1}
Let $d,q$ be two collinear points in the polar space $\Delta$. Let $m,m'$ be two collinear points of $\Delta$ not collinear to $q$, but such that $d\in mm'$. 
Let $\eta$ be the unique permutation of $d^\perp\cup q^\perp$ with the properties mentioned in \emph{Lemma \ref{lemma1}}. 
Let $p$ and $b$ be two opposite points both collinear to $d,q$, and $m$. 
Let $\theta$ be the root elation of $\Res(p)$ with root $(pq,\<p,q,d\>,pd)$ mapping $pm$ to $pm'$. 
Let $\theta_b$ be the collineation of $\Res(b)$ defined by $L\mapsto \proj_b(\theta(\proj_p(L)))$. 
Then $\theta_b$ is the unique root elation of $\Res(b)$ for which the action on the lines through $b$ collinear to $q$ or $d$ coincides with the action of $\eta$. 
\end{coro}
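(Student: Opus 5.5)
We transport everything to $\Gamma:=p^\perp\cap b^\perp$ and then apply Lemma~\ref{lemma2}.

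\textbf{Step 1: transport to $\Gamma$.} The projections $\proj_b^p\colon\Res(p)\to\Res(b)$ and the maps $\Res(p)\to\Gamma\colon L\mapsto L\cap b^\perp$ and $\Res(b)\to\Gamma\colon L\mapsto L\cap p^\perp$ are isomorphisms of generalized quadrangles. They are compatible: for a line $L$ through $p$ with $x:=L\cap b^\perp$, we have $\proj_b(L)=bx$, because $x$ is the unique point of $L$ collinear to $b$.

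Under these identifications, $\theta$ corresponds to the collineation $\theta_\Gamma$ of $\Gamma$ from Lemma~\ref{GQFirst}. This is the root elation with root $(q,qd,d)$ mapping $m$ to $m'$; note that $m\in\Gamma$, since $m\perp p,b$.

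Likewise, $\theta_b$ corresponds to $x\mapsto \proj_b(\theta(px))\cap p^\perp$. If $\theta(px)=py$ with $y\in\Gamma$, then $\proj_b(py)=by$, and so this map is again $\theta_\Gamma$. Consequently $\theta_b$ is a collineation of $\Res(b)$. It is a root elation with root $(bq,\langle b,q,d\rangle,bd)$, since $\theta_\Gamma$ fixes $q$, $d$ and all lines of $\Gamma$ through $q$ or $d$.

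\textbf{Step 2: agreement with $\eta$.} The lines through $b$ collinear to $q$ or $d$ are exactly the lines $bx$ with $x$ on a line of $\Gamma$ through $q$ or $d$. Indeed, a point $x\in b^\perp\cap q^\perp$ with $x\ne b$ gives the line $bx$, which meets $p^\perp$ in a point of $\Gamma\cap q^\perp$. So we must show that $\theta_\Gamma(x)=\eta(x)$ for such $x$, where $\eta$ is read through the same identification.

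This is precisely Lemma~\ref{lemma2}: on each line of $\Gamma$ through $q$ or $d$, the action of $\theta$ coincides with that of $\eta$. Here we use that $\eta$ preserves collinearity, so that, for $x\in\Gamma\cap(d^\perp\cup q^\perp)$, the point $\eta(x)$ is the point of the image line through $b$ corresponding to $\eta(x)$. This follows from Lemma~\ref{lemma1}(iii) applied to the planes $\langle b,q,x\rangle$ and $\langle b,d,x\rangle$, which contain $q$ (resp. $d$) and $b$. The one point needing care is that $\eta$ is really defined on the lines through $b$, rather than only on their intersections with $\Gamma$; this amounts to checking that the plane translations of Lemma~\ref{lemma1}(iii) fix $b$. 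That holds because $b\in d^\perp\cap q^\perp$, which $\eta$ fixes pointwise by (ii).

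\textbf{Step 3: uniqueness.} Suppose $\theta'$ is another root elation of $\Res(b)$ with the same action on the lines through $b$ collinear to $q$ or $d$. Then $\theta'^{-1}\theta_b$ fixes every point of $\Res(b)$ collinear to $bq$ or to $bd$. That is, it fixes all points on the lines through the two points $bq,bd$ of the quadrangle $\Res(b)$; in particular it fixes an ordinary quadrangle and all lines through two adjacent vertices.

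The standard rigidity fact for generalized quadrangles then forces $\theta'^{-1}\theta_b=1$: a collineation fixing all points collinear with two collinear points $x,y$ is trivial. We would verify this directly as follows. Every point $z$ is either collinear with $x$ or $y$, or lies on a unique line through each fixed point $w\in x^\perp\setminus xy$ that meets $y^\perp$. Two such lines then determine $z$ as their intersection, so $z$ is fixed, and hence all lines are fixed too.

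I expect Step 3, the uniqueness argument, to be the main obstacle, since it is the only step not directly supplied by earlier results.
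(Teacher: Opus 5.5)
Your proposal is correct and follows the route the paper intends. The paper simply declares the corollary an immediate consequence of Lemma~\ref{lemma2}, which is exactly your transport along $\Res(p)\cong p^\perp\cap b^\perp\cong\Res(b)$ followed by reading off the agreement with $\eta$ on the lines through $q$ and $d$; your Step~3 rigidity argument only makes explicit the uniqueness that the paper leaves unstated, and it uses thickness to get two lines through $x$ other than $xy$.
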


The definition of the mapping $L\mapsto \proj_b(\theta(\proj_p(L)))$ will be abbreviated by “$\theta$ is copied from $p$ to $b$.” 

The next lemma allows us to extend $\eta$ to the whole polar space $\Delta$ in an unambiguous way. 

\begin{lem}\label{lemma3}
Let $q$, $d$, and $\eta$ be as before. Let $x$ be a point collinear to a unique point $y$ of $dq$. 
Let $p$ and $b$ be two opposite points collinear to all of $q$, $d$, and $x$. 
Then there exists a unique root elation $\eta_{p,b}$ in $p^\perp \cap b^\perp$ corresponding to the root $(d,dq,q)$ having the same action as $\eta$ on the points collinear to $q$ or $d$. 
Moreover, $\eta_{p,b}(x)$ does not depend on $p$ and $b$. 
\end{lem}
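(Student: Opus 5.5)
Existence and uniqueness of $\eta_{p,b}$ come from what is already proved; the real content is the independence of $\eta_{p,b}(x)$ from $p$ and $b$.

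\emph{Existence and uniqueness.} The point $m := x$ is not collinear to $q$ or $d$ simultaneously. After possibly swapping $d$ and $q$, assume $y\neq q$ or adjust accordingly. Choose $m\in y^\perp\cap d^\perp$ in $p^\perp\cap b^\perp$, for instance a point of $\Gamma:=p^\perp\cap b^\perp$ on the line through $d$ not collinear to $q$, and put $m':=\eta(m)$. Lemma~\ref{GQFirst} gives a root elation $\theta$ of $\Gamma$ with root $(d,dq,q)$ mapping $m$ to $m'$. By Lemma~\ref{lemma2}, $\theta$ agrees with $\eta$ on every line of $\Gamma$ through $d$ or $q$. Set $\eta_{p,b}:=\theta$.

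For uniqueness, a root elation of $\Gamma$ with root $(d,dq,q)$ is determined by the image of one point of the root line, such as $m$. This holds because the elations of that root act sharply transitively on the apartments containing the root, which is the standard rigidity for Moufang quadrangles. Equivalently, two such elations differ by an elation fixing $m$, and that elation fixes an apartment together with the root, hence is trivial.

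\emph{Independence: setup.} Since $x$ is collinear to $y\in dq$, we have $\eta_{p,b}(x)\in \eta(y)\eta_{p,b}(\ldots)$. Concretely, $x$ lies on a line $yx$ of $\Gamma$ through $y$, and $\eta_{p,b}(x)$ is determined via $\Gamma$-geometry as the unique point of $\eta_{p,b}(yx)$ collinear to $\eta(z)$. Here $z$ is the projection of $x$ onto a line through $d$ or $q$ not containing $y$. For example, if $y\neq d$, take a line $N$ of $\Gamma$ through $d$ other than $dq$, and let $z:=\proj_N(x)$; then $\eta_{p,b}(z)=\eta(z)$. Similarly, choose a line $N'$ through $q$, with $z'=\proj_{N'}(x)$. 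Then $\eta_{p,b}(x)=\eta(z)^\perp\cap\eta(z')^\perp\cap(\text{image line through }y)$. Since the relevant points $z,z'$ lie in $x^\perp\cap d^\perp$ and $x^\perp\cap q^\perp$, and these are intrinsic to $\Delta$, this suggests the following. I want to show that $\eta_{p,b}(x)$ is the unique point $x^*$ of $\Delta$ collinear to $y$ with the property that, for every point $z\in x^\perp\cap(d^\perp\cup q^\perp)\setminus dq^\perp$, the point $x^*$ is collinear to $\eta(z)$. That is, $x^*$ is characterized via $\eta$ on $d^\perp\cup q^\perp$ alone.

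\emph{Independence: the two steps.}
\begin{enumerate}
\item $\eta_{p,b}(x)$ satisfies this property for all such $z$, not just those in $\Gamma$.
\item The property determines $x^*$ uniquely.
\end{enumerate}
Uniqueness is easy: two points $z_1\in d^\perp$, $z_2\in q^\perp$ in $\Gamma$ with $z_1\not\perp z_2$ already pin down a point of $\Gamma$. Any other candidate collinear to $y,\eta(z_1),\eta(z_2)$ lies in $\{y,\eta(z_1),\eta(z_2)\}^\perp$, which we would intersect appropriately. Rank~$3$ makes this a line or point, and one more $z$ suffices.

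\emph{Main obstacle.} The main obstacle is step 1 for points $z\notin\Gamma$. To handle it, I would move between pairs $(p,b)$ and $(p',b')$ by changing one point at a time. Both $p$ and $p'$ are opposite $b$, so Corollary~\ref{cor1} gives $\eta_{p,b}$ and $\eta_{p',b}$ as root elations of $\Res(b)$ copied from $\Res(p)$ and $\Res(p')$. Each agrees with $\eta$ on lines through $b$ meeting $q^\perp\cup d^\perp$, so both equal the unique elation in $\Res(b)$ given there. Hence the induced maps on $b^\perp$ coincide, and in particular they agree on $x\in b^\perp$.

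Connectivity of the graph on opposite pairs $(p,b)$ in $\{d,q,x\}^\perp$, with one coordinate changed at a time, then finishes the argument. This is the point I expect to require the most care. I would argue it via the residue of $\langle d,q,x\rangle$-type subspaces being a polar space with connected opposition graph, as used in the proof of Lemma~\ref{lemma1}.
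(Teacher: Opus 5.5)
Your existence and uniqueness part is fine and matches the paper (Lemma~\ref{lemma2} plus rigidity of root elations). The independence argument, however, has a genuine gap at the sentence ``Hence the induced maps on $b^\perp$ coincide.''

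A root elation of $\Res(b)$ acts on the lines through $b$, not on the points of $b^\perp$, and its lifts to point maps of $b^\perp$ are far from unique. The point map of $p^\perp\cap b^\perp$ is recovered only as $x\mapsto\theta_b(bx)\cap p^\perp$, so it depends on $p$. From the correct fact that the copies of $\theta_p$ and $\theta_{p'}$ to $b$ are the same elation $\theta_b$ (Corollary~\ref{cor1}), you only learn that $\eta_{p,b}(x)$ and $\eta_{p',b}(x)$ lie on the common line $\theta_b(bx)$. Whether $\theta_b(bx)\cap p^\perp=\theta_b(bx)\cap {p'}^\perp$ is exactly what the lemma asserts. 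So your Step~1 is never established. Step~2 would not rescue it either: since $y,\eta(z_1),\eta(z_2)\in p^\perp\cap b^\perp$, the set $\{y,\eta(z_1),\eta(z_2)\}^\perp$ contains the noncollinear points $p$ and $b$, so it is never a line or a point.

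The missing idea is to pin down $w:=\eta_{p,b}(x)$ as the intersection of the two \emph{non-coplanar} lines $\theta_p(px)=pw$ and $\theta_b(bx)=bw$; they are non-coplanar because $p\not\perp b$. Then split according to the position of the new point $o\in\{q,d,x\}^\perp$.

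\emph{Case $o$ opposite both $p$ and $b$.} Here $\theta_o$ is both the copy of $\theta_p$ and the copy of $\theta_b$. Hence $\theta_o(ox)$ meets both $pw$ and $bw$. These two lines do not lie in a singular plane, so $\theta_o(ox)$ must pass through $w$, giving $\eta_{p,o}(x)=w=\eta_{o,b}(x)$.

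\emph{Case $o\perp b$ (hence $o\equiv p$).} Rank~$3$ forces $o\in\langle b,q,d\rangle$, and in fact $o\in by$ because $\proj_{\langle b,q,d\rangle}(x)=by$. Since $w\perp b$ and $w\perp y$, you get $o\perp w$. Thus $\theta_o(ox)=ow$ and $\eta_{p,o}(x)=w$.

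Your one-coordinate move $(p,b)\to(p',b)$ needs exactly this dichotomy, according to whether $p'\equiv p$ or $p'\perp p$.

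Finally, the connectivity you expected to be delicate is immediate, and no residue of ``$\langle d,q,x\rangle$'' (which is not even singular) is needed. For another pair $(p',b')$, at most one of $p,b$ is collinear to $b'$. Otherwise $b'\in\langle p,q,d\rangle\cap\langle b,q,d\rangle=dq$, contradicting $p'\perp dq$ and $p'\not\perp b'$. So you can pass $(p,b)\to(p,b')\to(p',b')$, applying the two cases at each step.
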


\begin{proof}
The existence and uniqueness of $\eta_{p,b}$ follow from Lemma \ref{lemma2}. 
Now let $o$ be another point collinear to all of $q,d,x$, and assume first that $o$ is opposite both $p$ and $b$. 
Let $\theta_p$, $\theta_b$, and $\theta_o$ be the root elations of $\Res(p)$, $\Res(b)$, and $\Res(o)$, respectively, for which the action on the lines through $p,b,o$, respectively, collinear to $q$ (or $d$) coincides with the action of $\eta$. 
By Corollary \ref{cor1}, $\theta_p$ copied from $p$ to $b$ is $\theta_b$; $\theta_b$ copied from $b$ to $o$ is $\theta_o$, and $\theta_p$ copied from $p$ to $o$ is again $\theta_o$.
It follows that $\theta_o(ox)$ intersects both $\theta_p(px)$ and $\theta_b(bx)$ nontrivially. 
But $\theta_p(px)=p\eta_{p,b}(x)$ and $\theta_b(bx)=b\eta_{p,b}(x)$. 
Since these two lines are not contained in a plane, the line $\theta_o(ox)$, which intersects both, contains the intersection point $\eta_{p,b}(x)$. 
Hence $\eta_{p,o}(x)=\eta_{p,b}(x)=\eta_{o,b}(x)$. 

Now assume that $o$ is collinear to $b$. 
Note that $o$ is then opposite $p$. 
We denote the plane spanned by $q,d,b$ by $\beta$. 
Since $\Delta$ has rank $3$, $o$ has to be contained in $\beta$, and since $\proj_\beta(x)=by$, we have $o\in by$. 

Since $\eta_{p,b}(x)$ is also collinear to $y$, we find that $o\perp \eta_{p,b}(x)$. 
Hence $\eta_{p,b}(x)=\eta_{p,o}(x)$. 

Now let $p',b'$ be two other opposite points collinear to $p,b,x$. 
At most one of $p,b$ is collinear to $b'$, say $p$ is opposite $b'$. 
By the foregoing, $\eta_{p,b}(x)=\eta_{p,b'}(x)=\eta_{p',b'}(x)$, and the lemma is proved.
\end{proof}

\begin{prop}\label{DefinitionOfRootElationFirstKindOfRootRank3} 
Let $d,q$ be two collinear points in the polar space $\Delta$. Let $m,m'$ be two collinear points of $\Delta$ not collinear to $q$, but such that $d\in mm'$. Using the notation of the previous lemmata, the automorphism $\phi \colon \Delta \rightarrow \Delta$ defined by
\begin{equation*} 
\phi(x) :=\begin{cases}
  x,  & \text{if } x \text{ is in a plane with } dq, \\
  \eta(x), & \text{if } x \perp q \text{ or } x \perp d \text{, not in a plane with } dq, \\
  \eta_{p,b}(x), & \text{if } x \equiv d, x \equiv q,\, p\equiv b,  x,d,q\in p^\perp\cap b^\perp,
\end{cases} 
\end{equation*}
is a well-defined collineation on the whole polar space $\Delta$ that fixes all planes through $dq$ pointwise, stabilizes all lines through either $d$ or $q$, and maps $m$ to $m'$.
\end{prop}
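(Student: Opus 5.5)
The plan is to verify in turn that $\phi$ is well defined, that it is a bijection, and that it preserves collinearity in both directions; the stated fixing and stabilizing properties then follow directly from the definition. The point set splits into three disjoint classes: points in a plane with $dq$, i.e.\ $\<d,q\>^\perp$; points of $(d^\perp\cup q^\perp)\setminus\<d,q\>^\perp$; and points opposite both $d$ and $q$. Since $\Delta$ has rank $3$, a point outside $\<d,q\>^\perp$ that is collinear to $d$ or to $q$ lies in $d^\perp \cup q^\perp$ and is handled by $\eta$. A point $x$ opposite both $d$ and $q$ is collinear to a unique point $y$ of $dq$, and there exist opposite points $p,b$ collinear to $q,d,x$. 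For instance, take $p,b$ opposite in the polar space $\<d,q,x\>^\perp$ on the residue side, using that $\Res_\Delta(\<y,x\>)$ is thick and non-degenerate. By Lemma~\ref{lemma3}, $\eta_{p,b}(x)$ does not depend on the choice of $p,b$, so $\phi$ is well defined on the third class. On the first two classes $\phi$ agrees with $\eta$ of Lemma~\ref{lemma1}, because $\eta$ fixes $d^\perp\cap q^\perp$. The map also preserves each class: $\eta_{p,b}$ is a root elation of $p^\perp\cap b^\perp$ fixing $dq$ pointwise, so it maps points opposite $d,q$ to points opposite $d,q$. Bijectivity follows by building the analogous map $\phi'$ from $\eta^{-1}$, which is the permutation of Lemma~\ref{lemma1} with $m'\mapsto m$, and checking $\phi'\circ\phi=\mathrm{id}$ class by class, using the uniqueness statements of Lemmas~\ref{lemma1} and~\ref{lemma3}.

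Next comes preservation of collinearity for a pair $x\perp z$, split by the classes of $x$ and $z$. If both lie in $d^\perp\cup q^\perp$, this is Lemma~\ref{lemma1}(iv). If $x$ is opposite $d,q$ and $z\in d^\perp\cup q^\perp$, I would choose $p,b$ collinear to $d,q,x$ such that $z\in p^\perp\cap b^\perp$ as well. Then $\eta_{p,b}$, being a collineation of $p^\perp\cap b^\perp$ that agrees with $\eta$ on points collinear to $d$ or $q$ (Lemma~\ref{lemma3}), maps $z$ to $\eta(z)$ and preserves $x\perp z$. Such $p,b$ should exist: the points collinear to $d,q,x,z$ form $\<d,q,x,z\>^\perp$, and I would pick $p,b$ as opposite points in it. When $\<d,q,x,z\>$ is too large, as in rank $3$, I would instead pass through the line $xz$ and a projection argument. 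If both $x$ and $z$ are opposite $d,q$, I choose $p,b$ collinear to $d,q,x,z$ when possible. Otherwise I route through an intermediate point $w$ on the line $xz$, or in a plane through $xz$, that lies in $d^\perp\cup q^\perp$; since $\phi$ maps the line $xz$ consistently, collinearity of images follows. Concretely, $xz$ meets $d^\perp$ in exactly one point $w$, because $d^\perp$ is a hyperplane. The earlier case then gives $\phi(x)\perp\phi(w)$ and $\phi(z)\perp\phi(w)$. To get $\phi(x)\perp\phi(z)$ I still need to show that $\phi$ maps lines to lines, which I would do by checking it inside a single quadrangle $p^\perp\cap b^\perp$ containing the whole line $xz$. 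The inverse $\phi'$ gives the reverse implication, so $\phi$ is a collineation.

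Finally, the listed properties are read off directly. Planes through $dq$ are fixed pointwise by the first clause. Lines through $d$ or $q$ are stabilized by Lemma~\ref{lemma1}(iii), since $\eta$ induces translations with center $d$ or $q$ in the relevant planes. Also $\phi(m)=\eta(m)=m'$. The main obstacle is the existence of a suitable pair $p,b$ simultaneously collinear to $d,q$ and two given collinear points. In rank $3$ the point set $\<d,q,x,z\>^\perp$ may be too small, for example when $x,z,y$ span a plane. I would handle this by working in the quadrangle $\Res(y)$, where the plane $\<y,x,z\>$ and the line $dq$ become a point and a line, and using Lemma~\ref{lemma3} together with the connectivity argument of Lemma~\ref{lemma1} to move between pairs $(p,b)$.
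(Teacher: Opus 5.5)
Your skeleton matches the paper's: well-definedness from Lemma~\ref{lemma3}, the inverse obtained by swapping $m$ and $m'$, and, for a line $L$ opposite $dq$, choosing opposite $p,b\in L^\perp\cap(dq)^\perp$ so that $\phi(L)=\eta_{p,b}(L)$. The gap is the case you yourself flag as the main obstacle, and the remedy you sketch cannot close it.

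Let $L$ be a line not meeting $dq$ such that some $e\in dq\setminus\{d,q\}$ is collinear to all of $L$. This is your case where $x,z,y$ span a plane, and your mixed case with $z\in\<d,q\>^\perp\setminus dq$ is of this type too. Any point collinear to $d$, $q$ and two points of $L$ is collinear to $e$ and to all of $L$, so it lies in $\<e,L\>^\perp$. Since $\<e,L\>$ is a maximal singular subspace in rank $3$, we have $\<e,L\>^\perp=\<e,L\>$. So any two such points are collinear, and \emph{no} quadrangle $p^\perp\cap b^\perp$ contains $dq$ together with two points of $L$.

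Moving between pairs $(p,b)$ via Lemma~\ref{lemma3} only shows that each single image $\phi(x)$ is independent of the pair; it never relates $\phi(x)$ to $\phi(z)$. Routing through $w=L\cap d^\perp$ does not help either. Here $w$ is the fixed point $f$ described below, the pair $(x,f)$ faces the same obstruction, and two points collinear to a common point need not be collinear.

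The missing idea is to replace the opposite pair by a single point and its residue. The unique point $f$ of $L$ collinear to $q$ is also collinear to $e$, hence to all of $dq$. So $\phi(f)=f$, and every $x\in L\setminus\{f\}$ is opposite $d$ and $q$.

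Now take a point $o$ collinear to $d,q,x$ and opposite the chosen $p,b$. Here this can be arranged by taking $p,b$ on lines of the trace of $dq$ and $ex$ in the quadrangle $\Res(e)$ other than $ef$. The proof of Lemma~\ref{lemma3} then shows that the root elation $\theta_o$ of $\Res(o)$ agreeing with $\eta$ (obtained by copying, cf.\ Corollary~\ref{cor1}) satisfies $\phi(x)\in\theta_o(ox)$. With $o=f$ this gives $\phi(x)\in\theta_f(fx)=\theta_f(L)$ for all $x\in L\setminus\{f\}$, hence $\phi(L)\subseteq\theta_f(L)$; the inverse map gives equality.

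This is exactly how the paper handles this case. Without it your argument does not show that $\phi$ maps lines to lines, so it does not establish that $\phi$ is a collineation.

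A minor slip: the opposite pairs $p,b$ needed for well-definedness come from the trace of $dq$ and $yx$ in the quadrangle $\Res(y)$, not from $\Res_\Delta(\<y,x\>)$.
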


\begin{proof}
We have already proved that $\phi$ is well-defined. Since interchanging the roles of $m$ and $m'$ clearly results in a two-sided inverse map, we deduce that $\phi$ is bijective. It suffices to prove that $\phi$ maps lines into lines. 

This is certainly true for lines intersecting the line $dq$ and for lines collinear to either $q$ or $d$. There are two possibilities left.
\begin{compactenum}[$(i)$]
\item Let $L$ be a line collinear to a unique point $e\in dq\setminus\{d,q\}$. 
The projection $f$ of $q$ onto $L$ is collinear to $dq$. 
Then $f$ is fixed. 
For each point $x\in L\setminus\{f\}$, Lemma \ref{lemma3} ensures that $\phi(x)$ is contained in the line $\theta_f(L)$ (with the notation of the proof of Lemma \ref{lemma3}). 
\item Let $L$ be opposite $dq$. 
We can then pick two opposite points $p,b$ collinear to both $L$ and $qd$. 
Then, by Lemma \ref{lemma3} again, $\phi(L)=\eta_{p,b}(L)$ is a line.  
\end{compactenum} 
The proposition is proved.
\end{proof}
The arbitrariness of $m$ and $m'$ now yields the following consequence.
\begin{coro}\label{PolarSpaceFirst}
Let $\Delta$ be a polar space of rank $3$. Then all roots of the second kind are Moufang.
\end{coro}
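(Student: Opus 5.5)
The plan is to unfold Definition~\ref{MoufangDefn1} for a root $\alpha$ of the second kind in a rank~$3$ polar space and reduce it to Proposition~\ref{DefinitionOfRootElationFirstKindOfRootRank3}. Let $\alpha$ come from a polar frame $\{p_{\pm1},p_{\pm2},p_{\pm3}\}$. After renumbering, $\alpha$ is obtained by removing two collinear frame points, say $p_{-1}$ and $p_{-2}$. Then the interior $\alpha^+$ consists of the singular subspaces of the apartment lying in the two planes through the line $dq$, where $d:=p_1$ and $q:=p_2$. Hence $\alpha^+$ consists of the planes $\langle d,q,p_3\rangle$ and $\langle d,q,p_{-3}\rangle$ together with their lines and points.

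First I describe the apartments containing $\alpha$. Any such apartment is determined by replacing $p_{-1},p_{-2}$ by two points $p'_{-1},p'_{-2}$ satisfying the same collinearity relations with the remaining frame points. In particular, $p'_{-2}$ lies in $\{p_1,p_3,p_{-3}\}^\perp$ and is not collinear to $q$. So the frame is determined once we choose one point $m'$ on a line through $d$ that is not in a plane with $dq$, for instance on the line $d p_{-2}$. Transitivity on apartments containing $\alpha$ therefore reduces to the following claim: for $m:=p_{-2}$ and any $m'$ on $dm\setminus\{d\}$ with $m'\not\perp q$, there is a collineation mapping $m$ to $m'$ that fixes $\alpha^+$ pointwise and stabilizes every singular subspace meeting $\alpha^+$. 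One also has to check that such a map sends $p_{-1}$ to the correct partner of $m'$. This follows because $p'_{-1}$ is determined as the unique point of the line $q p_{-1}$ collinear to $m'$, and incidence is preserved.

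Proposition~\ref{DefinitionOfRootElationFirstKindOfRootRank3} supplies exactly this collineation $\phi$. It fixes all planes through $dq$ pointwise, so $\alpha^+$ is fixed pointwise. It stabilizes all lines through $d$ or $q$, and it maps $m$ to $m'$. It remains to verify that every singular subspace meeting $\alpha^+$ nontrivially is stabilized. Such a subspace contains a point $z$ of one of the two planes through $dq$. The case I expect to need care is $z\notin\{d,q\}$. For $z\in dq$, the subspace is a line or plane through a point of $dq$, and I argue from property (iii) of Lemma~\ref{lemma1}: $\eta$ acts in each plane through $d$ (or $q$) as an elation with center $d$ (or $q$), and $\phi$ fixes every plane through $dq$. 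For $z$ off $dq$, I would instead restrict attention to subspaces of the apartment, that is, the subspaces relevant to transitivity. These are spanned by fixed frame points and points of $\alpha^+$, and they are stabilized because their spanning points are fixed or lie on stabilized lines through $d$ or $q$.

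Finally, the arbitrariness of $m'$ (and, by symmetry, of the choice of which two frame points are removed) gives transitivity for every root of the second kind. The main obstacle is the bookkeeping in the last step: showing that the stabilization condition of Definition~\ref{MoufangDefn1} follows from the properties listed in Proposition~\ref{DefinitionOfRootElationFirstKindOfRootRank3}. The route I would try first is the Observation following Definition~\ref{MoufangDefnInfty}, which identifies such root elations with line elations of center $(d,dq,q)$, so that (M2) is what is really being established.
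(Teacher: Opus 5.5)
Your reduction is the paper's: its entire proof is to apply Proposition~\ref{DefinitionOfRootElationFirstKindOfRootRank3} with $m'$ arbitrary. Your parametrization of the apartments containing $\alpha$ by $m'\in dm\setminus\{d\}$ is fine; note that $p'_{-2}$ lies on $dp_{-2}$ necessarily, not ``for instance'', because that line belongs to $\alpha$. Your check that $\phi(p_{-1})$ is the partner of $m'$ on $qp_{-1}$ is also fine.

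What fails is your verification of the stabilization condition. You read it as requiring that every singular subspace containing a point of $\langle d,q,p_3\rangle$ or $\langle d,q,p_{-3}\rangle$ be stabilized. That is false for every nontrivial $\phi$. The point $p_3$ is fixed, while $\phi(p_{-1})=p'_{-1}\neq p_{-1}$ on $qp_{-1}$. So the apartment line $p_3p_{-1}$ is mapped to $p_3p'_{-1}\neq p_3p_{-1}$, and the plane $\langle p_{-1},p_{-2},p_3\rangle$ is moved too. Hence your claim that apartment subspaces are stabilized because ``their spanning points are fixed or lie on stabilized lines through $d$ or $q$'' is wrong. A point moving along a stabilized line drags along every subspace it spans with fixed points, and restricting to the apartment does not help.

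Likewise, Lemma~\ref{lemma1}(iii) only concerns planes through $d$ or $q$. It says nothing about a line through a point $z\in dq\setminus\{d,q\}$ that is not collinear with $d$ or $q$. Such lines are in general moved, for example in the symplectic polar space of rank $3$ over a field of characteristic $\neq 2$.

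The condition has to be read with the building-theoretic interior. As the paper's figure for the second kind shows:
\begin{itemize}
\item the only points of $\alpha^+$ are $d$ and $q$, since $p_{\pm3}$ lie on the wall;
\item its lines are $dq$, $dp_{\pm3}$, $qp_{\pm3}$;
\item its planes are $\langle d,q,p_{\pm3}\rangle$.
\end{itemize}
Moreover, ``intersecting $\alpha^+$'' must mean being incident with one of these members, not merely sharing a point with one; the line $p_3p_{-1}$ meets $dp_3$ but need not be stabilized.

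A subspace incident with a member of $\alpha^+$ either lies in one of the two planes through $dq$, which $\phi$ fixes pointwise, or contains $d$ or $q$. Every singular subspace through $d$ or $q$ is stabilized: $\phi$ stabilizes all lines through $d$ and $q$, and every plane through $d$ (resp.\ $q$) is spanned by two such lines. This is exactly the line-elation property you point to at the end via the Observation after Definition~\ref{MoufangDefnInfty}, and it is what the paper tacitly uses. It should replace your case distinction on $z$, not supplement it.
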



\subsection{First kind of root}\label{PointElationsExtRank3}

\begin{lem}\label{PolarSpaceSecond}
Let $\Delta$ be a polar space of rank $3$. Then all roots of the first kind are Moufang.
\end{lem}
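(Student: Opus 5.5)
Let $c$ be a point of $\Delta$ and $M,N$ two noncoplanar lines through $c$. Let $a,\hat a$ be points opposite $c$ with $\proj_M(a)=\proj_M(\hat a)=:m$ and $\proj_N(a)=\proj_N(\hat a)=:n$. We must produce a collineation $\phi$ of $\Delta$ that stabilizes every singular subspace through $c$, fixes $M\cup N$ pointwise, and maps $a$ to $\hat a$. By the ``$\Leftarrow$'' direction of the proof of Proposition~\ref{MoufangDefn3}, this is exactly the root elation of a root of the first kind. The plan mirrors Subsection~\ref{LineElationsExtRank3}: first define $\phi$ on $c^\perp$, then extend it to points opposite $c$ using the quadrangle elations of Lemma~\ref{GQSecond}, and finally check consistency.

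\textbf{Step 1 (action on $c^\perp$).} We let $\phi$ fix every point of $M^\perp\cup N^\perp$ that lies in $c^\perp$. Consider the quadrangle $\Res(c)$. The lines $M,N$ are opposite points of $\Res(c)$ (they are noncoplanar), and $a$ determines, via the planes $\<c,\proj\>$, a point of $\Res(c)$ opposite both. Concretely, $\phi$ acts on the lines through $c$ as the root elation of $\Res(c)$ with root $(M,\<M,\ldots\>,c,\ldots,N)$, i.e. fixing all lines through $c$ collinear to $M$ or $N$. Since we want \emph{all} singular subspaces through $c$ stabilized, the action on $\Res(c)$ must in fact be trivial, so on $c^\perp$ we take $\phi$ to be the identity. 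This is consistent with fixing every singular subspace through $c$: each point of $c^\perp$ lies on a line through $c$, and $\phi$ should act on that line as a translation with center $c$. The actual content is therefore the action on such lines. Take a plane $\pi$ through $c$; then $\phi|_\pi$ should be an elation with center $c$ and axis $\proj_\pi(\ldots)$. I would determine this action by copying, as in Lemma~\ref{lemma1}, starting from the elation of $\<c,m,a'\>$-type planes induced by the requirement $\phi(a)=\hat a$.

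\textbf{Step 2 (points opposite $c$).} For $x$ opposite $c$, we choose two opposite points $p,b$ with $p,b\in c^\perp$ such that $x,c$ and two points of $M,N$ lie in $p^\perp\cap b^\perp$. More simply, we choose $p\in M\setminus\{c\}$ and $b\in N\setminus\{c\}$ when these are opposite. In that case $\Gamma=p^\perp\cap b^\perp$ is a quadrangle containing $c$, $x$, $a$, and the lines $c\,\proj_{\ldots}$. By Lemma~\ref{GQSecond} there is a unique root elation $\theta_{p,b}$ of $\Gamma$ fixing the lines of $\Gamma$ through $c$ pointwise, and it is realized as the self-projectivity $p\per b\per j\per \ell\per p$. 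We define $\phi(x):=\theta_{p,b}(x)$, where $\theta_{p,b}$ is normalized so that it agrees with $\phi$ from Step 1 on $c^\perp\cap\Gamma$. This is the analogue of Lemma~\ref{lemma2} and Corollary~\ref{cor1}. The normalization is possible because $\Gamma$ is Moufang (Corollary~\ref{GQMoufang}), and it yields $\phi(a)=\hat a$ for a suitable choice.

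\textbf{Step 3 (independence and collineation).} This is the main obstacle: we must show that $\phi(x)$ does not depend on the choice of $(p,b)$, as in Lemma~\ref{lemma3}. I would first compare two pairs that share one point, $(p,b)$ and $(p,o)$ with $o$ opposite $p$ and $b$. The elation on $\Res(p)$ copied to $\Res(b)$ and to $\Res(o)$ gives, through Corollary~\ref{cor1}-style copying, three lines $\theta_p(px)$, $\theta_b(bx)$, and $\theta_o(ox)$ that pairwise meet. Since $px$ and $bx$ are not coplanar, their intersection is forced to be the same point. The degenerate case $o\perp b$ is handled as in Lemma~\ref{lemma3}, using rank $3$, which places $o$ in a specific line of the plane through $b$. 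Connectivity of the set of admissible pairs then gives well-definedness. Bijectivity follows by swapping $a$ and $\hat a$. Preservation of lines is checked by the case split of Proposition~\ref{DefinitionOfRootElationFirstKindOfRootRank3}, according to whether a line meets $c^\perp$ in a point or lies opposite $c$. In each case we choose $p,b$ collinear with the whole line, so that $\phi$ restricted to the line equals the collineation $\theta_{p,b}$. Finally, transitivity in (M1) follows because $\hat a$ was arbitrary subject to the projection conditions.
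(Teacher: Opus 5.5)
\textbf{There is a genuine gap, and it starts in Step~1.} You claim the point elation can be taken to be the identity on $c^\perp$, or at least to fix $M^\perp\cup N^\perp$ pointwise. This is false in general. It is the picture of symplectic transvections, but in the parabolic quadric $Q(6,K)$ with $\operatorname{char}K\neq 2$ the only collineation fixing $c^\perp$ pointwise is the identity. There, a root elation of the first kind with center $c$ fixes pointwise only those lines through $c$ that form a grid (a hyperbolic subquadrangle) of $\Res(c)\cong Q(4,K)$. Consequently it fixes pointwise only two of the planes through any such line, and it moves points on every other line through $c$.

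The same error reappears in Step~2, where you ask for a root elation of $\Gamma$ fixing \emph{all} lines of $\Gamma$ through $c$ pointwise. That is a symmetry about $c$, which $Q(4,K)$ with $\operatorname{char}K\neq 2$ does not admit. So your normalization forces $\theta_{p,b}=\mathrm{id}$ and hence $\phi(a)=a$. You partly retract the claim (``$\phi$ should act on that line as a translation''), but the action of $\phi$ on $c^\perp$ is then never actually constructed. Both your normalization of $\theta_{p,b}$ and your line check depend on it, so this is not a detail but the core of the problem.

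\textbf{Independently of Step~1, the quadrangles $p^\perp\cap b^\perp$ with $p\in M$, $b\in N$ cannot carry the construction.}
\begin{itemize}
\item For $x$ opposite $c$ the pair is forced: $p=\proj_M(x)$ and $b=\proj_N(x)$. So $a\in\Gamma$ only when $x$ has the same projections as $a$.
\item Moreover $\Gamma\cap(M\cup N)=\{c\}$, so your data do not determine which lines of $\Gamma$ through $c$ the root elation should fix.
\item For a general (M1)-elation, the restriction to $m^\perp\cap n^\perp$ need not be a root elation of that quadrangle at all. So Lemma~\ref{GQSecond} is not the right source.
\end{itemize}

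Consequently Step~3 is misdirected. There is no freedom in $(p,b)$ to compare as in Lemma~\ref{lemma3}. The real issue, compatibility between the different quadrangles $\proj_M(x)^\perp\cap\proj_N(x)^\perp$, is left open. Also, if a line $K\not\subseteq c^\perp$ meets $c^\perp$ in $k$, then a point of $M$ collinear to all of $K$ forces $k\perp M$. So generically no single $\Gamma_{p,b}$ contains $K$, and your line check fails.

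\textbf{The missing idea} is to anchor everything on two pointwise fixed planes $\alpha,\beta$ with $\alpha\cap\beta=\{o\}$, and to work in point residues rather than in quadrangles $p^\perp\cap b^\perp$. The paper proceeds as follows.
\begin{itemize}
\item For $x\in\alpha$ it takes the root elation $\eta_x$ of $\Res(x)$ with center $ox$, and copies it to the points of $\beta$ opposite $x$.
\item It proves independence of $x$, using that a root elation of $\Res(z)$ with center $oz$ is determined by the image of one plane not through $oz$.
\item For $w$ opposite $o$ it sets $\phi(w)=\eta_a(\<w,A\>)\cap\eta_b(\<w,B\>)$, where $A=\proj_\alpha(w)$ and $B=\proj_\beta(w)$. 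Points of $o^\perp$ are treated separately.
\end{itemize}
Since every line not in $o^\perp$ is collinear to a unique point of $\alpha$ and a unique point of $\beta$, the line check then goes through.
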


\begin{proof}
Let $\alpha$ and $\beta$ be two planes intersecting in a point $o$. Let $L$ and $M$ be two opposite lines in $\alpha$ and $\beta$, respectively, and let $p$ and $p'$ be two points opposite $o$ in $L^{\perp} \cap M^{\perp}$ (cf. Fig.\ \ref{rankthreesecondkindproofsetup}). We aim to construct a collineation $\phi$ of $\Delta$ stabilizing all lines through $o$, fixing $\alpha\cup\beta$ pointwise, and mapping $p$ to $p'$.

Let $x$ be a point on $L$. Let $\eta_x$ be the root elation of $\Res(x)$ fixing all lines of $\alpha$ through $x$, fixing all lines through $x$ that intersect $\beta$, stabilizing all planes through $ox$, and mapping $xp$ to $xp'$. 
We can copy that action onto every point $y\in\beta$ opposite $x$; meaning, we define $\eta_y$ as the unique collineation of $\Res(y)$ mapping any line $K$ through $y$ to $\proj_y(\eta_x(\proj_x K))$, for any $y \in \beta \setminus \proj_{\beta}(x)$ (cf. Fig.\ \ref{definingetay}). 
Note that — since $\eta_x$ fixes all lines through $x$ that intersect $\beta$ — $\eta_y$ fixes $\proj_{\beta}(x)$ pointwise and thus stabilizes all lines through $y$ in $\beta$.

\begin{center}
\begin{tabular}{>{\centering\arraybackslash} m{4.3cm}  >{\centering\arraybackslash} m{5.3cm}}
\resizebox{3.5cm}{!}{
\begin{tikzpicture}[scale=1.2]
\begin{scope}[xscale=1.35, yscale=1.3]
\coordinate (o) at (0, 1, 0);  
\coordinate (n) at (-1, 0, 0); 
\coordinate (q) at (1, 0, 0);  
\coordinate (u) at (0, 0, 0.8);  
\coordinate (d) at (0, 0, -0.8); 
\coordinate (p) at (0, -1, 0); 
\coordinate (p') at (0, -1.5, 0); 

\draw[] (q) -- (o);
\draw[] (o) -- (n);
\draw[] (n) -- (p);
\draw[] (p) -- (q);
\draw[opacity=0.3] (q) -- (u); 
\draw[] (o) -- (u);
\draw[] (n) -- (u);
\draw[] (p) -- (u);
\draw[opacity=0.3] (q) -- (d);
\draw[] (o) -- (d);
\draw[opacity=0.3] (n) -- (d);
\draw[] (p) -- (d);
\draw[] (p') -- (d);
\draw[] (p') -- (n);
\draw[] (p') -- (u);
\draw[] (p') -- (q);

\coordinate (alpha) at ($ (n) + 0.5*(o) - 0.5*(n) $); 
\coordinate (beta) at ($ (q) + 0.5*(o) - 0.5*(q) $); 
\coordinate (L) at ($ (n) + 0.5*(u) - 0.5*(n) $); 
\coordinate (M) at ($ (q) + 0.5*(d) - 0.5*(q) $); 
\end{scope}

\fill[black, opacity=0.1] (o) -- (n) -- (u) -- cycle; 
\fill[black, opacity=0.1] (o) -- (d) -- (q) -- cycle; 
\draw[thick] (n) -- (u); 
\draw[thick] (d) -- (q); 

\node at (o) [circle, fill, inner sep=1pt, label={[label distance=-1mm]above:{\small \( o \)}}] {};
\node at (q) [circle, fill, inner sep=1pt, label={[label distance=-1mm]above right:{\small \(  \)}}] {};
\node at (n) [circle, fill, inner sep=1pt, label={[label distance=-1mm]left:{\small \(  \)}}] {};
\node at (u) [circle, fill, inner sep=1pt, label={[label distance=-2.5mm]below left:{\small \( x \)}}] {};
\node at (d) [circle, fill, inner sep=1pt, label={[label distance=-1.5mm]above right:{\small \( y \)}}] {};
\node at (p) [circle, fill, inner sep=1pt, label={[label distance=-1mm]below:{\small \( p \)}}] {};
\node at (p') [circle, fill, inner sep=1pt, label={[label distance=-1mm]below:{\small \( p' \)}}] {};

\node at (alpha) [label={[label distance=-1mm, color=gray]above left:{\small \( \boldsymbol{\alpha} \)}}] {};
\node at (beta) [label={[label distance=-1mm, color=gray]above right:{\small \( \boldsymbol{\beta} \)}}] {};
\node at (L) [label={[label distance=-1mm, yshift=-1mm, xshift=1mm]above right:{\small \( L \)}}] {};
\node at (M) [label={[label distance=-1mm]above right:{\small \( M \)}}] {};
\end{tikzpicture}}
&
\resizebox{4.4cm}{!}{
\begin{tikzpicture}[scale=1.2]
\begin{scope}[xscale=1.35, yscale=1.3]
\coordinate (o) at (0, 1, 0);  
\coordinate (n) at (-1, 0, 0); 
\coordinate (q) at (1, 0, 0);  
\coordinate (u) at (0, 0, 0.8);  
\coordinate (d) at (0, 0, -0.8); 
\coordinate (p) at (0, -1, 0); 
\coordinate (p') at (0, -1.5, 0); 

\draw[] (q) -- (o);
\draw[] (o) -- (n);
\draw[] (o) -- (u);
\draw[] (n) -- (u);
\draw[] (o) -- (d);

\coordinate (k) at (0, -0.6, -0.1); 
\draw[thick] (d) -- (k);
\coordinate (K) at ($ (k) + 0.5*(d) - 0.5*(k) $); 
\coordinate (XK) at ($ (k) + 0.5*(d) - 0.5*(k) $); 
\draw[] (XK) -- (u);
\coordinate (k') at (0.6, -0.4, 0.1);
\draw[] (k') -- (u);
\coordinate (YK') at ($ (k') + 0.2*(u) - 0.2*(k') $); 
\draw[thick] (YK') -- (d);
\end{scope}

\fill[black, opacity=0.1] (o) -- (n) -- (u) -- cycle; 
\fill[black, opacity=0.1] (o) -- (d) -- (q) -- cycle; 
\draw[thick] (n) -- (u); 
\draw[thick] (d) -- (q); 

\node at (o) [circle, fill, inner sep=1pt, label={[label distance=-1mm]above:{\small \( o \)}}] {};
\node at (q) [circle, fill, inner sep=1pt, label={[label distance=-1mm]above right:{\small \(  \)}}] {};
\node at (n) [circle, fill, inner sep=1pt, label={[label distance=-1mm]left:{\small \(  \)}}] {};
\node at (u) [circle, fill, inner sep=1pt, label={[label distance=-2.5mm]below left:{\small \( x \)}}] {};
\node at (d) [circle, fill, inner sep=1pt, label={[label distance=-1.5mm]above right:{\small \( y \)}}] {};

\node at (k) [circle, fill, inner sep=1pt, label={[label distance=-1mm]below:{\small \(  \)}}] {};
\node at (K) [label={[label distance=-1mm, yshift=3mm, xshift=1mm]left:{\small \( K \)}}] {};
\node at (XK) [circle, fill, inner sep=1pt, label={[label distance=-1mm]below right:{\small \(  \)}}] {};
\node at (k') [circle, fill, inner sep=1pt, label={[label distance=-1mm]right:{\small \( \eta_x(\proj_xK) \)}}] {};
\node at (YK') [circle, fill, inner sep=1pt, label={[label distance=-1mm]above right:{\small \(  \)}}] {};
\end{tikzpicture}} \\
\captionof{figure}{The basic set up for the proof of Lemma \ref{PolarSpaceSecond}.}\label{rankthreesecondkindproofsetup} &  \captionof{figure}{Visualization for the beginning of the proof of Lemma \ref{PolarSpaceSecond}.}\label{definingetay}
\end{tabular}
\end{center}

The collineation $\eta_x$ defines an action on all lines through $x$, and the collineations $\eta_y$ ($y \in \beta \setminus \proj_{\beta} (x)$) “extend” this action to all lines containing some point of $\beta \setminus \proj_{\beta} (x)$. 
We want to show that the action is independent of the choice of $x \in \alpha \setminus \{o\}$.

For that, let $z \in \alpha$ be a point such that $z\neq x$ and such that the line $xz$ does not contain $o$. 
For any $y\in\beta$ opposite $z$, let $\eta_{z,y}$ be the action of $\eta_y$ copied onto $z$. 
Then $\eta_{z,y}$ is always a root elation in $\Res(z)$ with center $oz$ and axes $\alpha$ and $\<z,\proj_\beta(z)\>$. 
Furthermore, each plane through the line $xz$ has the same image under $\eta_x$ and under $\eta_{z,y}$. 
Since a root elation in $\Res(z)$ with center $oz$ is determined by the image of any plane not through $oz$, we conclude that $\eta_z:=\eta_{z,y}$ is independent of $y$. 
Interchanging the roles of $x$ and $z$, we conclude that there exists a unique root elation $\eta_a$ of $\Res(a)$ for each point $a$ of $\alpha$, such that the set of all such root elations, together with all $\eta_b$ for $b\in\beta$, is closed under copying from points to opposite points.

Now let $w$ be an arbitrary point of $\Delta$ opposite $o$. 
Then $w\notin\alpha\cup\beta$, and we set $A:=\proj_\alpha(w)$ and $B:=\proj_\beta(w)$ (cf. Fig.\ \ref{rankthreesecondkindforw}). 
Select $a\in A$ and let $\pi$ be the plane spanned by $w$ and $A$. 
Set $\pi':=\eta_a(\pi)$. 
Select $b\in B$ and let $\sigma$ be the plane spanned by $B$ and $w$. 
By copying, $\sigma':=\eta_b(\sigma)$ intersects $\pi'$ in a unique point, which we define to be $\phi(w)$. 
Note that, by copying, $\eta_a(aw)=a\phi(w)$ for all $a\in A$, and $\eta_b(bw)=b\phi(w)$ for all $b\in B$.

\begin{center}
\begin{tikzpicture}[scale=1.2]
\begin{scope}[xscale=1.35, yscale=1.3]
\coordinate (o) at (0, 1, 0);  
\coordinate (n) at (-1, 0, 0); 
\coordinate (q) at (1, 0, 0);  
\coordinate (u) at (0, 0, 0.8);  
\coordinate (d) at (0, 0, -0.8); 
\coordinate (p) at (0, -1, 0); 
\coordinate (p') at (0, -1.5, 0); 

\draw[] (q) -- (o);
\draw[] (o) -- (n);
\draw[] (o) -- (u);
\draw[] (n) -- (u);
\draw[] (o) -- (d);

\coordinate (wa) at ($ (n) + 0.5*(o) - 0.5*(n) $); 
\draw[thick] (u) -- (wa);

\coordinate (wb) at ($ (q) + 0.5*(o) - 0.5*(q) $); 
\draw[thick] (d) -- (wb);

\coordinate (w) at (0.5, -0.5, -0.5);  
\draw[] (wa) -- (w) -- (wb);
\draw[] (u) -- (w) -- (d);

\fill[black, opacity=0.1] (w) -- (u) -- (wa) -- cycle; 
\fill[black, opacity=0.1] (w) -- (d) -- (wb) -- cycle; 

\coordinate (pi) at (0, -0.5, -0.5);  
\coordinate (sigma) at (0.7, -0.2, -0.5);  

\coordinate (A) at ($ (u) + 0.5*(wa) - 0.5*(u) $); 
\coordinate (B) at ($ (d) + 0.5*(wb) - 0.5*(d) $); 
\end{scope}

\fill[black, opacity=0.1] (o) -- (n) -- (u) -- cycle; 
\fill[black, opacity=0.1] (o) -- (d) -- (q) -- cycle; 
\draw[] (n) -- (u); 
\draw[] (d) -- (q); 

\node at (o) [circle, fill, inner sep=1pt, label={[label distance=-1mm]above:{\small \( o \)}}] {};
\node at (q) [circle, fill, inner sep=1pt, label={[label distance=-1mm]above right:{\small \(  \)}}] {};
\node at (n) [circle, fill, inner sep=1pt, label={[label distance=-1mm]left:{\small \(  \)}}] {};
\node at (u) [circle, fill, inner sep=1pt, label={[label distance=-2.5mm]below left:{\small \(  \)}}] {};
\node at (d) [circle, fill, inner sep=1pt, label={[label distance=-1.5mm]above right:{\small \(  \)}}] {};

\node at (wa) [circle, fill, inner sep=1pt, label={[label distance=-1.5mm]above right:{\small \(  \)}}] {};
\node at (wb) [circle, fill, inner sep=1pt, label={[label distance=-1.5mm]above right:{\small \(  \)}}] {};
\node at (w) [circle, fill, inner sep=1pt, label={[label distance=-1.5mm]below:{\small \( w \)}}] {};

\node at (pi) [label={[label distance=-1.5mm, color=gray]below:{\small \( \boldsymbol{\pi} \)}}] {};
\node at (sigma) [label={[label distance=-1.5mm, color=gray]below:{\small \( \boldsymbol{\sigma} \)}}] {};

\node at (A) [label={[label distance=-1.5mm]left:{\small \( A \)}}] {};
\node at (B) [label={[label distance=-1mm]above right:{\small \( B \)}}] {};
\end{tikzpicture}
\captionof{figure}{Visualization for the second half of the proof of Lemma \ref{PolarSpaceSecond}.}
\label{rankthreesecondkindforw}
\end{center}

Now let $w\perp o$. If $w \in \alpha \cup \beta$, we define $\phi(w)=w$. If $w \not\in \alpha \cup \beta$, there are two possibilities. First, suppose $A:=\proj_\alpha(w)$ and $B:=\proj_\beta(w)$ are not coplanar. For an arbitrary point $a\in A\setminus\{o\}$, we define $\phi(w)=\eta_a(aw)\cap \<w,B\>$. Similarly, as in the previous paragraph, we find that this is independent of $a\in A\setminus\{o\}$, and also $\phi(w)=\eta_b(bw)\cap\<w,A\>$ for all $b\in B\setminus\{o\}$.

Secondly, suppose $A$ and $B$, as defined above, are coplanar. Then for each $a\in A\setminus\{o\}$, we have $\eta_a(aw)=aw$, and we define $\phi(w)=w$.

Hence, in all cases, we have that if $c\in \alpha\cup\beta\setminus\{o\}$ is collinear to $w$, then $\eta_c(cw)=c\phi(w)$.

It remains to show that lines are mapped to lines. If a line $K$ is not collinear to $o$, then it is collinear to unique distinct points $a\in \alpha$ and $b\in\beta$, and the images under $\phi$ of the points of $K$ are all contained in the line $K' := \eta_a(\<a,K\>)\cap\eta_b(\<b,K\>)$. 
Applying the inverse mapping (obtained by interchanging the roles of $p$ and $p'$), we see that $\phi(K)$ coincides with the whole line $K'$.

Finally, let $K$ be collinear to $o$. Each point $w\in K$ is mapped onto a point $\phi(w)\in ow$. Consider a plane $\pi\not\ni o$ through $K$. By the foregoing, the image $\phi(\pi)$ is contained in a plane that intersects $\<o,K\>$ in the set $\{\phi(w)\mid w\in K\}$. Since distinct planes sharing at least two points intersect in lines, we see that $\phi(K)$ is a line.

This completes the proof of the lemma.
\end{proof}

\begin{coro}\label{PolarMoufang}
Let $\Delta$ be a polar space of rank $3$. Then $\Delta$ is Moufang.
Furthermore, every root elation in each point residue can be written as an even self-projectivity of length $4$.
\end{coro}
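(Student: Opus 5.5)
The corollary combines the rank-$3$ results of this section with the quadrangle constructions of Section~\ref{GQ}. For the Moufang property, recall that a root of a rank-$3$ polar space $\Delta$ is either of the first or of the second kind. Corollary~\ref{PolarSpaceFirst} shows that all roots of the second kind are Moufang. Lemma~\ref{PolarSpaceSecond} shows the same for roots of the first kind. Concretely, for a root of the first kind with interior point $o$, the planes $\alpha,\beta$ of the interior through $o$ play the roles in that lemma. Any two apartments containing the root then differ only in the position of the removed point, which is a point opposite $o$ in $L^\perp\cap M^\perp$ for suitable opposite lines $L\subseteq\alpha$, $M\subseteq\beta$. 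The collineation $\phi$ constructed there moves one such point to the other. It fixes $\alpha\cup\beta$ pointwise and stabilizes all lines through $o$, which gives exactly the transitivity required in Definition~\ref{MoufangDefn1}. The second kind is handled analogously via Proposition~\ref{DefinitionOfRootElationFirstKindOfRootRank3}, with $m,m'$ arbitrary. Hence $\Delta$ is Moufang. Alternatively, since the constructed maps are point and line elations, one can verify (M1) and (M2) and invoke Proposition~\ref{MoufangDefn3}.

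For the second assertion, fix a point $x$. Then $\Res_\Delta(x)$ is a generalized quadrangle, and it is isomorphic to $\Gamma=x^\perp\cap y^\perp$ for any $y$ opposite $x$. A root elation of $\Res(x)$ is either of the second kind, fixing a line $\langle x,q\rangle$-pencil situation $(xq,\langle x,q,d\rangle,xd)$, or of the first kind. I would identify $\Res(x)$ with $\Gamma$ and set $p:=x$, $b:=y$. Lemmas~\ref{GQFirst} and \ref{GQSecond} then provide, for each root of $\Gamma$ and each pair of apartments through it, an explicit self-projectivity
\[
p\per b\per j\per \ell\per p
\]
realizing the corresponding root elation. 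This is a self-projectivity of length $4$, and $4$ is even.

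It remains to check that every root elation arises this way, not only those mapping one chosen apartment to another. A root elation of a Moufang quadrangle fixing $\alpha^+$ is determined by the image of a single apartment containing $\alpha$. Indeed, the root group acts sharply transitively on such apartments, which follows from rigidity (or directly in rank $2$ from the fact that an elation fixing $\alpha^+$ and one extra apartment fixes everything by the generalized quadrangle axioms). So the constructed $\theta$ coincides with the given root elation. The identity is covered as the trivial case, for example by the degenerate grid configuration of Observation~\ref{observation2}, or trivially by $p\per b\per b'\per b\per p$-type choices.

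I expect the main obstacle to be this uniqueness step, namely showing that a root elation with prescribed image of one apartment is unique. I would prove it directly: for a second-kind root $(q,qd,d)$, an elation fixing $qd$ pointwise and stabilizing the lines through $q$ and $d$, and also fixing $u$, must fix $n$ and then every point via repeated projections, because $\Gamma$ is generated by the relevant pencils. The first-kind case is handled similarly.
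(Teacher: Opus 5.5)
Your proposal is correct and follows the paper's route: the Moufang property is exactly Corollary~\ref{PolarSpaceFirst} together with Lemma~\ref{PolarSpaceSecond}, and the projectivity assertion comes from Lemmas~\ref{GQFirst} and~\ref{GQSecond} applied to $\Res(x)\cong x^\perp\cap y^\perp$ with $y$ opposite $x$. The paper's one-line proof leaves that second step implicit, and your rigidity argument supplies a point it also does not spell out: a root elation is determined by the image of one apartment through the root, which is what turns ``some root elation is a self-projectivity of length~$4$'' into ``every root elation is''.
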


\begin{proof}
The claim follows from Corollary \ref{PolarSpaceFirst} and Lemma \ref{PolarSpaceSecond}.
\end{proof}

\subsection{Extending to rank $n > 3$} 

\begin{coro}\label{PolarRanknMoufang}
Let $\Delta$ be a polar space of rank $n$. Then $\Delta$ is Moufang.
\end{coro}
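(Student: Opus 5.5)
We argue by induction on $n$. For $n=2$ there is nothing beyond the quadrangle case: a rank-$2$ polar space is a generalized quadrangle, and we take the Moufang property there as given (it is, in any case, the input needed for the induction step below). The case $n=3$ is Corollary~\ref{PolarMoufang}. Let $n\geq 4$. By Proposition~\ref{MoufangDefn3} it suffices to verify (M1) and (M2) for $\Delta$. In both cases the plan is the same: copy the rank-$3$ arguments of Subsections~\ref{LineElationsExtRank3} and~\ref{PointElationsExtRank3}, replacing ``plane'' by ``maximal singular subspace'' and using point residues, which are polar spaces of rank $n-1$ and are therefore Moufang by induction.

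\textbf{(M2).} Fix $p,q$ on a line $L$, and points $m,m'$ on a line through $p$ that is not coplanar with $L$. Mimicking Lemma~\ref{lemma1}, we first define a permutation $\eta$ of $p^\perp\cup q^\perp$:
\begin{itemize}
\item in a singular subspace $\beta$ spanned by $m$ and $\proj_\pi(m)$, for $\pi$ maximal through $L$, let $\eta$ be the projective elation with centre $p$ and axis $\proj_\beta(L)$ mapping $m$ to $m'$;
\item copy this action along opposite lines through $p$ and $q$, exactly as in claims (*) and (**).
\end{itemize}
Well-definedness will again rest on connectivity of the geometry opposite $pq$ in $\Res(p)$. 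Next, as in Lemma~\ref{lemma3}, we extend $\eta$ to points $x$ far from $L$. Pick opposite points $a,b$ collinear to $p,q,x$. By induction, $a^\perp\cap b^\perp$ is a Moufang polar space of rank $n-1$, so it carries a unique line elation agreeing with $\eta$ on $p^\perp\cup q^\perp$. We then show that the image of $x$ does not depend on the choice of $(a,b)$. The comparison uses a third point $o$, and it works exactly as before: two non-coplanar lines $a\eta(x)$ and $b\eta(x)$ are both met by the line $\theta_o(ox)$. Finally, verifying that lines go to lines splits into the two cases of Proposition~\ref{DefinitionOfRootElationFirstKindOfRootRank3}.

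\textbf{(M1).} Fix a point $c$, noncoplanar lines $L,L'$ through $c$, and points $a,\hat a$ opposite $c$ in $x^\perp\cap x'^\perp$. Choose maximal singular subspaces $\alpha\supseteq L$ and $\beta\supseteq L'$ meeting exactly in $c$. For $x\in\alpha\setminus\{c\}$:
\begin{itemize}
\item induction gives a root elation $\eta_x$ of $\Res(x)$, which has rank $n-1$, fixing the relevant root interior and mapping $xa$ to $x\hat a$;
\item copy $\eta_x$ to all $y\in\beta$ opposite $x$, and back to all points of $\alpha$;
\item prove independence of the choices, using that a root elation of $\Res(z)$ with centre $cz$ is determined by the image of a single maximal singular subspace not through $cz$.
\end{itemize}
Then set $\phi(w):=\eta_a(\langle a,w\rangle)\cap\eta_b(\langle b,w\rangle)$ for suitable $a\in\proj_\alpha(w)$ and $b\in\proj_\beta(w)$. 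Points $w\perp c$ are treated separately, as in Lemma~\ref{PolarSpaceSecond}. Collinearity-preservation follows as there.

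\textbf{Expected obstacle.} The main difficulty is the independence statements. In rank~$3$ they were settled by short incidence arguments; for example, in Lemma~\ref{lemma3}, if $o\perp b$ then $o$ lies in the plane $\langle q,d,b\rangle$. In rank $n$ the possible relative positions of $o$ to $a,b$ are more varied. The plan is to reduce to rank~$3$ instead of handling these positions directly: any two choices can be joined by a chain in which consecutive choices both lie in a common rank-$3$ subspace $U^\perp\cap V^\perp$, with $U,V$ opposite $(n-4)$-spaces collinear to everything involved. There, Corollary~\ref{PolarMoufang} and the uniqueness of root elations apply, and consistency then follows by connectivity of the relevant opposition geometry.
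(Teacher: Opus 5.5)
Your overall strategy is the paper's own, which only sketches this step. You induct on $n$, use that residues and subspaces $a^\perp\cap b^\perp$ of rank $n-1$ are Moufang, and re-run the copying arguments of Section~5 with planes replaced by maximal singular subspaces. Your (M2) part is a reasonable generalization of Lemmas~\ref{lemma1}--\ref{lemma3} and Proposition~\ref{DefinitionOfRootElationFirstKindOfRootRank3}. The (M1) part, however, does not work as stated.

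\textbf{The (M1) step proves only a special case.} The construction of Lemma~\ref{PolarSpaceSecond} produces a collineation fixing $\alpha\cup\beta$ pointwise, and it starts from $\eta_x$ only for $x\in\proj_\alpha(p)=\proj_\alpha(p')$. Its hypothesis is that the two points have the \emph{same projections onto $\alpha$ and $\beta$}. This is unavoidable: any collineation fixing $\alpha$ pointwise and sending $a$ to $\hat a$ forces $\alpha\cap a^\perp=\alpha\cap\hat a^\perp$, and likewise for $\beta$. Your requirement $\eta_x(xa)=x\hat a$ only makes sense for $x$ in that common hyperplane.

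Property (M1) gives you much less: only $\proj_L(a)=\proj_L(\hat a)=x$ and $\proj_{L'}(a)=\proj_{L'}(\hat a)=x'$. In general no choice of $\alpha\supseteq L$ repairs this, since you would need an $(n-2)$-dimensional singular subspace through $x$ inside $\{a,c,\hat a\}^\perp$. For a concrete failure, take $Q(6,q)$ with $q$ odd and $\hat a$ suitably chosen opposite $a$. Then $\langle a,c,\hat a\rangle$ is non-degenerate with elliptic complement, so $\{a,c,\hat a\}^\perp$ is an elliptic quadric $Q^-(3,q)$ through $x$ and $x'$, which contains no line.

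So your argument yields only Case~1 of the proof of Proposition~\ref{MoufangDefn3}, i.e.\ root elations of the first kind. (In (M2) no such problem arises, because $m'\in pm$ forces $m^\perp\cap p^\perp\cap q^\perp=m'^\perp\cap p^\perp\cap q^\perp$.)

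\textbf{Two ways to close the gap.}
\begin{enumerate}
\item Bypass (M1) and verify Definition~\ref{MoufangDefn1} directly. Two apartments containing a root of the first kind differ exactly by a point with prescribed projections onto two maximal subspaces of the root meeting in $c$. That is precisely the situation your construction handles, and your (M2) step supplies the root elations of the second kind.
\item Reduce the general (M1) configuration to Case~1 by composing with the line elations from your (M2) step, as in Cases~2 and~3 of the proof of Proposition~\ref{MoufangDefn3}.
\end{enumerate}

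\textbf{The base case $n=2$.} Your treatment of $n=2$ is false as stated: thick generalized quadrangles need not be Moufang (there are non-classical finite ones), so the corollary is meant for $n\geq 3$. Start the induction at $n=3$ with Corollary~\ref{PolarMoufang}. This costs you nothing, since for $n\geq 4$ every residue you invoke has rank $n-1\geq 3$.
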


For the induction, suppose we have shown that the elations $\theta$ extend in the case where the polar space $\Delta$ has rank $n-1$. Suppose $\tilde{\Delta}$ is a polar space of rank $n$ containing $\Delta$. Let $\mathcal{A}$ be some apartment of $\tilde{\Delta}$ spanned by a polar frame
$$\{p_{-n},p_{-n+1},\ldots,p_{-1},p_1,p_2,\ldots,p_n\}.$$ 
Since we know that the extension works for $\Delta$, we know that $\theta$ is determined for all singular subspaces spanned by the points
$$\{p_{-n+1},p_{-n+2},\ldots,p_{-1},p_1,p_2,\ldots,p_{n-1}\}.$$ 
Now, with the exact same techniques as before, we can copy the action of $\theta$ to the surrounding singular subspaces.

\section{Polar spaces of infinite rank} 

We will now use Definition \ref{MoufangDefnInfty} to prove the Moufang property for polar spaces of infinite rank.
This section is the only section in which degenerate polar spaces appear. 
Hence, we explicitly talk of non-degenerate polar spaces, instead of just polar spaces.
We refer the reader interested in polar space of infinite rank to \cite{Pasini:09}.
For our proof, we will only need the following two facts.

\begin{fact}\label{everinfiniteembedd}
Every non-degenerate polar space of infinite rank is embeddable into some projective space (cf.~\cite[Theorem~5.1]{Joh:90}).
\end{fact}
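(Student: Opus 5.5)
The statement is quoted from \cite[Theorem~5.1]{Joh:90}, so in the paper it is used as a black box. If I had to prove it myself, I would use a local-to-global argument. It reduces embeddability of an infinite-rank non-degenerate polar space $\Delta$ to the finite-rank case, where Tits's classification applies. The only non-embeddable thick polar spaces of finite rank have rank~$3$: those whose planes are non-Desarguesian, together with the octonion line Grassmannian. So the first step is to show that every finite set of points $F$ of $\Delta$ lies in a non-degenerate polar subspace $\Delta_F$ of finite rank at least~$4$.

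To obtain $\Delta_F$, take a polar frame $\{p_{\pm 1},\dots,p_{\pm k}\}$ with $k$ large, which exists because the rank is infinite. I would enlarge it so that $F\subseteq\<p_{\pm1},\dots,p_{\pm k}\>_\Delta$. The natural tool is the standard ``frame completion'' trick. Given a point $x$ and a non-degenerate subspace $S$ spanned by a frame, one adds a pair of opposite points collinear to all of $S$, chosen so that $x$ lies in the span. This uses the fact that $S^\perp$ is again a non-degenerate polar space of infinite rank. The subspace generated by a finite polar frame is then a non-degenerate polar space of rank $k\ge4$, hence embeddable. It moreover has a \emph{universal} embedding $e_F\colon\Delta_F\to\mathsf{P}(V_F)$.

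Next, the finite subspaces $\Delta_F$ form a directed system under inclusion, and their union is $\Delta$. For $\Delta_F\subseteq\Delta_G$, the restriction of $e_G$ to $\Delta_F$ is an embedding of $\Delta_F$; I would take $V_{F\to G}$ to be the span of its image. By universality there is a unique semilinear map $V_F\to V_{F\to G}\subseteq V_G$ compatible with the embeddings. Uniqueness, up to the scalar normalisation, should give the cocycle condition for $F\subseteq G\subseteq H$. I would fix scalars coherently by choosing one base vector over a fixed point. All division rings involved are one and the same, namely the coordinatising division ring of any infinite-dimensional (hence Desarguesian, by Veblen--Young) singular subspace. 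Then $V:=\varinjlim V_F$ is a vector space, and $e:=\varinjlim e_F$ maps points to points of $\mathsf{P}(V)$ and lines onto full lines, since every line lies in some $\Delta_F$. It is also injective, because two distinct points lie in a common $\Delta_F$ where $e_F$ is injective.

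I expect the main obstacle to be the compatibility step. One has to check that the maps $V_F\to V_G$ induced by universality are injective and really form a directed system; a priori they could collapse the image when rank is small or the characteristic is~$2$, for instance with symplectic versus orthogonal quotients. My first attempt would avoid universal embeddings altogether. I would instead pick, for each $\Delta_F$, the \emph{unique} embedding into the projective space spanned by it. For thick polar spaces of rank at least~$3$ that are not of symplectic type in characteristic~$2$, these embeddings are determined up to projective equivalence by the classification. This gives canonical isomorphisms, and the degenerate quadric/symplectic ambiguity would be handled separately by passing to the universal cover only in that case.
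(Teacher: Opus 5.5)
The paper gives no proof of this Fact. It is imported from Johnson, and the paper's own local-finiteness statement, Lemma~\ref{existfiniteranksubpolarspace}, is \emph{deduced from} it. So your local-to-global route has to establish local finiteness independently, and the version you propose is false: subspaces generated by finite polar frames do not exhaust the finite subsets of $\Delta$ in characteristic~$2$.

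Let $\Delta=W(V)$ for a non-degenerate alternating form $B$ of infinite Witt index over a field of characteristic~$2$, and let $B(e,f)=1$. Let $v_{\pm1},\dots,v_{\pm k}$ represent a polar frame, normalised by $B(v_i,v_{-i})=1$, and let $U$ be their span. Then $Q'(\sum_i x_iv_i):=\sum_{i=1}^k x_ix_{-i}$ is a quadratic form on $U$ whose polarisation is $B|_U$. Its singular points therefore form a subspace of $\Delta$ (a hyperbolic quadric, in particular top-thin) containing the frame, hence containing the subspace the frame generates. If $\langle e\rangle$, $\langle f\rangle$, $\langle e+f\rangle$ all lay in it, we would get $0=Q'(e+f)=Q'(e)+Q'(f)+B(e,f)=1$. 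So these three points lie in no frame-generated subspace. In particular your ``frame completion'' fails for $S\ni\langle e\rangle,\langle f\rangle$ and $x=\langle e+f\rangle$: no opposite pair from $S^\perp$ can absorb $x$.

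The same obstruction occurs for quadrics in characteristic~$2$ whose bilinear form has a non-zero anisotropic radical $\langle a\rangle$, e.g.\ the infinite-rank analogue of $Q(2n,K)$. Take a hyperbolic pair $e_1,f_1$ and $Q(a)=1$, so that $\langle e_1+f_1+a\rangle$ is a singular point. The linear span of a polar frame is always non-degenerate for the bilinear form, so it never contains $e_1$, $f_1$ and $e_1+f_1+a$.

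To repair this you would have to use non-degenerate finite-rank subspaces closed under hyperbolic lines, e.g.\ $\{p_{\pm1},\dots,p_{\pm k}\}^{\perp\perp}$ rather than $\langle p_{\pm1},\dots,p_{\pm k}\rangle_\Delta$. You would then have to show synthetically, without an embedding, that these have finite rank, are non-degenerate, contain any prescribed finite set, and form a directed system under inclusion. The system should be indexed by the subspaces themselves, not by finite sets $F$ with a chosen $\Delta_F$, since $F\subseteq G$ does not force $\Delta_F\subseteq\Delta_G$. That synthetic work is the real content of the strategy, and it is missing.

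By contrast, the step you flag as the main obstacle is harmless once the pieces have universal embeddings. Non-injective transition maps do no damage, because injectivity of $\varinjlim e_F$ and fullness on lines can each be checked at a single finite stage. Normalising at a base point makes the maps compose correctly.

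You should drop the fallback via ``unique spanning embeddings'', because non-equivalent spanning embeddings occur whenever the universal embedding has a non-trivial radical. For example, for $Q(2n,K)$ in characteristic~$2$ over a non-perfect field, projection from the nucleus gives a second embedding, so this is not just a symplectic phenomenon.

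Finally, there is no ``octonion line Grassmannian''. The second family of non-embeddable rank-$3$ polar spaces is the top-thin line Grassmannian of $\mathsf{PG}(3,K)$ with $K$ non-commutative. Since your pieces are often top-thin, you need the classification in type $\mathsf{D}_k$, $k\ge4$, as well as in the thick case.
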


\begin{fact}\label{projectivesubspacepolarspaceintersection}
Let $\widehat{\Delta}$ be a projective space and let $\Delta$ be a non-degenerate polar space embedded in $\widehat{\Delta}$.
Then every subspace of $\widehat{\Delta}$ intersects $\Delta$ in a (possibly degenerate) polar subspace. 
This follows straight from the definition.
\end{fact}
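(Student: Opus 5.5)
We view the intersection as a point-line geometry and check the axioms of a possibly degenerate polar space one at a time. Concretely, let $\widehat{\Delta}$ be the projective space, $\Delta=(\cP,\cL)$ the embedded non-degenerate polar space, and $S$ a subspace of $\widehat{\Delta}$. We define $\Delta_S:=(\cP\cap S,\cL_S)$, where $\cL_S$ is the set of lines of $\Delta$ entirely contained in $S$. We interpret ``embedded'' in the usual sense: $\cP$ is a subset of the point set of $\widehat{\Delta}$, and every line of $\Delta$ is the full point set of a line of $\widehat{\Delta}$. The plan is to verify directly that $\Delta_S$ is a possibly degenerate polar space. Non-degeneracy is not claimed, which is why the statement says ``possibly degenerate''.

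The first step is a compatibility observation, which I expect to be the only point requiring care. For $x,y\in\cP\cap S$, the points $x$ and $y$ are collinear in $\Delta_S$ if and only if they are collinear in $\Delta$. The nontrivial direction goes as follows. If $x\perp y$ in $\Delta$, then the $\Delta$-line $xy$ is the projective line through $x$ and $y$. Since $S$ is a projective subspace containing two points of that line, it contains the whole line, so $xy\in\cL_S$. Consequently, the perp of $x$ computed in $\Delta_S$ equals $x^\perp\cap S$, where $x^\perp$ is taken in $\Delta$.

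The remaining axioms then reduce to the corresponding properties of $\Delta$.
\begin{enumerate}[label=(\roman*)]
\item Every line in $\cL_S$ is a line of $\Delta$, so it carries at least three points, all of which lie in $S$.
\item Fix $x\in\cP\cap S$. Since $x^\perp$ is a subspace of $\Delta$, any line of $\cL_S$ with two points in $x^\perp\cap S$ lies in $x^\perp$, and it lies in $S$ by definition of $\cL_S$. Hence $x^\perp\cap S$ is a subspace of $\Delta_S$.
\item For any $L\in\cL_S$, the line $L$ is a line of $\Delta$. Because $x^\perp$ is a hyperplane of $\Delta$, it meets $L$ in at least one point, and that point lies in $L\subseteq S$. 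So $x^\perp\cap S$ meets every line of $\Delta_S$, i.e.\ it is a geometric hyperplane of $\Delta_S$.
\end{enumerate}
This hyperplane may be all of $\cP\cap S$. That situation is exactly the degenerate case allowed by the statement, for example when $S$ is a tangent hyperplane $x^\perp$.

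The degenerate boundary cases need a brief remark. If $\cL_S$ is empty, the axioms hold vacuously. In that case $\Delta_S$ is a possibly degenerate polar space of rank at most $1$ (in the trivial sense), and I would state this convention explicitly. The main obstacle is therefore only the compatibility step above: it uses that $\Delta$-lines are full projective lines, so that collinearity and lines of $\Delta$ restrict correctly to $S$. Everything else is inherited verbatim from the polar space axioms of $\Delta$, which is why the paper calls this immediate from the definition.
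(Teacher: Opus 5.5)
Your proof is correct and is the same direct check of the definition that the paper intends when it says the fact ``follows straight from the definition'': since lines of $\Delta$ lie on projective lines, $\cP\cap S$ is a subspace of $\Delta$, and the hyperplane property of each $x^\perp$ then restricts to $S$. One small nitpick: you proved $x^{\perp}_{\Delta_S}=x^\perp\cap S$ only for points $y\neq x$, but this is harmless, because if some $x\in\cP\cap S$ lay on no line of $\cL_S$, every line of $\cL_S$ would have to meet $x^\perp\cap S\subseteq\{x\}$, forcing $\cL_S=\emptyset$, which is your vacuous case.
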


\begin{lem}\label{existfiniteranksubpolarspace}
Let $\Delta$ be a non-degenerate polar space of infinite rank. Then for every finite set of points $P$ in $\Delta$ there exists a finite-rank polar subspace $\Gamma \subset \Delta$ containing all points of $P$.
\end{lem}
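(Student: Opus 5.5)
The plan is to combine Fact~\ref{everinfiniteembedd} and Fact~\ref{projectivesubspacepolarspaceintersection}. By Fact~\ref{everinfiniteembedd}, we regard $\Delta$ as embedded in a projective space $\widehat{\Delta}$ coming from some vector space $W$. For a finite set of points $Q \supseteq P$, let $S_Q := \<Q\>_{\widehat{\Delta}}$ be the projective subspace of $\widehat{\Delta}$ spanned by $Q$. It is finite-dimensional, since it comes from a finite-dimensional subspace of $W$. By Fact~\ref{projectivesubspacepolarspaceintersection}, $\Gamma_Q := S_Q \cap \Delta$ is a possibly degenerate polar space. Its singular subspaces are singular subspaces of $\Delta$ lying in a finite-dimensional projective space, so they are finite-dimensional, and hence $\Gamma_Q$ has finite rank. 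It contains $P$. The only remaining issue is non-degeneracy, so the work is in choosing $Q$ so that $\Gamma_Q$ has trivial radical, i.e.\ no point of $\Gamma_Q$ is collinear with all points of $\Gamma_Q$.

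To achieve this, enlarge $P$ step by step. Suppose $\Gamma_Q$ has a non-empty radical $R_Q$, which is a singular subspace of $\Gamma_Q$ and therefore finite-dimensional. Since $\Delta$ is non-degenerate, for every point $r \in R_Q$ there is a point $r' \in \Delta$ opposite $r$. Choose a basis $r_1,\dots,r_k$ of $R_Q$. I will try to pick points $r_1',\dots,r_k'$ forming a ``dual'' system, with $r_i' \equiv r_i$ and $r_i' \perp r_j$ for $i \neq j$. This should be possible by working inside $\{r_j : j\neq i\}^\perp$, which is a polar space (in general possibly degenerate) in which $r_i$ is not in the radical, since $\Delta$ is non-degenerate and $r_i$ is not in the span of the others. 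Replace $Q$ by $Q \cup \{r_1',\dots,r_k'\}$. Then no point of the old radical remains in the radical of the new $\Gamma$: a point $r=\sum \lambda_i r_i$ in $R_Q$ is not collinear with $r_i'$ whenever $\lambda_i \neq 0$.

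The main obstacle is termination. Adding points can create new radical points in the larger span, so I need a monovalent quantity. I would take the dimension of $S_Q$ minus the dimension of its radical, working with the associated (pseudo)quadratic/sesquilinear form on the finite-dimensional subspace underlying $S_Q$. An alternative route would use the dimension of a maximal non-degenerate part, and simply show directly that the sequence stabilizes. If that bookkeeping proves delicate, I would fall back on a cleaner argument. Pick a hyperbolic-pair construction: take $P=\{x_1,\dots,x_m\}$, and for each $x_i$ choose an opposite point $y_i$, then build inductively a finite polar frame-like configuration whose span is non-degenerate. The radical of the span of a set containing, for each of its points, a ``partner'' witnessing opposition within the span is then trivial. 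The key step to verify is that such partners can be chosen inside the span at each stage, using Fact~\ref{projectivesubspacepolarspaceintersection} together with the non-degeneracy of $\Delta$, after which $\Gamma_Q$ is the required finite-rank non-degenerate polar subspace containing $P$.
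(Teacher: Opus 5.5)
Your setup is the paper's: embed $\Delta$ via Fact~\ref{everinfiniteembedd}, take the finite-dimensional span of $P$, intersect with $\Delta$, and repair the radical by adjoining points opposite it. Your dual system $r_1',\dots,r_k'$ plays the role of the paper's singular subspace $S$ opposite $R$. You can in fact take $r_i'$ to be the point of such an $S$ collinear with $\<r_j : j\neq i\>$, which is easier to justify than your argument via $\{r_j : j\neq i\}^\perp$.

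The gap is that you never prove the enlarged $\Gamma$ is non-degenerate. You only show that the \emph{old} radical points leave the radical, and then leave termination open. Your proposed quantity $\dim S_Q-\dim R_Q$ is not bounded as $Q$ grows, so it forces nothing. The fallback claim is false as stated: in a symplectic space with hyperbolic pairs $(e_1,f_1),(e_2,f_2)$, the set $\{e_1,f_1,e_1+e_2,f_1+e_2\}$ consists of two opposite pairs, yet its span $\<e_1,f_1,e_2\>$ has radical $\<e_2\>$. If instead ``partner'' is required for every point of the span, the claim is just a restatement of non-degeneracy.

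The missing observation is that no iteration is needed. A single enlargement already kills the radical; this is exactly the step the paper takes, though it states it without further comment. Set up the notation as follows:
\begin{itemize}
\item let $V$ be the vector space underlying $S_Q$, and $f$ the form of the embedding, so collinearity of points of $\Delta$ is $f$-orthogonality;
\item let $B=\mathrm{rad}(f|_V)$, and $N$ a complement of $B$ in $V$, so $f|_N$ is non-degenerate;
\item let $T=\<r_1',\dots,r_k'\>$.
\end{itemize}
Since $\Gamma_Q\supseteq Q$ spans $V$, every $r_i$ lies in $B$.

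Suppose $v=b+n+t$, with $b\in B$, $n\in N$, $t\in T$, lies in $\mathrm{rad}(f|_{V+T})$. Pairing with the $r_i$ gives $f(r_i,t)=0$ for all $i$. Hence $t=0$, because the matrix $(f(r_i,r_j'))$ is diagonal with nonzero diagonal entries. Pairing with $N$ then gives $n=0$, so $v\in B$. If $\<v\>$ is a point of $\Delta$, it lies in $R_Q$ and is orthogonal to every $r_i'$. Your own computation $\lambda_i f(r_i,r_i')\neq 0$ rules this out.

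The points of the new $\Gamma$ span $V+T$, so its radical consists precisely of the points of $\Delta$ in $\mathrm{rad}(f|_{V+T})$. That set is therefore empty. Your worry that new radical points might appear in the larger span is unfounded. With this computation inserted your argument is complete, and it is then the paper's proof.
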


\begin{proof}
According to Fact \ref{everinfiniteembedd}, we can embed $\Delta$ into some projective space $\widehat{\Delta}$. 
In $\widehat{\Delta}$ we consider the subspace $\<P\>$ generated by all points of $P$. 
Since $P$ is finite, $\<P\>$ has finite dimension in $\widehat{\Delta}$.
The intersection of $\<P\>$ with $\Delta$ is a polar subspace of $\Delta$ of finite rank, but possibly degenerate (Fact \ref{projectivesubspacepolarspaceintersection}).
Suppose $\<P\>_\Delta := \<P\> \cap \Delta$ is degenerate, and let $R$ be the non-empty radical of $\<P\>_\Delta$.
Let $S$ be a singular subspace opposite $R$. The subspace $S$ is finite-dimensional, because $R$ is finite-dimensional.
Now let $\Gamma$ be the polar subspace of $\Delta$ generated by $\<P\>_\Delta$ and $S$. 
Then $\Gamma$ is of finite rank and non-degenerate.
\end{proof}

\begin{lem}\label{nondegeneratesubpolarwithunion}
Let $\Delta$ be a non-degenerate polar space of infinite rank, and let $\Gamma$ and $\Gamma'$ be two distinct polar subspaces of finite rank.
Then there exists a polar subspace $\Delta'$ of finite rank containing $\Gamma \cup \Gamma'$.
\end{lem}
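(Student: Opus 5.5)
The plan is to reduce the statement to Lemma \ref{existfiniteranksubpolarspace}, applied to a suitable finite set of points. A finite-rank polar subspace is determined by finitely many points. Concretely, $\Gamma$ is the intersection of $\Delta$ with a finite-dimensional subspace of the ambient projective space $\widehat{\Delta}$ from Fact \ref{everinfiniteembedd}. Equivalently, it is the polar subspace generated by a polar frame together with finitely many further points spanning it. So there is a finite set $P_\Gamma$ of points of $\Gamma$ with $\<P_\Gamma\>_{\widehat{\Delta}} \cap \Delta \supseteq \Gamma$, and similarly a finite set $P_{\Gamma'}$ for $\Gamma'$. I would fix the embedding $\Delta \subseteq \widehat{\Delta}$ once and for all, and regard $\Gamma$ and $\Gamma'$ as sitting inside it.

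\textbf{Step 1 (finite generation).} Show that a non-degenerate polar subspace $\Gamma$ of finite rank spans a finite-dimensional subspace of $\widehat{\Delta}$. Take a polar frame $F$ of $\Gamma$; it is finite because the rank is finite. Every point $x$ of $\Gamma$ lies in $\<F\>$: the projection arguments of the Preliminaries (Proposition \ref{proposition2.10} and the apartment proposition) place $x$ in a singular subspace spanned by frame points together with points already in $\<F\>$. If this turns out to be delicate, I would instead use that $\Gamma$ is itself embedded, so $\<\Gamma\>$ is the projective span of a non-degenerate polar space of finite rank. Such a span is finite-dimensional except for pathological infinite-dimensional embeddings of quadrangles (for example, quadrics of infinite Witt index zero in a complement). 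In that case I would simply take $P_\Gamma := F$ and work with $\<F\>\cap\Delta$, which contains $F$ and is of finite rank.

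\textbf{Step 2 (apply the lemma).} Set $P := P_\Gamma \cup P_{\Gamma'}$, which is finite. Run the proof of Lemma \ref{existfiniteranksubpolarspace}: take $\<P\>_\Delta := \<P\>\cap\Delta$, which is finite rank and possibly degenerate with radical $R$. Choose a singular subspace $S$ opposite $R$, and let $\Delta'$ be the polar subspace generated by $\<P\>_\Delta$ and $S$. Then $\Delta'$ is non-degenerate of finite rank. Since $\<P\>\supseteq \<P_\Gamma\>\supseteq\Gamma$, and likewise for $\Gamma'$, we get $\Gamma\cup\Gamma' \subseteq \<P\>_\Delta \subseteq \Delta'$.

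\textbf{Main obstacle.} The main obstacle is Step 1 together with the claim, inherited from the previous lemma, that adjoining $S$ keeps the rank finite and yields non-degeneracy. For the first I would rely on the frame argument. For the second I would note that $\<\<P\>, S\>$ is again finite-dimensional in $\widehat{\Delta}$, so its trace on $\Delta$ has finite rank. If that trace is still degenerate, the radical has strictly smaller dimension than $R$, because $S$ pairs off $R$. Iterating finitely many times then terminates, since $\dim R$ is finite.
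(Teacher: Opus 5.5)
\textbf{Gap in Step~1.} Step~1 is false as stated, and the fallback does not fix it.

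The frame argument fails. In general, the projective span of a polar frame $F$ meets $\Gamma$ only in the top-thin polar space of that apartment. For example, take $\Gamma=Q(4,q)\subseteq\mathrm{PG}(4,q)$. Then $\langle F\rangle$ is a solid meeting $\Gamma$ in the grid $Q^+(3,q)$, so most points of $\Gamma$ lie outside $\langle F\rangle$. The results you cite place a point in \emph{some} apartment, not in the one spanned by your fixed $F$.

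Infinite-dimensional spans are also not confined to quadrangles. Over $\mathbb{R}$, the form $x_1y_1+\dots+x_ny_n+\sum_{i\in I}t_i^2$ with $I$ infinite defines a thick non-degenerate polar space of rank $n$ whose span is infinite-dimensional. In that case, replacing $\Gamma$ by $\langle F\rangle\cap\Delta$ abandons the conclusion $\Gamma\subseteq\Delta'$, which is the whole content of the lemma.

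\textbf{The statement fails for such $\Gamma$.} So no argument can close this gap. Let $\Delta$ be the polar space of $q=\sum_{i\ge1}x_iy_i$ over $\mathbb{R}$, on the space with basis $e_i,f_i$ ($i\ge1$). Put $H=\langle e_1,f_1,e_2,f_2\rangle$, $W=H\oplus\langle e_i+f_i : i\ge3\rangle$ and $W'=H\oplus\langle e_i-f_i : i\ge3\rangle$. Then $\Gamma=\Delta\cap W$ and $\Gamma'=\Delta\cap W'$ are non-degenerate of rank $2$, with forms $x_1y_1+x_2y_2\pm\sum t_i^2$.

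Fix $i\ge3$. The points $a_i=e_1-f_1+e_i+f_i\in\Gamma$ and $b_i=e_2+f_2+e_i-f_i\in\Gamma'$ are collinear. The point $g=e_1-f_1+e_2+f_2\in\Gamma\cap\Gamma'$ is collinear to $a_i+b_i=g+2e_i$. Hence the line through $g$ and $a_i+b_i$ contains $e_i$. So every subspace of $\Delta$ containing $\Gamma\cup\Gamma'$ contains the pairwise collinear points $e_3,e_4,\dots$, and therefore has infinite rank.

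\textbf{What is true.} The lemma holds under the extra hypothesis that $\Gamma$ and $\Gamma'$ lie in finite-dimensional subspaces of $\widehat{\Delta}$. The subspaces produced by Lemma~\ref{existfiniteranksubpolarspace} satisfy this. Under that hypothesis, $\langle\Gamma\rangle$ is spanned by finitely many points of $\Gamma$, and your Step~2 goes through.

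Step~2 is essentially the paper's argument: span $\Gamma\cup\Gamma'$, then adjoin a singular subspace opposite the radical. The paper's proof simply asserts that the subspace generated by $\Gamma\cup\Gamma'$ has finite rank. That is exactly the point your Step~1 was meant to supply, and it needs this extra hypothesis.
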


\begin{proof}
We consider the subspace $S$ of $\Delta$ generated by $\Gamma \cup \Gamma'$. 
Then $S$ is a possibly degenerate polar space of finite rank.
Suppose $S$ is degenerate, and let $R$ be the radical of $S$.
Let $T$ be a singular subspace opposite $R$, and let $\Delta'$ be the polar subspace generated by $S$ and $T$.
Then $\Delta'$ is a polar subspace of finite rank containing $\Gamma \cup \Gamma'$.
\end{proof}

\begin{theorem}\label{InfinitePolarSpacesMoufang}
Let $\Delta$ be a non-degenerate polar space of infinite rank. Then $\Delta$ is Moufang.
\end{theorem}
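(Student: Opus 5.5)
We have to verify (M1) and (M2) of Definition \ref{MoufangDefn2} for $\Delta$, and the idea is to reduce to finite rank. The desired elation is first constructed on finite-rank non-degenerate polar subspaces, where Corollary \ref{PolarRanknMoufang} applies. The pieces are then glued together using Lemmas \ref{existfiniteranksubpolarspace} and \ref{nondegeneratesubpolarwithunion}.

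For (M2), fix a line $L$, distinct points $p,q\in L$, and two points $m,m'$ on a line through $p$ not coplanar with $L$. For each finite set $F$ of points containing $L\cup\{m,m'\}$ (more precisely, a spanning set of these), Lemma \ref{existfiniteranksubpolarspace} gives a non-degenerate finite-rank polar subspace $\Gamma_F\supseteq F$. By Corollary \ref{PolarRanknMoufang} and Proposition \ref{MoufangDefn3}, $\Gamma_F$ has a line elation $\phi_F$ with center $(p,L,q)$ mapping $m$ to $m'$. We define $\phi(x):=\phi_\Gamma(x)$, where $\Gamma$ is any such subspace containing $x$.

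The main obstacle is well-definedness: the value must not depend on $\Gamma$. Given two such subspaces $\Gamma,\Gamma'$ containing $x$, Lemma \ref{nondegeneratesubpolarwithunion} gives a finite-rank $\Delta'\supseteq\Gamma\cup\Gamma'$, which we may also take non-degenerate. So it suffices to show the following compatibility: if $\Gamma\subseteq\Delta'$ are both non-degenerate of finite rank and contain $L,m,m'$, then the restriction to $\Gamma$ of the elation $\phi_{\Delta'}$ equals $\phi_\Gamma$. Two things are needed for this.

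\emph{Step 1: $\phi_{\Delta'}$ stabilizes $\Gamma$.} This must be argued from the explicit construction in Section~5, where images are produced by copying projections along lines meeting $p^\perp$ or $q^\perp$, and these stay inside any polar subspace containing the relevant points. If that fails, we instead choose $\phi_{\Delta'}$ among all elations extending $\phi_\Gamma$.

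\emph{Step 2: uniqueness.} A line elation with center $(p,L,q)$ is determined by the image of the single point $m$. This is the uniqueness part of Lemma \ref{lemma1}, together with the copying arguments of Lemma \ref{lemma3} and Proposition \ref{DefinitionOfRootElationFirstKindOfRootRank3}, extended by the induction of Corollary \ref{PolarRanknMoufang}. The expected proof is that an elation fixing $m$ fixes everything in $p^\perp\cup q^\perp$ by copying, and hence everything, since every point is determined by projections onto $p^\perp\cup q^\perp$.

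Once $\phi$ is well defined, it is a collineation: any line lies in a finite-rank subspace $\Gamma$, on which $\phi=\phi_\Gamma$ maps it onto a line. Bijectivity follows by applying the same construction with $m,m'$ interchanged, which gives the inverse. Stabilizing the singular subspaces through $p$ or $q$ is likewise checked on finite-dimensional pieces, because a singular subspace is the union of its finitely generated subspaces.

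(M1) is handled identically. The data are the point $c$, the noncoplanar lines $L,L'$ through $c$, and the points $a,\hat a$. We use the point elations from Proposition \ref{MoufangDefn3} in each finite-rank $\Gamma$ containing $L\cup L'\cup\{a,\hat a\}$. Here the uniqueness needed is that a point elation with center $(L,c,L')$ is determined by the image of one point opposite $c$, which comes from the copying argument in Lemma \ref{PolarSpaceSecond}. The gluing then proceeds as above.
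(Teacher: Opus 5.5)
Your overall strategy is the paper's. You realize the elation on finite-rank non-degenerate subspaces via Lemma~\ref{existfiniteranksubpolarspace} and Corollary~\ref{PolarRanknMoufang}, prove independence through a common finite-rank overspace from Lemma~\ref{nondegeneratesubpolarwithunion}, and check lines locally. The paper simply asserts $\theta_\Lambda|_{\Gamma}=\theta_\Gamma$. You correctly isolate the two facts this needs: that $\phi_{\Delta'}$ stabilizes $\Gamma$, and that an elation with given center is determined by the image of one point. The second is fine; it is the rigidity already used in Proposition~\ref{MoufangDefn3}, or a direct propagation argument.

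\textbf{The gap is your justification of Step~1.}

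\emph{The construction leaves $\Gamma$.} In the notation of Lemma~\ref{lemma3} (center line $dq$), the image of a point $x$ opposite $d$ and $q$ is $\eta_{p,b}(x)$. It is computed through opposite points $p,b\in\{d,q,x\}^\perp$ and the auxiliary points $j,\ell$ of Lemma~\ref{GQFirst}. If, say, $\Gamma$ has rank~$2$, then $\{d,q,x\}^\perp\cap\Gamma$ is a single point. So these points lie outside $\Gamma$, and nothing in the construction shows that the image returns to $\Gamma$.

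\emph{The fallback is circular.} Choosing $\phi_{\Delta'}$ among the elations extending $\phi_\Gamma$ presupposes that $\phi_\Gamma$ extends. The gluing would even need one elation of $\Delta'$ extending both $\phi_\Gamma$ and $\phi_{\Gamma'}$.

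\emph{The missing idea.} Step~1 holds for every elation of $\Delta'$ with the given data, directly from the definitions and the fact that $\Gamma$ is a subspace.

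For line elations:
\begin{itemize}
\item A point $z\in\Gamma\cap(p^\perp\cup q^\perp)$ moves on $pz$ or $qz$, and both lines lie in $\Gamma$.
\item If $x\in\Gamma\setminus(p^\perp\cup q^\perp)$, let $w$ be the unique point of $pm$ collinear with $x$. Then $w\neq p$. Since $pm\cap q^\perp=\{p\}$, we get $w\notin q^\perp$.
\item Hence the line $xw\subseteq\Gamma$ meets $q^\perp$ in a point $w'\neq w$. So $\phi(x)$ lies on the line through $\phi(w)\in pm$ and $\phi(w')\in qw'$, two distinct points of $\Gamma$. Thus $\phi(x)\in\Gamma$.
\end{itemize}

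For point elations:
\begin{itemize}
\item Let $x,x_0$ be collinear points of $\Gamma$ opposite $c$. The line $xx_0$ meets $c^\perp$ in a single point $z\neq c$, and $\phi(z)\in cz\subseteq\Gamma$.
\item Hence $\phi(x_0)\in\Gamma$ implies $\phi(x)\in\Gamma$.
\item Start from $\phi(a)=\hat a$ and use connectivity of the collinearity graph on the points of $\Gamma$ opposite $c$. This gives $\phi(\Gamma)\subseteq\Gamma$, with equality by applying the same argument to the inverse.
\end{itemize}

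Then $\phi_{\Delta'}|_\Gamma$ is an elation of $\Gamma$ with the right center and the right image of $m$ (resp.\ $a$). Uniqueness gives $\phi_{\Delta'}|_\Gamma=\phi_\Gamma$, which completes your argument.
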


\begin{proof}
\textbf{(Line elations)}
Let $p$ and $q$ be two collinear points in $\Delta$. Let $a$ be a point in $\Delta$ collinear to $p$ but not to $q$.
Let $a'$ be some point on the line $pa$ not equal to $p$ or $a$.
We want to show that we can find a line elation of $\Delta$ with center $(p,pq,q)$ which maps $a$ to $a'$.

Let $x$ be some arbitrary point in $\Delta$.
For the set of points $P := \{a,a',p,q,x\}$, we can find a non-degenerate polar subspace $\Gamma \subset \Delta$ of finite rank containing all points of $P$ (Lemma \ref{existfiniteranksubpolarspace}).
For $\Gamma$ there exists a line elation $\theta_\Gamma$ with center $(p,pq,q)$ which maps $a$ to $a'$ (Corollary \ref{PolarRanknMoufang}).

i) We first want to show that the image of $x$ under $\theta_\Gamma$ does not actually depend on the polar subspace $\Gamma \subset \Delta$.
For that, let $\Gamma'$ be another non-degenerate polar subspace in $\Delta$ of finite rank containing all points of $P$.
According to Lemma \ref{nondegeneratesubpolarwithunion}, there exists a non-degenerate polar subspace $\Lambda$ of finite rank containing $\Gamma \cup \Gamma'$.
Now $\theta_\Lambda|_{\Gamma}$ is $\theta_\Gamma$, and $\theta_{\Lambda}|_{\Gamma'}$ is $\theta_{\Gamma'}$, and both are restrictions of $\theta_{\Lambda}$,
and therefore, for every point $x \in \Gamma \cap \Gamma'$, we have $\theta_\Gamma(x)=\theta_{\Gamma'}(x)$. 
Thus, the image of $x$ under $\theta_\Gamma$ does not depend on the polar subspace $\Gamma$, and we just write $\theta$ in the following.

ii) The only thing left to show is that $\theta$ maps lines to lines.
For that, let $L$ be an arbitrary line in $\Delta$.
Then $L$ is contained in a non-degenerate polar subspace $\Gamma$ of $\Delta$ of finite rank.
We know that $\theta|_\Gamma(L)$ is a line, and that this image does not depend on $\Gamma \subset \Delta$.

Thus, $\theta$ maps lines to lines, is defined for every point $x \in \Delta$, has center $(p,pq,q)$, and maps $a$ to $a'$.

\textbf{(Point elations)}

For point elations we can use the exact same arguments as for line elations.
\end{proof}


\subsection*{Acknowledgment} The author is grateful to Prof. Linus Kramer for the idea for this project. Furthermore, the author wants to thank Prof. Linus Kramer and Prof. Hendrik Van Maldeghem for their general advice and support. In particular, the discussions with Prof. Hendrik Van Maldeghem about the subjects of this article have been very insightful. The author would also like to thank the anonymous referee for suggesting the definitions of point elations and line elations, and for suggesting trying to prove the infinite-rank case.

The author is funded by the Claussen-Simon-Stiftung and by the Deutsche Forschungsgemeinschaft (DFG, German Research Foundation) under Germany's Excellence Strategy EXC 2044 --390685587, Mathematics Münster: Dynamics--Geometry--Structure. This work is part of the PhD project of the author.

\end{document}